\title{Factorial-Type Recurrence Relations and $p$-adic Incomplete Gamma Functions}
\author{Paul Buckingham}
\date{}
\numberwithin{equation}{section}
\numberwithin{table}{section}
\newenvironment{proof}{\noindent\emph{Proof}.}{%
\hspace{\stretch{1}}\rule{1.5ex}{1.5ex} \vspace{5 mm}}
\newenvironment{remark}{\vspace{0.25cm} \noindent \textbf{Remark}.}{\vspace{0.25cm}}
\newtheorem{theorem}{Theorem}[section]
\newtheorem{prop}[theorem]{Proposition}
\newtheorem{lemma}[theorem]{Lemma}
\newtheorem{cor}[theorem]{Corollary}
\newtheorem{examp}[theorem]{Example}
\newcommand{\bex}{\begin{examp}\normalfont}
\newcommand{\eex}{\end{examp}}
\newcommand\goinvisible{\pgfsys@begininvisible}
\newcommand\govisible{\pgfsys@endinvisible}
\newcommand{\spwr}{T}
\NewDocumentCommand{\lball}{mmo}{B_{< #2}(#1\IfValueTF{#3}{,#3}{})}
\newcommand{\qdo}{\Lambda}
\NewDocumentCommand{\bfact}{mmo}{%
  \mathscr{F}{%
    \IfValueTF{#3}{_{#3}}{}%
    (#1,#2)%
  }%
}
\newcommand{\sumf}{\Sigma}
\newcommand{\trvar}{s}
\newcommand{\inttmap}{\mathscr{I}}
\NewDocumentCommand{\inttrans}{sm}{%
  \inttmap
  \IfBooleanTF{#1}{%
    \mathopen{}\left(#2\right)\mathclose{}%
  }{%
    (#2)%
  }%
}
\newcommand{\gfn}[1]{\gamma_{#1}}
\newcommand{\decayegf}{\Cp\pwr{\gfvar}_0}
\newcommand{\lfn}{\phi}
\newcommand{\rfn}{\psi}
\newcommand{\lan}[2]{\mathrm{la}(#1,#2)}
\newcommand{\lconst}[2]{\mathrm{lc}(#1,#2)}
\newcommand{\qfsymb}{q}
\newcommand{\qf}{\qfsymb}
\newcommand{\qfh}{\widehat{\qfsymb}}
\newcommand{\ogen}[1]{G_{#1}} 
\newcommand{\egen}[1]{G_{*,#1}} 
\newcommand{\agen}[1]{H_{#1}} 
\NewDocumentCommand{\eqfp}{O{p}}{G_{*,\qf[#1]'}}
\NewDocumentCommand{\oqfh}{O{p}}{G_{\qfh[#1]}}
\NewDocumentCommand{\Oqfh}{O{p}}{\calG_{\qfh[#1]}}
\newcommand{\shiftu}{\sigma}
\newcommand{\gfvar}{t}
\newcommand{\ra}{\rightarrow}
\newcommand{\bb}[1]{\mathbb{#1}}
\newcommand{\Z}{\bb{Z}}
\newcommand{\Q}{\bb{Q}}
\newcommand{\R}{\bb{R}}
\newcommand{\C}{\bb{C}}
\newcommand{\Zp}{\Z_p}
\newcommand{\Qp}{\Q_p}
\newcommand{\F}{\mathbb{F}}
\NewDocumentCommand{\Fp}{O{p}}{\F_{#1}}
\NewDocumentCommand{\Fpbr}{O{p}}{\br{\F}_{#1}}
\newcommand{\of}{\circ}
\newcommand{\br}[1]{\bar{#1}} 
\providecommand{\ie}{i.e., }
\newcommand{\iso}{\cong}
\newcommand{\con}{\subseteq}
\newcommand{\eps}{\epsilon}
\newcommand{\sat}{\ |\ }
\newcommand{\Cp}{\mathbb{C}_p}
\newcommand{\im}{\operatorname{Im}}
\newcommand{\calL}{\mathcal{L}}
\newcommand{\pwr}[1]{[\![#1]\!]}
\newcommand{\calG}{\mathcal{G}}
\newcommand{\ch}[2]{{#1 \choose #2}}
\newcommand{\beq}{\begin{equation}}
\newcommand{\eeq}{\end{equation}}
\newcommand{\bea}{\begin{eqnarray*}}
\newcommand{\eea}{\end{eqnarray*}}
\newcommand{\beal}{\begin{eqnarray}}
\newcommand{\eeal}{\end{eqnarray}}
\newcommand{\bcs}{\left\{\begin{array}{ll}}
\newcommand{\ecs}{\end{array}\right.}
\newcommand{\one}{\mathbf{1}}
\newcommand{\calS}{\mathcal{S}}
\newcommand{\pair}[2]{\langle#1,#2\rangle}
\newcommand{\eqcom}[1]{\quad\text{#1}}
\NewDocumentCommand{\eqcomb}{mo}{\quad\text{(#1)\IfValueTF{#2}{#2}{}}}
\NewDocumentCommand{\mtr}{om}{%
  \IfValueTF{#1}{%
    \left(\begin{array}{@{}#1@{}}#2\end{array}\right)%
    }{%
    \begin{pmatrix}#2\end{pmatrix}%
  }%
}
\NewDocumentCommand{\dx}{sO{x}}{\IfBooleanTF{#1}{}{\,}d#2}
\NewDocumentCommand{\ddx}{O{x}o}{\frac{d\IfValueTF{#2}{^{#2}}{}}{d#1\IfValueTF{#2}{^{#2}}{}}}
\NewDocumentCommand{\tddx}{O{x}}{\tfrac{d}{d#1}}
\NewDocumentCommand{\Mat}{mmo}{%
  [#1]_{%
  \IfValueTF{#3}{%
    #3\leftarrow#2%
    }{%
    #2}%
  }%
}
\NewDocumentCommand{\pd}{omm}{\frac{\partial\IfValueTF{#1}{^#1}{}#2}{\partial#3\IfValueTF{#1}{^#1}{}}}
\newcommand{\eeie}{\text{\ie}\quad}
\newcommand{\cfns}[2]{C(#1,#2)}
\NewDocumentCommand{\rzrep}{sm}{\IfBooleanTF{#1}{\left\langle#2\right\rangle}{\langle#2\rangle}}
\NewDocumentCommand{\defmap}{mmmmmo}{%
  \bea
  #1 : #2 &\ra& #3 \\*\relax
  #4 &\mapsto& #5 \IfValueTF{#6}{#6}{}
  \eea
}
\NewDocumentCommand{\givemap}{ommmmo}{%
  \bea
  \IfValueTF{#1}{#1 : }{}#2 &\ra& #3 \\
  #4 &\mapsto& #5 \IfValueTF{#6}{#6}{}
  \eea
}
\NewDocumentCommand{\vint}{O{\Zp}}{\int_{#1}}
\newcommand{\cmod}[1]{\equiv_{\,#1}}
\newcommand{\emod}[1]{=_{\,\phantom{#1}}}
\NewDocumentCommand{\ballb}{mmo}{%
  \overline{B}%
  \IfValueTF{#3}{_{#3}}{}%
  (#1,#2)%
}
\NewDocumentCommand{\ballNb}{mmo}{%
  B%
  \IfValueTF{#3}{_{#3}}{}%
  (#1,#2)%
}
\def\BibTeX{\textrm{B\kern-.05em\textsc{i\kern-.025em b}\kern-.08em T\kern-.1667em\lower.5ex\hbox{E}\kern-.125emX}} 
\NewDocumentCommand{\stirlf}{smm}{\operatorname{\mathnormal{s}}\IfBooleanTF{#1}{'}{}(#2,#3)}
\NewDocumentCommand{\stirltf}{sm}{\operatorname{St}\IfBooleanTF{#1}{_1}{}(#2)}
\NewDocumentCommand{\stirltfinv}{sm}{\operatorname{St}^{-1}\IfBooleanTF{#1}{_1}{}(#2)}
\NewDocumentCommand{\lt}{sm}{%
  \calL
  \IfBooleanTF{#1}{%
    \left\{
  }{\{}%
  #2
  \IfBooleanTF{#1}{%
    \right\}
  }{\}}%
}
\begin{document}

\maketitle

\begin{abstract}
We introduce an automorphism $\calS$ of the space $\cfns{\Zp}{\Cp}$ of continuous functions $\Zp \ra \Cp$ and show that it can be used to give an alternative construction of the $p$-adic incomplete $\Gamma$-functions recently introduced by O'Desky and Richman~\cite{o'desky-richman:incomplete-gamma}. We then describe various properties of the automorphism $\calS$, showing that it is self-adjoint with respect to a certain non-degenerate symmetric bilinear form defined in terms of $p$-adic integration, and showing that its inverse plays a role in a $p$-adic integral-transform space akin to the role of differentiation in the classical space of Laplace-transformed functions. We also derive an integral-transform formula for the $p$-adic incomplete $\Gamma$-functions.

\medskip

\noindent\textbf{Keywords:} $p$-adic incomplete Gamma functions; $p$-adic integral transform

\medskip

\noindent\textbf{Mathematics Subject Classification (2020):} 11S80
\end{abstract}

\section{Introduction}

In number theory, one is interested in finding $p$-adic analogues of classical functions of a real or complex variable. Important examples are $p$-adic $L$-functions and Morita's $p$-adic $\Gamma$-function. These $p$-adic analogues are usually defined via some interpolation property. More precisely, if $f$ is a complex function whose values at the positive integers can be viewed also as elements of the field $\Cp$, via a choice of isomorphism $\C \iso \Cp$, say, then one looks for a continuous $p$-adic function $F : \Zp \ra \Cp$ that takes the same values as $f$ on $\Z_{\geq 1}$. By the denseness of $\Z_{\geq 1}$ in $\Zp$, the continuous function $F$, if it exists, is uniquely determined. In practice, the function $f$ may need to be modified first, and the set of points at which one interpolates may be some other subset of $\Z$ dense in $\Zp$.

In a recent paper \cite{o'desky-richman:incomplete-gamma}, O'Desky and Richman construct a $p$-adic analogue of the incomplete $\Gamma$-function $\Gamma(-,r)$ for each $r \in 1 + p\Z$. Their construction uses the combinatorial machinery of derangements.

Here, we introduce an automorphism $\calS$ of the space $\cfns{\Zp}{\Cp}$ of continuous functions, restricting to an automorphism of the space $\lan{\Zp}{\Cp}$ of locally analytic functions $\Zp \ra \Cp$, and show that O'Desky and Richman's $p$-adic incomplete $\Gamma$-function $\Gamma_p(-,r)$ can be constructed by applying $\calS^{-1}$ to the locally analytic $p$-adic function $x \mapsto r^x$ ($r \in 1 + p\Z$). We thus give a construction of O'Desky and Richman's functions that does not use derangements.

We then turn to proving properties of the automorphism $\calS$. We show in Section~\ref{sec: self-adj} that it is self-adjoint with respect to a non-degenerate symmetric bilinear form on $\cfns{\Zp}{\Cp}$ defined in terms of $p$-adic integration. We also show that, with respect to a convolution product $\star$ on $\cfns{\Zp}{\Cp}$, $\calS$ again enjoys a sort of self-adjointness; see Section~\ref{sec: conv of mahl}.

Next, in Section~\ref{sec: integral transform}, we introduce an integral transform $\inttmap : \cfns{\Zp}{\Cp} \ra \lan{\Zp}{\Cp}$, again related to $p$-adic integration, describe its relationship with $\calS$, derive an integral-transform formula for the $p$-adic incomplete $\Gamma$-functions, and give a complete description of the image of $\inttmap$. We also prove a convolution property of $\inttmap$.

Finally, in Section~\ref{DE section}, we relate $\calS$ to solutions of the well-known differential equation $F' + F = G$. For example, if $G \in \Z\pwr{\gfvar}$, then the unique solution $F \in \Zp\pwr{\gfvar}$ can be constructed concretely via $\calS^{-1}$. We work in a more general setting than $\Zp$. The precise result is Proposition~\ref{DE prop}.

Running throughout is the function $\qf = \calS^{-1}(\one)$, where $\one : x \mapsto 1$. Up to a certain constant multiplicative factor and a one-unit shift in the argument, $\qf$ is the $p$-adic analogue of $\Gamma(-,1)$. It features prominently in results on the convolution product $\star$ and in the description of the image of $\inttmap$. The function $\qf$ has been well studied in various contexts (see Section~\ref{def qf} for a sample of works), and we hope our contribution will provide further evidence that it may have a role to play in $p$-adic analysis.

\subsection{Notation and definitions}

Throughout, $|\cdot|$ and $v_p$ will denote, respectively, the absolute value and the valuation on $\Cp$, normalized such that $|p| = 1/p$ and $v_p(p) = 1$. We will occasionally make reference to the balls $\lball{a}{\rho}[K] = \{x \in K \sat |x - a| < \rho\}$, where $K$ is a subfield of $\Cp$, $a \in K$, and $\rho$ is a positive real number.

Frequently used will be the functions $\beta_n : \Zp \ra \Zp$ ($n \geq 0$) defined by $\beta_n(x) = \ch{x}{n} = \frac{1}{n!}(x)_n$, where $(x)_n = x(x - 1)(x - 2) \cdots (x - n + 1)$. They satisfy $\nabla\beta_n = \beta_{n - 1}$ for all $n \geq 1$, where $\nabla$ is the forward-difference operator, \ie $(\nabla \lfn)(x) = \lfn(x + 1) - \lfn(x)$ for a function $\lfn$ on $\Zp$. By Mahler's Theorem, the continuous functions $\lfn : \Zp \ra \Cp$ are the functions that may be expressed as $\lfn = \sum_{n = 0}^\infty b_n\beta_n$ where the elements $b_n \in \Cp$ converge to zero.

If $\lfn : \Zp \ra \Cp$, we will be interested in the functions
\defmap{\widehat{\lfn}}{\Zp}{\Cp}{x}{\lfn(-1 - x)}
and
\defmap{\shiftu(\lfn)}{\Zp}{\Cp}{x}{\lfn(x + 1)}[.]
We will also refer often to the constant function $\one : \Zp \ra \Cp$, $x \mapsto 1$.

\section{The automorphism $\calS$}

Let $\cfns{\Zp}{\Cp}$ denote the $\Cp$-vector space of continuous functions $\Zp \ra \Cp$. We define a map
\[ \calS : \cfns{\Zp}{\Cp} \ra \cfns{\Zp}{\Cp} \]
as follows. If $\lfn \in \cfns{\Zp}{\Cp}$, then $\calS(\lfn)$ is the function $\Zp \ra \Cp$ defined by
\[ \calS(\lfn)(x) = \lfn(x) - x\lfn(x - 1) .\]
It is clear that $\calS(\lfn)$ is again continuous.

\begin{lemma} \label{calS of series}
Suppose that $\lfn_0,\lfn_1,\lfn_2,\ldots$ is a sequence of functions in $\cfns{\Zp}{\Cp}$ that converges uniformly to zero. Then the function $\lfn = \sum_{k = 0}^\infty \lfn_k$ is in $\cfns{\Zp}{\Cp}$, and $\calS(\lfn) = \sum_{k = 0}^\infty \calS(\lfn_k)$.
\end{lemma}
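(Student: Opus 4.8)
The plan is to verify the two assertions in turn, the second one being essentially immediate once the first is in hand.

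\textbf{Step 1: $\lfn$ is continuous.} Since $\cfns{\Zp}{\Cp}$ is a Banach space under the supremum norm (the functions $\Zp \ra \Cp$ being bounded by compactness of $\Zp$), and since $\lfn_k \ra 0$ uniformly means $\|\lfn_k\|_\infty \ra 0$, the partial sums $\sum_{k=0}^N \lfn_k$ form a Cauchy sequence in this Banach space. Indeed, for $M < N$ we have $\|\sum_{k=M+1}^N \lfn_k\|_\infty \leq \max_{M < k \leq N} \|\lfn_k\|_\infty$ by the ultrametric inequality, which tends to $0$. Hence the series converges in $\cfns{\Zp}{\Cp}$; its limit is the pointwise sum $\lfn$, which is therefore continuous.

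\textbf{Step 2: $\calS(\lfn) = \sum_k \calS(\lfn_k)$.} First note that the right-hand side makes sense: $\calS$ is $\Cp$-linear, and it is norm-nonincreasing (in fact $\|\calS(\rfn)\|_\infty \leq \|\rfn\|_\infty$, since $|\calS(\rfn)(x)| = |\rfn(x) - x\rfn(x-1)| \leq \max(|\rfn(x)|, |x|\,|\rfn(x-1)|) \leq \|\rfn\|_\infty$ using $|x| \leq 1$ for $x \in \Zp$), so $\calS(\lfn_k) \ra 0$ uniformly and $\sum_k \calS(\lfn_k)$ converges in $\cfns{\Zp}{\Cp}$ by Step 1. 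It then remains only to check the identity pointwise. Fix $x \in \Zp$. By definition,
\[
\calS(\lfn)(x) = \lfn(x) - x\lfn(x-1) = \sum_{k=0}^\infty \lfn_k(x) - x \sum_{k=0}^\infty \lfn_k(x-1) = \sum_{k=0}^\infty \big(\lfn_k(x) - x\lfn_k(x-1)\big) = \sum_{k=0}^\infty \calS(\lfn_k)(x),
\]
where the middle steps use that the two scalar series $\sum_k \lfn_k(x)$ and $\sum_k \lfn_k(x-1)$ converge in $\Cp$ (being subseries of uniformly convergent series) together with continuity of addition and scalar multiplication on $\Cp$. This is precisely the claimed equality of functions.

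There is no serious obstacle here; the only point requiring a little care is making sure the right-hand side $\sum_k \calS(\lfn_k)$ is a legitimate element of $\cfns{\Zp}{\Cp}$ before one starts manipulating it, which is why I would record the estimate $\|\calS(\rfn)\|_\infty \leq \|\rfn\|_\infty$ explicitly — it shows $\calS$ is continuous and in particular that it commutes with the (now convergent) infinite sum. Everything else is the ultrametric convergence criterion plus pointwise bookkeeping.
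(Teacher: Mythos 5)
Your proof is correct and follows essentially the same route as the paper's: continuity of $\lfn$ from uniform convergence of the partial sums, followed by the pointwise manipulation $\lfn(x) - x\lfn(x-1) = \sum_k(\lfn_k(x) - x\lfn_k(x-1))$. The extra observations you record (completeness of $\cfns{\Zp}{\Cp}$ and the bound $\|\calS(\rfn)\|_\infty \leq \|\rfn\|_\infty$) are sound and merely make explicit what the paper leaves implicit.
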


\begin{proof}
The continuity of $\lfn$ follows from the assumption of uniform convergence. For the second assertion, observe that for $x \in \Zp$,
\begin{align*}
\calS(\lfn)(x) &= \lfn(x) - x\lfn(x - 1) \\
&= \sum_{k = 0}^\infty \lfn_k(x) - x\sum_{k = 0}^\infty \lfn_k(x - 1) \\
&= \sum_{k = 0}^\infty (\lfn_k(x) - x\lfn_k(x - 1)) \\
&= \sum_{k = 0}^\infty \calS(\lfn_k)(x) .
\end{align*}
\end{proof}

\begin{prop} \label{calS of mahler}
If $\lfn = \sum_{n = 0}^\infty b_n\beta_n$ is continuous, then
\[ \calS(\lfn) = \sum_{n = 0}^\infty (b_n - nb_{n - 1})\beta_n ,\]
where we have set $b_{-1} = 0$.
\end{prop}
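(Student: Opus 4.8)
The plan is to reduce the claim to a computation of $\calS$ on a single Mahler basis function $\beta_n$, and then to reassemble the series. First, since $\lfn$ is continuous, Mahler's Theorem gives $b_n \to 0$, and because each $\beta_n$ maps $\Zp$ into $\Zp$ we have $\|b_n\beta_n\|_\infty = |b_n|$, so the functions $b_n\beta_n$ converge uniformly to zero. Lemma~\ref{calS of series}, together with the obvious $\Cp$-linearity of $\calS$, then yields
\[ \calS(\lfn) = \sum_{n=0}^\infty \calS(b_n\beta_n) = \sum_{n=0}^\infty b_n\,\calS(\beta_n) . \]

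Next I would compute $\calS(\beta_n)$ explicitly. Unwinding the definition, $\calS(\beta_n)(x) = \ch{x}{n} - x\ch{x-1}{n}$. Using $(x-1)_n = (x-1)(x-2)\cdots(x-n)$ we get $x(x-1)_n = (x)_{n+1}$, hence $x\,\beta_n(x-1) = \tfrac{1}{n!}(x)_{n+1} = (n+1)\,\beta_{n+1}(x)$, and therefore
\[ \calS(\beta_n) = \beta_n - (n+1)\beta_{n+1} . \]
(Alternatively this can be extracted from $\nabla\beta_{n+1} = \beta_n$ after rewriting $x\,\beta_n(x-1)$ in terms of $\beta_{n+1}$.)

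It then remains to substitute this into the series and reindex. For the $N$-th partial sum,
\[ \sum_{n=0}^N b_n\,\calS(\beta_n) = \sum_{n=0}^N b_n\beta_n - \sum_{n=0}^N (n+1)b_n\beta_{n+1} = \sum_{n=0}^N (b_n - nb_{n-1})\beta_n - (N+1)b_N\beta_{N+1}, \]
where $b_{-1} := 0$. Since $|N+1| \le 1$ in $\Cp$ and $\beta_{N+1}$ takes values in $\Zp$, the leftover term has sup norm at most $|b_N| \to 0$, so passing to the limit gives $\calS(\lfn) = \sum_{n=0}^\infty (b_n - nb_{n-1})\beta_n$, as claimed. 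There is no serious obstacle here; the only point to watch is that the reindexing and the vanishing of the leftover term rely on the ultrametric bound $|n+1| \le 1$, so that no convergence is lost when the two series are recombined.
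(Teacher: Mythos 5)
Your proposal is correct and follows essentially the same route as the paper: apply Lemma~\ref{calS of series} to reduce to the single computation $\calS(\beta_n) = \beta_n - (n+1)\beta_{n+1}$, then reindex. Your extra care with the leftover term $(N+1)b_N\beta_{N+1}$ and the ultrametric bound $|n+1| \le 1$ is a welcome but minor refinement of the paper's reindexing step.
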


\begin{proof}
First note that
\[ \calS(\beta_n)(x) = \ch{x}{n} - x\ch{x - 1}{n} = \ch{x}{n} - (n + 1)\ch{x}{n + 1} ,\]
so $\calS(\beta_n) = \beta_n - (n + 1)\beta_{n + 1}$. Hence,
\begin{align*}
\calS(\lfn) &= \sum_{n = 0}^\infty b_n(\beta_n - (n + 1)\beta_{n + 1}) \eqcom{by Lemma~\ref{calS of series}} \\
&= \sum_{n = 0}^\infty b_n\beta_n - \sum_{n = 1}^\infty nb_{n - 1}\beta_n \\
&= \sum_{n = 0}^\infty (b_n - nb_{n - 1})\beta_n .
\end{align*}
\end{proof}

\begin{lemma} \label{preimage of beta_k}
If $k \geq 0$, then
\[ \calS\left(\sum_{n = k}^\infty \frac{n!}{k!}\beta_n\right) = \beta_k .\]
\end{lemma}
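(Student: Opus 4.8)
The plan is to verify the identity by a direct computation using Proposition~\ref{calS of mahler}, after first checking that the function $\sum_{n = k}^\infty \frac{n!}{k!}\beta_n$ is genuinely continuous so that the proposition applies. For the continuity: the Mahler coefficients here are $c_n = n!/k!$ for $n \geq k$ and $c_n = 0$ for $n < k$, and since $|n!| \to 0$ $p$-adically (by Legendre's formula, $v_p(n!) \to \infty$), we have $c_n \to 0$ in $\Cp$, so by Mahler's Theorem the series defines a continuous function $\Zp \ra \Cp$. This is the only genuine prerequisite to check.

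Once continuity is established, I would apply Proposition~\ref{calS of mahler} to $\lfn = \sum_{n=0}^\infty c_n \beta_n$ with $c_n$ as above. The proposition gives $\calS(\lfn) = \sum_{n=0}^\infty (c_n - n c_{n-1})\beta_n$ with $c_{-1} = 0$. Now I would compute the coefficient $c_n - n c_{n-1}$ case by case: for $n < k$ both $c_n$ and $c_{n-1}$ vanish (or when $n = k-1$, $c_{k-1} = 0$ and $c_{k-2} = 0$ if $k \geq 2$, or $k = 1$ makes $c_0 = 0$, $c_{-1} = 0$), so the coefficient is $0$; for $n = k$ we get $c_k - k c_{k-1} = k!/k! - k \cdot 0 = 1$; and for $n > k$ we get $c_n - n c_{n-1} = \frac{n!}{k!} - n \cdot \frac{(n-1)!}{k!} = \frac{n!}{k!} - \frac{n!}{k!} = 0$. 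Hence $\calS(\lfn) = \beta_k$, which is exactly the claim.

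There is no real obstacle here — the computation is short and the telescoping in the coefficients is immediate. The only point requiring a moment's care is the continuity check, since Proposition~\ref{calS of mahler} is stated only for continuous $\lfn$; but this follows routinely from $v_p(n!) \to \infty$. One should also be mildly careful with the small-$k$ edge cases (e.g. $k = 0$, where the sum is $\sum_{n=0}^\infty n! \beta_n$ and the only surviving coefficient is the $n = 0$ one, giving $\beta_0 = \one$), but the case analysis above handles these uniformly by the convention $c_{-1} = 0$ and $c_n = 0$ for $n < k$.
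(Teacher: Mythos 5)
Your proposal is correct and follows essentially the same route as the paper: verify that the Mahler coefficients $n!/k!$ tend to zero so the function is continuous, then apply Proposition~\ref{calS of mahler} and observe that the coefficients telescope, leaving only the $n = k$ term. Your slightly more explicit case analysis for $n < k$ is fine but adds nothing beyond the paper's argument.
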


\begin{proof}
Let $\lfn = \sum_{n = k}^\infty \tfrac{n!}{k!}\beta_n$. Note that $\lfn$ is a well-defined function in $\cfns{\Zp}{\Cp}$, because its sequence of Mahler coefficients converges to $0$. Then by Proposition~\ref{calS of mahler},
\[ \calS(\lfn) = \beta_k + \sum_{n = k + 1}^\infty \left(\frac{n!}{k!} - n\,\frac{(n - 1)!}{k!}\right)\beta_n = \beta_k + \sum_{n = k + 1}^\infty \left(\frac{n!}{k!} - \frac{n!}{k!}\right)\beta_n = \beta_k .\]
\end{proof}

\begin{theorem} \label{calS theorem}
The map $\calS$ is an automorphism of the $\Cp$-vector space $\cfns{\Zp}{\Cp}$. If $\rfn = \sum_{k = 0}^\infty b_k\beta_k$, then
\beq \label{formula for preimage}
\calS^{-1}(\rfn) = \sum_{n = 0}^\infty \left(\sum_{k = 0}^n \frac{n!}{k!}b_k\right)\beta_n .
\eeq
\end{theorem}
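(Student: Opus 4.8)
The plan is to establish surjectivity and injectivity of $\calS$ separately, and then verify the explicit formula for $\calS^{-1}$ directly using the already-proved action of $\calS$ on Mahler expansions.

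For surjectivity, I would note that Lemma~\ref{preimage of beta_k} exhibits, for each $k \geq 0$, an explicit continuous preimage of $\beta_k$ under $\calS$. Given an arbitrary $\rfn = \sum_{k=0}^\infty b_k\beta_k \in \cfns{\Zp}{\Cp}$, the natural candidate for a preimage is $\sum_{k=0}^\infty b_k \cdot \calS^{-1}(\beta_k) = \sum_{k=0}^\infty b_k \sum_{n=k}^\infty \tfrac{n!}{k!}\beta_n$. The first thing to check is that this double sum makes sense as an element of $\cfns{\Zp}{\Cp}$: since $b_k \to 0$ and each inner function $\calS^{-1}(\beta_k)$ has Mahler coefficients $\tfrac{n!}{k!}$ (for $n \geq k$) which are in $\Zp$ (hence bounded by $1$ in absolute value), the functions $b_k\calS^{-1}(\beta_k)$ converge uniformly to zero, so Lemma~\ref{calS of series} applies. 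Then $\calS$ of this sum is $\sum_k b_k\calS(\calS^{-1}(\beta_k)) = \sum_k b_k\beta_k = \rfn$, again by Lemma~\ref{calS of series}. Rearranging the double sum by collecting the coefficient of $\beta_n$ — legitimate because for fixed $n$ only finitely many terms ($k = 0,\ldots,n$) contribute to $\beta_n$, and the overall convergence is controlled — yields exactly the claimed formula $\eqref{formula for preimage}$: the coefficient of $\beta_n$ is $\sum_{k=0}^n \tfrac{n!}{k!}b_k$.

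For injectivity, suppose $\calS(\lfn) = 0$ with $\lfn = \sum_{n=0}^\infty b_n\beta_n$. By Proposition~\ref{calS of mahler}, this forces $b_n - nb_{n-1} = 0$ for all $n \geq 0$ (with $b_{-1} = 0$), hence $b_0 = 0$ and then inductively $b_n = nb_{n-1} = 0$ for all $n$; so $\lfn = 0$. Linearity of $\calS$ is immediate from the defining formula $\calS(\lfn)(x) = \lfn(x) - x\lfn(x-1)$, so $\calS$ is a bijective linear map, i.e. an automorphism. Alternatively, once the formula $\eqref{formula for preimage}$ is in hand, one checks directly that applying Proposition~\ref{calS of mahler} to $\sum_n \big(\sum_{k=0}^n \tfrac{n!}{k!}b_k\big)\beta_n$ returns $\rfn$: the coefficient of $\beta_n$ in the image becomes $\sum_{k=0}^n \tfrac{n!}{k!}b_k - n\sum_{k=0}^{n-1}\tfrac{(n-1)!}{k!}b_k = \sum_{k=0}^n \tfrac{n!}{k!}b_k - \sum_{k=0}^{n-1}\tfrac{n!}{k!}b_k = b_n$, which simultaneously reproves surjectivity and shows that the stated map is a genuine two-sided inverse (combined with injectivity above, or by a symmetric computation).

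The only real subtlety — the "hard part," though it is mild — is the interchange of the order of summation in the double series $\sum_k b_k \sum_{n \geq k} \tfrac{n!}{k!}\beta_n$ and the justification that the resulting Mahler coefficients $\sum_{k=0}^n \tfrac{n!}{k!}b_k$ still converge to zero as $n \to \infty$, so that $\eqref{formula for preimage}$ really does define a continuous function. For the latter, since $b_k \to 0$ one has, for any $\veps > 0$, a bound $|b_k| < \veps$ for $k \geq N$; splitting $\sum_{k=0}^n \tfrac{n!}{k!}b_k$ at $k = N$ and using that $\big|\tfrac{n!}{k!}\big| \leq 1$ and that $v_p(n!/k!) \to \infty$ for $k < N$ fixed as $n \to \infty$ (as $n!/k!$ acquires arbitrarily high powers of $p$) shows the coefficient tends to $0$. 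This same estimate is what underlies the uniform convergence needed to apply Lemma~\ref{calS of series}, so the argument is self-contained given the results already established.
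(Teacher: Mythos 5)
Your proof is correct and follows essentially the same route as the paper: surjectivity is obtained from Lemma~\ref{preimage of beta_k} and Lemma~\ref{calS of series} applied to $\sum_k b_k\calS^{-1}(\beta_k)$, and the formula (\ref{formula for preimage}) comes from the same rearrangement, justified by the same kind of two-regime estimate on $|\tfrac{n!}{k!}b_k|$. The only (immaterial) difference is that you prove injectivity by inducting on the Mahler coefficients via Proposition~\ref{calS of mahler}, whereas the paper inducts on the values $\lfn(n)$ for $n \in \Z_{\geq 0}$ and concludes by density; both are equally valid.
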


\begin{proof}
For injectivity, suppose that $\calS(\lfn) = 0$, \ie $\lfn(x) = x\lfn(x - 1)$ for all $x \in \Zp$. We show by induction on $n$ that $\lfn(n) = 0$ for all $n \in \Z_{\geq 0}$, from which it follows that $\lfn = 0$ by continuity. The base case is clear, because $\lfn(0) = 0 \cdot \lfn(-1) = 0$. Now let $n \geq 0$ and assume that $\lfn(n) = 0$. Then $\lfn(n + 1) = (n + 1)\lfn(n) = 0$. This completes the induction and therefore the proof of injectivity.

For surjectivity, let $\rfn : \Zp \ra \Cp$ be any continuous function, and let $\rfn = \sum_{k = 0}^\infty b_k \beta_k$ be its Mahler series, so that $b_k \ra 0$. We already know from Lemma~\ref{preimage of beta_k} that $\calS(\lfn_k) = \beta_k$, where $\lfn_k = \sum_{n = k}^\infty \tfrac{n!}{k!}\beta_n$. Because the functions $\lfn_k$ are all maps into $\Zp$, they have sup-norm $\|\lfn_k\| \leq 1$, so the sequence of functions $b_k\lfn_k$ converges uniformly to zero. Hence, by uniform convergence, we have a well-defined, continuous function
\[ \lfn = \sum_{k = 0}^\infty b_k\lfn_k \]
from $\Zp$ to $\Cp$. Then
\begin{align*}
\calS(\lfn) &= \sum_{k = 0}^\infty \calS(b_k\lfn_k) \eqcom{by Lemma~\ref{calS of series}} \\
&= \sum_{k = 0}^\infty b_k \beta_k \\
&= \rfn .
\end{align*}

As for the formula in (\ref{formula for preimage}), if $x \in \Zp$, then
\[ \lfn(x) = \sum_{k = 0}^\infty b_k\sum_{n = k}^\infty \frac{n!}{k!}\beta_n(x) = \sum_{n = 0}^\infty \left(\sum_{k = 0}^n \frac{n!}{k!}b_k\right)\beta_n(x) ,\]
the interchanging of the summation signs being justified as follows. Fix $x \in \Zp$ and let
\[ \eta(n,k) =
\begin{cases}
\tfrac{n!}{k!}b_k\beta_n(x) & \text{if $k \leq n$,} \\
0 & \text{otherwise.}
\end{cases}
\]
Note that $|\eta(n,k)| \leq |\tfrac{n!}{k!}|\,|b_k|$. By \cite[Chap.~2, Sect.~1.2, Cor.]{robert:p-adic}, it is enough to show that, for any $\eps > 0$, there are only finitely many pairs $(n,k)$ such that $|\eta(n,k)| > \eps$. Let $\eps > 0$, then. Because $b_k \to 0$, there is $M \geq 0$ such that $|b_k| \leq \eps$ for all $k \geq M$. For each $k < M$, choose $N_k \geq 0$ such that $|\tfrac{n!}{k!}|\,|b_k| \leq \eps$ for all $n \geq N_k$, possible because $|n!/k!| \to 0$ as $n \to \infty$. Now let $N = \max(N_0,\ldots,N_{M - 1})$. If $k \geq M$, then $|\eta(n,k)| \leq |b_k| \leq \eps$. On the other hand, if $k < M$ and $n \geq N$, then either $k > n$, in which case $\eta(n,k) = 0$, or $k \leq n$, and then $|\eta(n,k)| \leq |\tfrac{n!}{k!}|\,|b_k| \leq \eps$ because $n \geq N \geq N_k$. Thus, $|\eta(n,k)| \leq \eps$ except possibly for the finitely many pairs $(n,k) \in \{0,\ldots,N - 1\} \times \{0,\ldots,M - 1\}$.
\end{proof}

\subsection{Locally analytic functions}

We use the definition of \emph{locally analytic} given in \cite[Sect.~I.4]{colmez:fonctions}. The following proposition is \cite[Cor.~I.4.8]{colmez:fonctions}. See also \cite[Sect.~10, Cor.~1]{amice:interpolation} in the case where $\Zp$ maps into $\Qp$.

\begin{prop} \label{amice's criterion}
If $f : \Zp \ra \Cp$ has Mahler coefficients $(a_n)_n$, then $f$ is locally analytic on $\Zp$ if and only if $\liminf\limits_{n \to \infty} \frac{v_p(a_n)}{n} > 0$. Equivalently, if we set $a_n' = a_n/n!$, then $f$ is locally analytic on $\Zp$ if and only if $\liminf\limits_{n \to \infty} \frac{v_p(a_n')}{n} > -\frac{1}{p - 1}$.
\end{prop}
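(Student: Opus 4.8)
The plan is to split the statement into the easy equivalence of the two formulations and the substantive characterization, and to handle the latter by passing to the Banach spaces of functions analytic on balls of a fixed radius.

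The equivalence of the two conditions is immediate from Legendre's formula $v_p(n!) = (n - s_p(n))/(p-1)$, where $s_p(n)$ is the sum of the base-$p$ digits of $n$: since $s_p(n) = O(\log n)$ we have $v_p(n!)/n \to 1/(p-1)$, so from $v_p(a_n') = v_p(a_n) - v_p(n!)$ one gets $\liminf_n v_p(a_n')/n = \liminf_n v_p(a_n)/n - 1/(p-1)$, and hence $\liminf_n v_p(a_n)/n > 0$ if and only if $\liminf_n v_p(a_n')/n > -1/(p-1)$. So it is enough to prove the first characterization.

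By compactness of $\Zp$, a function $f : \Zp \to \Cp$ is locally analytic precisely when there is an integer $h \geq 0$ such that $f$ restricts, on each coset $a + p^h\Zp$, to a power series in $(x-a)/p^h$ converging on the closed unit ball; write $A_h$ for the associated $\Cp$-Banach space, with the norm $\|\cdot\|_h$ recording the coefficients of these local expansions. The real content of the proof is to produce, for each $h$, an orthonormal Banach basis of $A_h$ adapted to the radius $p^{-h}$ — Amice's basis, obtained from the monomials $\mathbf{1}_{a+p^h\Zp}(x)\,((x-a)/p^h)^m$ by a triangular change of variables — and to compute the $p$-adic valuations of the Mahler coefficients of its members. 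These valuations are controlled by Legendre's formula for $v_p(m!)$ together with Kummer's carry-count description of $v_p\binom{a}{b}$, and the outcome is a pair of estimates showing that $f \in A_h$ is equivalent to a linear lower bound $v_p(a_n) \geq c_h\,n - C$ on the Mahler coefficients, for a constant $c_h > 0$ with $c_h \to 0$ as $h \to \infty$. Granting this, both implications follow: if $f$ is locally analytic then $f \in A_h$ for some $h$, so $f = \sum_n \gamma_n e_n$ with $\gamma_n \to 0$ in Amice's basis $(e_n)$, and assembling the Mahler expansions of the $e_n$ gives $v_p(a_n) \geq c_h n - C'$, hence $\liminf_n v_p(a_n)/n \geq c_h > 0$; conversely, if $\liminf_n v_p(a_n)/n = \delta > 0$, picking $h$ with $c_h < \delta$ and re-expanding the Mahler series $\sum_n a_n\binom{x}{n}$ in Amice's basis for $A_h$ shows the new coefficients tend to $0$, so $f \in A_h$ is locally analytic. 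The rearrangements of series involved are justified by uniform convergence, exactly as in the proof of Theorem~\ref{calS theorem}.

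The step I expect to be the main obstacle is the valuation bookkeeping for Amice's basis — equivalently, obtaining sharp two-sided estimates for the valuations of the change-of-basis coefficients between $\{\binom{x}{n}\}_n$ and the radius-$p^{-h}$ local-analytic basis. Pointwise this is delicate: on a ``generic'' ball $a + p^h\Zp$ the individual functions $\binom{x}{n}$ do not decay, so one is forced to work with whole expansions and exploit cancellation, and Kummer's and Legendre's formulas must be combined carefully enough that the constant $c_h$ produced by the two implications agrees, which is what makes the criterion an exact dichotomy rather than leaving a gap between the two thresholds. (Alternatively, one can run the argument on the dual side, identifying $\cfns{\Zp}{\Cp}$ and $\lan{\Zp}{\Cp}$ via the Amice transform with spaces of power series on, respectively, the closed and the open unit disc; the same digit-sum estimates reappear in that language.)
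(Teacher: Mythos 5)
The paper does not actually prove this proposition: it is quoted verbatim from Colmez (Cor.~I.4.8 of \emph{Fonctions d'une variable $p$-adique}), with Amice's interpolation paper as a second reference, so there is no in-paper argument to compare yours against. Your reduction between the two formulations via Legendre's formula is correct and complete, and your outline of the main characterization is indeed the standard route taken in those references: identify local analyticity with membership in some $A_h$, and convert that to a growth condition on Mahler coefficients via Amice's orthonormal basis. Two remarks, though.

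First, as a \emph{proof} the proposal has an acknowledged hole exactly where the content lies. The statement you ``grant'' --- that $f\in A_h$ is equivalent to a linear lower bound $v_p(a_n)\ge c_h n - C$ with $c_h>0$ --- is Amice's theorem that the functions $\binom{x}{n}\big/\lfloor n/p^h\rfloor!$ form an orthonormal basis of $A_h$, equivalently that $\big\|\binom{x}{n}\big\|_{A_h}=|\lfloor n/p^h\rfloor!|^{-1}$; everything else in the argument is routine. From that one gets $f\in A_h$ iff $v_p(a_n)-v_p(\lfloor n/p^h\rfloor!)\to+\infty$, and Legendre's formula turns this into the threshold $c_h=1/\big(p^h(p-1)\big)$. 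Without this computation the proposal is a correct map of the proof, not a proof. Second, your worry that the constants produced by the two implications must agree exactly is unfounded for the liminf form of the criterion: for the forward direction you need $f\in A_h\Rightarrow\liminf v_p(a_n)/n\ge c_h>0$, and for the converse you need $\liminf v_p(a_n)/n>c'_h\Rightarrow f\in A_h$ with $c'_h\to 0$; these thresholds need not coincide (they happen to, both being $1/(p^h(p-1))$, but nothing breaks if they differ). The ``exact dichotomy'' concern would only matter if one wanted to read off the precise order $h$ of local analyticity from the Mahler coefficients, which the proposition does not claim.
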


The following equivalent form of Proposition~\ref{amice's criterion} is an immediate consequence of the fact that, if $(x_n)_n$ is a sequence of real numbers, then the sequence $(\inf_{k \geq n} x_k)_n$ is monotone non-decreasing.

\begin{prop} \label{amice variant}
If $f : \Zp \ra \Cp$ has Mahler coefficients $(a_n)_n$, then $f$ is locally analytic on $\Zp$ if and only if there are $\beta > 0$ and $N \in \Z_{\geq 0}$ such that $v_p(a_n) \geq n\beta$ for all $n \geq N$. Equivalently, if we set $a_n' = a_n/n!$, then $f$ is locally analytic on $\Zp$ if and only if there are $\beta > -\frac{1}{p - 1}$ and $N \in \Z_{\geq 0}$ such that $v_p(a_n') \geq n\beta$ for all $n \geq N$.
\end{prop}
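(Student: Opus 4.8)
The plan is to derive the proposition purely formally from Proposition~\ref{amice's criterion} together with the monotonicity observation already stated, so that no new analytic content is needed. The cleanest packaging is to isolate the following elementary lemma about sequences valued in $\R \cup \{+\infty\}$: if $(x_n)_{n \geq 1}$ is such a sequence and $c \in \R$, then $\liminf_{n \to \infty} x_n > c$ if and only if there exist a real $\beta > c$ and an $N \in \Z_{\geq 1}$ with $x_n \geq \beta$ for all $n \geq N$. Granting this, I would apply it twice: once with $x_n = v_p(a_n)/n$ and $c = 0$, using the first half of Proposition~\ref{amice's criterion}, and once with $x_n = v_p(a_n')/n$ and $c = -\tfrac{1}{p-1}$, using the second half; in each case the condition ``$x_n \geq \beta$ for $n \geq N$'' is, after clearing the denominator $n \geq 1$, exactly ``$v_p(a_n) \geq n\beta$ (resp.\ $v_p(a_n') \geq n\beta$) for $n \geq N$''.

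To prove the lemma, the ``if'' direction is immediate: $x_n \geq \beta$ for all $n \geq N$ forces $\liminf_n x_n \geq \beta > c$. For ``only if'', set $y_n = \inf_{k \geq n} x_k$. By definition $\liminf_n x_n = \lim_n y_n$, and since $(y_n)_n$ is monotone non-decreasing this limit equals $\sup_n y_n$. If $\liminf_n x_n > c$, pick a real $\beta$ with $c < \beta < \liminf_n x_n$ (any $\beta > c$ if $\liminf_n x_n = +\infty$); then $\sup_n y_n > \beta$, so some $y_N > \beta$, whence $x_n \geq y_N > \beta$ for every $n \geq N$.

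Finally I would assemble the two cases. By Proposition~\ref{amice's criterion}, $f$ is locally analytic iff $\liminf_n v_p(a_n)/n > 0$; applying the lemma with $c = 0$ turns this into the existence of $\beta > 0$ and $N$ with $v_p(a_n) \geq n\beta$ for all $n \geq N$, and since the $N$ produced lies in $\Z_{\geq 1} \subseteq \Z_{\geq 0}$ this is precisely the first assertion. The second assertion follows identically from the reformulation of Proposition~\ref{amice's criterion} in terms of $a_n' = a_n/n!$, with $c = -\tfrac{1}{p-1}$. The only points requiring a word of care — and they are why the lemma is phrased for sequences in $\R \cup \{+\infty\}$ — are that $a_n$ may vanish, in which case $v_p(a_n) = +\infty$ and the relevant inequalities hold trivially, and that the single index $n = 0$ is harmless, being absorbed among the finitely many indices below $N$. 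I do not expect any real obstacle; the substance is entirely in Proposition~\ref{amice's criterion}, and this proposition merely records a form of it convenient for later use.
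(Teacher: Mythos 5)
Your proposal is correct and follows exactly the route the paper intends: the paper gives no written proof, merely remarking that the proposition is an immediate consequence of the monotonicity of $n \mapsto \inf_{k \geq n} x_k$, and your lemma on $\liminf$ (applied with $x_n = v_p(a_n)/n$, $c = 0$, and with $x_n = v_p(a_n')/n$, $c = -\tfrac{1}{p-1}$) is precisely the fleshed-out version of that remark. Your attention to the $a_n = 0$ (valuation $+\infty$) and $n = 0$ edge cases is careful and correct.
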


We use Proposition~\ref{amice variant} to prove the following.

\begin{theorem} \label{calS on la}
The map $\calS$ restricts to an automorphism of the $\Cp$-vector space $\lan{\Zp}{\Cp}$.
\end{theorem}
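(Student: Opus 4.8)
The plan is to invoke Theorem~\ref{calS theorem}, which already exhibits $\calS$ as a $\Cp$-linear bijection of $\cfns{\Zp}{\Cp}$, and thereby reduce the claim to the two containments $\calS(\lan{\Zp}{\Cp}) \con \lan{\Zp}{\Cp}$ and $\calS^{-1}(\lan{\Zp}{\Cp}) \con \lan{\Zp}{\Cp}$; these, together with the injectivity of $\calS$, immediately give that $\calS$ restricts to a bijective $\Cp$-linear self-map of $\lan{\Zp}{\Cp}$. The observation that makes both containments transparent is that, upon replacing the Mahler coefficients $a_n$ of a function by the rescaled quantities $a_n' = a_n/n!$ featuring in Proposition~\ref{amice variant}, the formulas of Proposition~\ref{calS of mahler} and Theorem~\ref{calS theorem} become strikingly simple: if $\lfn = \sum_n a_n\beta_n$, then $\calS(\lfn)$ has rescaled coefficients $a_n' - a_{n-1}'$ (with the convention $a_{-1}' = 0$), whereas $\calS^{-1}(\lfn)$ has rescaled coefficients $\sum_{k=0}^n a_k'$. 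Both identities follow at once from the cited formulas after dividing the $n$-th Mahler coefficient by $n!$. In these coordinates, then, $\calS$ acts as a backward-difference operator and $\calS^{-1}$ as partial summation.

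For the forward containment I would take $\lfn = \sum_n a_n\beta_n$ locally analytic and use Proposition~\ref{amice variant} to fix $\beta$ and $N$ with $v_p(a_n') \geq n\beta$ for $n \geq N$, where I may assume $-\tfrac{1}{p-1} < \beta < 0$ (replace $\beta$ by $\min(\beta,-\tfrac{1}{2(p-1)})$ if need be, which only weakens the inequality). Then for $n \geq N+1$ the ultrametric inequality gives $v_p(a_n' - a_{n-1}') \geq \min\bigl(v_p(a_n'),v_p(a_{n-1}')\bigr) \geq \min\bigl(n\beta,(n-1)\beta\bigr) = n\beta$, the last step using $\beta < 0$; so $\calS(\lfn)$ meets the criterion of Proposition~\ref{amice variant} (with the same $\beta$ and with $N+1$ in place of $N$) and is locally analytic.

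For the reverse containment I would take $\lfn = \sum_k a_k\beta_k$ locally analytic, write $\calS^{-1}(\lfn) = \sum_n c_n\beta_n$ so that $c_n' = \sum_{k=0}^n a_k'$, and again fix $\beta$ with $-\tfrac{1}{p-1} < \beta < 0$ and $N$ with $v_p(a_k') \geq k\beta$ for $k \geq N$. Set $C = \min_{0 \leq k < N} v_p(a_k')$, a finite real number (or $+\infty$ if $N = 0$). For any $n$ and any $0 \leq k \leq n$ one then has $v_p(a_k') \geq \min(n\beta, C)$: if $k \geq N$ then $v_p(a_k') \geq k\beta \geq n\beta$ since $\beta < 0$, while if $k < N$ then $v_p(a_k') \geq C$ by definition. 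Hence $v_p(c_n') \geq \min(n\beta, C)$; since $n\beta \to -\infty$ there is $N'$ with $\min(n\beta, C) = n\beta$ for all $n \geq N'$, so $v_p(c_n') \geq n\beta$ there, and Proposition~\ref{amice variant} shows $\calS^{-1}(\lfn)$ is locally analytic.

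The one genuinely substantive step is the change of coordinates $a_n \mapsto a_n/n!$; after it, everything reduces to the ultrametric inequality together with the fact that only finitely many coefficients lie outside the range $k \geq N$ controlled by Proposition~\ref{amice variant}. The only subtlety to keep in mind is that the exponent $\beta$ produced by that proposition may be negative, which is exactly why I normalize to $\beta < 0$ from the start: this is what forces $\min\bigl(n\beta,(n-1)\beta\bigr) = n\beta$ and $k\beta \geq n\beta$ for $k \leq n$, and it is the only place where the sign intervenes.
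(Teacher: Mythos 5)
Your proof is correct and follows essentially the same route as the paper: the substantive step (that $\calS^{-1}$ preserves local analyticity) is handled exactly as in the paper, via the identity $c_n' = \sum_{k=0}^n a_k'$ for the rescaled Mahler coefficients together with Proposition~\ref{amice variant}. Your normalization $\beta < 0$ at the outset neatly avoids the paper's case split on the sign of $\beta$, and your forward containment via Mahler coefficients is a valid (if slightly longer) substitute for the paper's one-line observation that $x \mapsto \lfn(x) - x\lfn(x-1)$ is visibly locally analytic when $\lfn$ is.
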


\begin{proof}
It is clear from the definition of $\calS$ that $\calS(\lfn) \in \lan{\Zp}{\Cp}$ if $\lfn$ is. It is also clear from the injectivity of $\calS$ as a map on $\cfns{\Zp}{\Cp}$ that it is still injective when restricted to $\lan{\Zp}{\Cp}$. It remains to show that if $\rfn : \Zp \ra \Cp$ is locally analytic, then the unique $\lfn \in \cfns{\Zp}{\Cp}$ such that $\calS(\lfn) = \rfn$ is also locally analytic.

Let the Mahler coefficients of $\rfn$ be $(b_n)_{n \geq 0}$. By Theorem~\ref{calS theorem}, the $n$th Mahler coefficient of $\lfn$ is
\[ a_n = \sum_{k = 0}^n \frac{n!}{k!}b_k .\]
Letting $a_n' = a_n/n!$ and $b_n' = b_n/n!$, we therefore have
\beq \label{a_n' and b_k'}
a_n' = \sum_{k = 0}^n b_k' .
\eeq
The assumption that $\rfn$ is locally analytic says, by Proposition~\ref{amice variant}, that there are $\beta > -\frac{1}{p - 1}$ and $N \in \Z_{\geq 0}$ such that, for all $n \geq N$,
\[ v_p(b_n') \geq n\beta .\]

Assume now that $n \geq N$, and write
\beq \label{a_n' split up}
a_n' = s + \sum_{k = N}^n b_k' ,
\eeq
where $s = \sum_{k = 0}^{N - 1} b_k'$. Let us first consider the case where $\beta \geq 0$, in which case $v_p(b_k') \geq 0$ for all $k \geq N$. In this case, $v_p(a_n') \geq \min(v_p(s),0)$, so $v_p(a_n')/n \geq \frac{1}{n}\min(v_p(s),0)$, which tends to $0$ as $n \to \infty$. Therefore, if $\alpha$ is any negative real number, $v_p(a_n') \geq n\alpha$ for $n$ sufficiently large. Thus, $\lfn$ is locally analytic by Proposition~\ref{amice variant}, because we may choose $\alpha$ such that $-1/(p - 1) < \alpha < 0$.

We may thus assume for the remainder of the proof that $\beta < 0$. Then because $v_p(s)/n \to 0$, there is $K \geq 1$ such that, if $n \geq K$, then $v_p(s)/n \geq \beta$, \ie
\[ v_p(s) \geq n\beta .\]
Further, each $b_k'$ in the sum in (\ref{a_n' split up}) has valuation greater than or equal to $k\beta$ because $k \geq N$ in the sum. But $k\beta \geq n\beta$, because $\beta < 0$ and $k \leq n$ for $k$ in the sum. Thus, the summation term in (\ref{a_n' split up}) has valuation at least $n\beta$ as well. Therefore, $v_p(a_n') \geq n\beta$ for all $n \geq \max(N,K)$, and so $\lfn$ is locally analytic by Proposition~\ref{amice variant}.
\end{proof}

\subsection{An alternative description of $\calS^{-1}(\rfn)$}

If $\rfn : \Zp \ra \Cp$ is continuous, then because $\Zp$ is compact, the same is true of $\rfn(\Zp)$, which is therefore bounded, say $|\rfn(x)| \leq M$ for all $x \in \Zp$. Consequently, for each $n \geq 0$,
\[ \sup_{x \in \Zp} |(x)_n\rfn(x - n)| = \sup_{x \in \Zp} \left|n!\,\ch{x}{n}\rfn(x - n)\right| \leq |n!|M ,\]
and the real numbers $|n!|M$ converge to $0$ as $n \to \infty$. Therefore, by uniform convergence, we have a well-defined, continuous function $\Zp \ra \Cp$ given by
\[ x \mapsto \sum_{n = 0}^\infty (x)_n\rfn(x - n) .\]

\begin{prop} \label{alternative for calS^{-1}}
If $\rfn \in \cfns{\Zp}{\Cp}$, then
\[ \calS^{-1}(\rfn)(x) = \sum_{n = 0}^\infty (x)_n\rfn(x - n) \]
for all $x \in \Zp$.
\end{prop}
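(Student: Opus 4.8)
The plan is to verify the formula by showing that the proposed right-hand side is a $\calS$-preimage of $\rfn$, and then invoking the bijectivity of $\calS$. Concretely, I would set $F(x) = \sum_{n = 0}^\infty (x)_n\rfn(x - n)$; this defines an element of $\cfns{\Zp}{\Cp}$ by the discussion immediately preceding the statement. Since $\calS$ is a bijection of $\cfns{\Zp}{\Cp}$ by Theorem~\ref{calS theorem}, it suffices to prove $\calS(F) = \rfn$, for then $F = \calS^{-1}(\rfn)$, as claimed.

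To do this I would compute $\calS(F)(x) = F(x) - xF(x - 1)$ directly. The one identity needed is the telescoping relation $x\,(x - 1)_n = (x)_{n + 1}$, valid for all $n \geq 0$ and all $x \in \Zp$ (both sides equal $x(x - 1)(x - 2)\cdots(x - n)$). With it,
\[ xF(x - 1) = \sum_{n = 0}^\infty x\,(x - 1)_n\,\rfn(x - 1 - n) = \sum_{n = 0}^\infty (x)_{n + 1}\,\rfn\big(x - (n + 1)\big) = \sum_{m = 1}^\infty (x)_m\,\rfn(x - m) . \]
Subtracting this from $F(x) = \sum_{n = 0}^\infty (x)_n\rfn(x - n)$, every term of index $\geq 1$ cancels, leaving $F(x) - xF(x - 1) = (x)_0\,\rfn(x) = \rfn(x)$. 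Hence $\calS(F) = \rfn$, and we are done.

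The only point deserving a word of justification is the shift of summation index in the last displayed equality: for fixed $x$, the terms $(x)_n\rfn(x - n)$ tend to $0$ in $\Cp$ — since $\ch{x}{n} \in \Zp$ gives $|(x)_n| \leq |n!| \to 0$ and $\rfn$ is bounded — so the series converges and one may freely split off its initial term. I do not anticipate any real obstacle here: the computation is short, and the convergence issues were already settled in the paragraph preceding the proposition. (An alternative proof would expand $\rfn$ in its Mahler series and compare with formula~(\ref{formula for preimage}), but this is less direct, as it amounts to re-deriving a combinatorial identity of the shape $\sum_{n \geq k}\tfrac{n!}{k!}\beta_n(x) = \sum_{n \geq 0}(x)_n\beta_k(x - n)$.)
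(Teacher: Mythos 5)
Your proposal is correct and is essentially identical to the paper's proof: both define $F(x) = \sum_{n=0}^\infty (x)_n\rfn(x-n)$, use the identity $x\,(x-1)_n = (x)_{n+1}$ to reindex $xF(x-1)$, and conclude by telescoping that $\calS(F) = \rfn$. The convergence remark is a welcome extra but, as you note, was already settled before the statement.
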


\begin{proof}
Let $\lfn(x) = \sum_{n = 0}^\infty (x)_n\rfn(x - n)$. Then
\begin{align*}
\calS(\lfn)(x) &= \sum_{n = 0}^\infty (x)_n\rfn(x - n) - x\sum_{n = 0}^\infty (x - 1)_n\rfn(x - 1 - n) \\
&= \sum_{n = 0}^\infty (x)_n\rfn(x - n) - \sum_{n = 0}^\infty (x)_{n + 1}\rfn(x - (n + 1)) \\
&= \sum_{n = 0}^\infty (x)_n\rfn(x - n) - \sum_{n = 1}^\infty (x)_n\rfn(x - n) \\
&= \rfn(x) .
\end{align*}
Thus, $\calS(\lfn) = \rfn$, so $\calS^{-1}(\rfn) = \lfn$.
\end{proof}

\subsection{Relationship to factorial-type recurrence relations}

Consider a first-order linear recurrence relation
\beq \label{first-order lin}
a_n = h(n)a_{n - 1} + \rfn(n) \eqcomb{$n \geq 1$}
\eeq
where $h,\rfn : \Zp \ra \Cp$ are continuous functions and the $a_n$ are in $\Cp$. Let us say that a solution $(a_n)_{n \geq 0}$ to such a recurrence relation is \emph{continuous} if there is a continuous function $\lfn : \Zp \ra \Cp$ such that $\lfn(n) = a_n$ for all $n \geq 0$.

By an \emph{automatically seeded} recurrence relation, we will mean a recurrence relation as in (\ref{first-order lin}) for which the function $h$ has a zero $n_0 \in \Z$. Note that $n_0$ may be positive, negative, or zero. The terminology is explained via the following proposition and its proof.

\begin{prop} \label{unique cont sol to auto}
If an automatically seeded recurrence relation admits a continuous solution, then the solution is unique.
\end{prop}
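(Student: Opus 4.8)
The plan is to reduce to the homogeneous case and then exploit the integer zero of $h$ to force the difference of two solutions to vanish on a dense subset of $\Zp$. So suppose the recurrence (\ref{first-order lin}) admits two continuous solutions, interpolated by continuous functions $\lfn, \widetilde\lfn : \Zp \ra \Cp$. It suffices to show $\lfn = \widetilde\lfn$, since then the two solution sequences agree, both being equal to $\lfn = \widetilde\lfn$ on $\Z_{\geq 0}$.

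The crux of the argument — and the one step that really needs the continuity hypothesis — is to upgrade the recurrence, which a priori is only a relation among the numbers $a_n$ for $n \geq 1$, to a functional equation valid on all of $\Zp$. Given a continuous solution with interpolating function $\lfn$, consider the map $x \mapsto \lfn(x) - h(x)\lfn(x - 1) - \rfn(x)$. It is continuous, being built from $\lfn$, the shift $x \mapsto \lfn(x - 1)$, and the continuous functions $h$ and $\rfn$; and it vanishes at every $n \in \Z_{\geq 1}$ precisely because $\lfn$ interpolates a solution of (\ref{first-order lin}). Since $\Z_{\geq 1}$ is dense in $\Zp$, this function vanishes identically, \ie $\lfn(x) = h(x)\lfn(x - 1) + \rfn(x)$ for \emph{all} $x \in \Zp$, and similarly for $\widetilde\lfn$. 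This is essential when the zero of $h$ is a non-positive integer, for then it is not otherwise visible to the recurrence as originally stated.

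Now set $g = \lfn - \widetilde\lfn$; subtracting the two functional equations gives $g(x) = h(x)g(x - 1)$ for all $x \in \Zp$. Let $n_0 \in \Z$ be a zero of $h$, which exists by the automatic-seeding hypothesis. Evaluating at $x = n_0$ yields $g(n_0) = h(n_0)g(n_0 - 1) = 0$, and a one-line induction then gives $g(n_0 + k) = 0$ for every integer $k \geq 0$, since $g(n_0 + k) = h(n_0 + k)g(n_0 + k - 1)$. Hence $g$ vanishes on $\{n \in \Z : n \geq n_0\}$, a dense subset of $\Zp$ (for any $x \in \Zp$ and $m \geq 1$ there is an integer $\geq n_0$ congruent to $x$ modulo $p^m$). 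By continuity $g \equiv 0$, so $\lfn = \widetilde\lfn$, which is what we wanted. (The terminology is now transparent: the integer zero $n_0$ of $h$ seeds the homogeneous recurrence $b_n = h(n)b_{n - 1}$ with the value $0$ at $n_0$, whereupon everything downstream of $n_0$ is forced to vanish.)

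I do not expect any serious obstacle beyond the density-and-continuity upgrade in the second paragraph; the remaining steps are a routine induction and an appeal to the denseness of $\Z_{\geq n_0}$ in $\Zp$.
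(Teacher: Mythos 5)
Your proof is correct and follows essentially the same route as the paper's: upgrade the recurrence to a functional equation on all of $\Zp$ by continuity and the density of $\Z_{\geq 1}$, use the zero $n_0$ of $h$ to force the value there, propagate forward by induction, and conclude by the density of $\Z_{\geq n_0}$. The only cosmetic difference is that you subtract the two functional equations and work with the homogeneous equation for $g = \lfn - \widetilde{\lfn}$, whereas the paper observes directly that both interpolating functions must equal $\rfn(n_0)$ at $n_0$; the content is identical.
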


\begin{proof}
Suppose that $(a_n)_{n \geq 0}$ is a continuous solution to $a_n = h(n)a_{n - 1} + \rfn(n)$ ($n \geq 1$), so that there is a continuous function $\lfn : \Zp \ra \Cp$ such that $\lfn(n) = a_n$ for all $n \geq 0$. Then for $n \geq 1$,
\[ \lfn(n) = a_n = h(n)a_{n - 1} + \rfn(n) = h(n)\lfn(n - 1) + \rfn(n) ,\]
so because $\lfn$, $\rfn$, and $h$ are all continuous, we in fact have $\lfn(x) = h(x)\lfn(x - 1) + \rfn(x)$ for all $x \in \Zp$. In particular,
\[ \lfn(n_0) = h(n_0)\lfn(n_0 - 1) + \rfn(n_0) = \rfn(n_0) \]
by the assumption that $h(n_0) = 0$. Hence, if $(\tilde{a}_n)_{n \geq 0}$ is another continuous solution, and if $\lfn_1(n) = \tilde{a}_n$ for all $n \geq 0$, then $\lfn(n_0) = \lfn_1(n_0)$, so $\lfn(n) = \lfn_1(n)$ for all $n \geq n_0$, and so $\lfn = \lfn_1$ by continuity. Thus, $a_n = \tilde{a}_n$ for all $n \geq 0$.
\end{proof}

In particular, the value $a_{n_0}$ in an automatically seeded recurrence relation admitting a continuous solution is determined by the recurrence relation itself, unlike in an ordinary recurrence relation, where $a_{n_0}$ may typically be chosen freely.

For us, the function $h$ of interest will be $h(x) = x$, so that the recurrence relation becomes $a_n = na_{n - 1} + \rfn(n)$. It seems reasonable to call this a \emph{factorial-type} recurrence relation, because one recovers the usual factorial recurrence relation by taking $\rfn = 0$.

Theorem~\ref{calS theorem} says that every factorial-type recurrence relation $a_n = na_{n - 1} + \rfn(n)$, where $\rfn \in \cfns{\Zp}{\Cp}$, has the unique continuous solution $a_n = \calS^{-1}(\rfn)(n)$. Thus, every time we invoke the inverse map $\calS^{-1}$ in this paper, we are essentially considering the unique continuous solution to some factorial-type recurrence relation. Although this point of view will not be emphasized, the reader may well like to keep it in mind while reading the rest of the paper.

\subsection{The function $\qf$} \label{def qf}

Central to this paper is the function $\qf = \calS^{-1}(\one)$, where $\one : \Zp \ra \Cp$ is the constant function $x \mapsto 1$. By Proposition~\ref{alternative for calS^{-1}}, $\qf(x) = \sum_{n = 0}^\infty (x)_n = \sum_{n = 0}^\infty n!\ch{x}{n}$. By definition, it satisfies $\qf(x) = x\qf(x - 1) + 1$ for all $x \in \Zp$ and therefore gives the unique continuous solution to the recurrence relation $a_n = na_{n - 1} + 1$. We note as well that it is locally analytic by Theorem~\ref{calS on la}. (See also \cite{pasol-zaharescu:sondow}, where the function is called $A$.)

This function has been studied many times already, in several contexts and with various symbols to denote it. Aside from the reference \cite{pasol-zaharescu:sondow} just mentioned, we point out just a few more: \cite[Lemma~5]{halbeisen-hungerbuhler:comb-func}, where a connection to the incomplete $\Gamma$-function $\Gamma(-,1)$ is made, and \cite{sondow-schalm:partial} and \cite{berndt-kim-zaharescu:dioph}, which consider convergents to $e$, the latter proving a conjecture of Sondow.

\subsection{$p$-adic incomplete $\Gamma$-functions} \label{sec: p-adic inc}

In \cite{o'desky-richman:incomplete-gamma}, O'Desky and Richman define a $p$-adic incomplete $\Gamma$-function $\Gamma_p(-,r)$ for each $r \in 1 + p\Zp$ via the combinatorial machinery of derangements. Their function satisfies a $p$-adic interpolation property with respect to the usual (complex) incomplete $\Gamma$-function $\Gamma(-,r)$. We give a construction using instead the invertible map $\calS : \cfns{\Zp}{\Cp} \ra \cfns{\Zp}{\Cp}$.

For each $r \in \lball{1}{1}[\Cp]$, define
\defmap{g_r}{\Zp}{\Cp}{x}{r^x}[,]
locally analytic by Proposition~\ref{amice's criterion}. Let $\gfn{r} = \calS^{-1}(g_r)$, also locally analytic by Theorem~\ref{calS on la}. Define $\tilde{p}$ to be $p$ if $p$ is odd and to be $4$ if $p = 2$, and define
\defmap{\overline{\gamma}_r}{\Zp}{\Cp}{x}{\exp(r\tilde{p})\gfn{r}(x - 1)}[,]
again obviously locally analytic, because $\gfn{r}$ is. The function $\overline{\gamma}_r$ will give us the desired interpolation property.

By definition of $\calS$ and $g_r$,
\begin{align*}
\gfn{r}(x) &= x\gfn{r}(x - 1) + r^x \eqcom{for all $x \in \Zp$,} \\
\eeie \exp(r\tilde{p})\gfn{r}(x) &= x\exp(r\tilde{p})\gfn{r}(x - 1) + r^x\exp(r\tilde{p}) \eqcom{for all $x \in \Zp$,} \\
\eeie \overline{\gamma}_r(x + 1) &= x\overline{\gamma}_r(x) + r^x\exp(r\tilde{p}) \eqcom{for all $x \in \Zp$.}
\end{align*}
In particular,
\beq \label{my p-adic inc gamma rec rel}
\overline{\gamma}_r(n + 1) = n\overline{\gamma}_r(n) + r^n\exp(r\tilde{p}) \eqcom{for all $n \in \Z_{\geq 0}$.}
\eeq
Note also that $\overline{\gamma}_r(1) = 0 \cdot \overline{\gamma}_r(0) + \exp(r\tilde{p}) = \exp(r\tilde{p})$.

Now suppose that $r \in \lball{1}{1}[\Cp] \cap \Z = 1 + p\Z$. It is well known that the ordinary (complex) incomplete $\Gamma$-function $\Gamma(-,r)$ maps $\Z_{\geq 1}$ into $\Q(e)$ and satisfies
\beq \label{inc gamma rec rel}
\Gamma(n + 1,r) = n\Gamma(n,r) + r^n e^{-r} \eqcom{for all $n \in \Z_{\geq 1}$.}
\eeq
Consider, then, the unique field map $\tau_p : \Q(e) \ra \Qp$ mapping $e^{-1}$ to $\exp(\tilde{p})$. (The map $\tau_p$ appears already in O'Desky and Richman's paper; we do not claim $\tau_p$ as our own.) Applying $\tau_p$ to both sides of (\ref{inc gamma rec rel}), we have
\[ \tau_p(\Gamma(n + 1,r)) = n\tau_p(\Gamma(n,r)) + r^n\exp(r\tilde{p}) \eqcom{for all $n \in \Z_{\geq 1}$.} \]
Thus, the values $\tau_p(\Gamma(n,r))$, $n \in \Z_{\geq 1}$, satisfy the same recurrence relation as the values $\overline{\gamma}_r(n)$, as in (\ref{my p-adic inc gamma rec rel}). Further,
\[ \overline{\gamma}_r(1) = \exp(r\tilde{p}) = \tau_p(e^{-r}) = \tau_p(\Gamma(1,r)) ,\]
so in fact $\overline{\gamma}_r(n) = \tau_p(\Gamma(n,r))$ for all $n \geq 1$. Thus, the $p$-adic locally analytic function $\overline{\gamma}_r$ interpolates the values $\tau_p(\Gamma(n,r))$ at positive integers $n$. The existence of a $p$-adic locally analytic function with this interpolation property is what O'Desky and Richman showed, but via a different method.

Of course, since $g_1 = \one$, we have $\gfn{1} = \qf$.

\begin{remark}
We point out a small discrepancy between our function $\overline{\gamma}_r$ and O'Desky and Richman's $\Gamma_p(-,r)$ in the case where $p = 2$. They define the map $\tau_p : \Q(e) \ra \Qp$ by sending $e^{-1}$ to $\exp(p)$, while our choice is to map $e^{-1}$ to $\exp(\tilde{p})$. The maps are the same when $p$ is odd, but our choice is made to ensure that the exponential series $\exp(\tilde{p})$ converges even in the case $p = 2$, where $\tilde{p} = 4$. (Note that $\exp(x)$ converges if and only if $|x| < |p|^{1/(p - 1)}$.) The exceptional nature of the prime $2$ often requires the introduction of an extra factor of $2$, especially in Iwasawa theory. See \cite[Sect.~7.2]{wash:cyc}, for example. Another option might be to employ a continuous extension of $\exp$ to $\C_2$, so that $\exp(2)$ is defined for that extension. However, there is no canonical such extension. See \cite[Chap.~5, Sect.~4.4]{robert:p-adic} for a discussion.
\end{remark}

We will provide an integral-transform formula for $\overline{\gamma}_r$ in Section~\ref{int transf for gamma}.

\subsubsection{Mahler series of $\overline{\gamma}_r(x + 1)$}

\begin{prop}
Let $r \in \lball{1}{1}[\Cp]$. Then the Mahler series of $\overline{\gamma}_r(x + 1)$ is
\[ \overline{\gamma}_r(x + 1) = \sum_{n = 0}^\infty \left(\exp(r\tilde{p})\sum_{k = 0}^n \frac{n!}{k!}(r - 1)^k\right)\ch{x}{n} .\]
\end{prop}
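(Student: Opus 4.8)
The plan is to read off the Mahler coefficients of $g_r$, feed them into the inversion formula of Theorem~\ref{calS theorem}, and then handle the bookkeeping (a scalar multiple and a unit shift in the argument) that relates $\overline{\gamma}_r(x+1)$ to $\gfn{r}$.

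The first step is to expand $g_r$ in a Mahler series. Since $r \in \lball{1}{1}[\Cp]$, we have $|r - 1| < 1$, so $(r-1)^k \to 0$, and the binomial identity $r^x = \bigl(1 + (r-1)\bigr)^x = \sum_{k = 0}^\infty \ch{x}{k}(r-1)^k$ is then precisely the Mahler series of $g_r$; in particular the $k$th Mahler coefficient of $g_r$ is $(r-1)^k$. (This also re-derives the local analyticity of $g_r$ noted after its definition, via Proposition~\ref{amice's criterion}.)

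Next I would apply Theorem~\ref{calS theorem} with $b_k = (r-1)^k$: the function $\gfn{r} = \calS^{-1}(g_r)$ then has Mahler expansion
\[ \gfn{r}(x) = \sum_{n = 0}^\infty \left(\sum_{k = 0}^n \frac{n!}{k!}(r-1)^k\right)\ch{x}{n}, \]
locally analytic again by Theorem~\ref{calS on la}. Finally, by definition $\overline{\gamma}_r(x) = \exp(r\tilde{p})\gfn{r}(x - 1)$, so $\overline{\gamma}_r(x + 1) = \exp(r\tilde{p})\gfn{r}(x)$; multiplying the displayed series termwise by the (well-defined, convergent) scalar $\exp(r\tilde{p})$ yields exactly the claimed formula. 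There is no genuine obstacle here: the one point deserving a sentence of justification is the identification of $\sum_k \ch{x}{k}(r-1)^k$ with the Mahler series of $r^x$, which is just the binomial theorem together with $|r - 1| < 1$; everything else is a single manipulation of uniformly convergent Mahler series, legitimate by Theorem~\ref{calS theorem} and Lemma~\ref{calS of series}.
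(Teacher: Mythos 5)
Your proposal is correct and follows essentially the same route as the paper: read off the Mahler coefficients $(r-1)^k$ of $g_r$, apply the inversion formula of Theorem~\ref{calS theorem} to get the Mahler series of $\gfn{r}$, and then multiply by $\exp(r\tilde{p})$ after the unit shift. The only difference is that you spell out the binomial-theorem justification for the Mahler expansion of $r^x$, which the paper takes as given.
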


\begin{proof}
Because $\gfn{r} = \calS^{-1}(g_r)$ and $g_r = \sum_{n = 0}^\infty (r - 1)^n\beta_n$, Theorem~\ref{calS theorem} shows that
\[ \gfn{r}(x) = \sum_{n = 0}^\infty \left(\sum_{k = 0}^n \frac{n!}{k!}(r - 1)^k\right)\ch{x}{n} .\]
Hence,
\[ \overline{\gamma}_r(x + 1) = \exp(r\tilde{p})\gfn{r}(x) = \sum_{n = 0}^\infty \left(\exp(r\tilde{p})\sum_{k = 0}^n \frac{n!}{k!}(r - 1)^k\right)\ch{x}{n} .\]
\end{proof}

\subsubsection{Recovering O'Desky and Richman's formula}

Since $\overline{\gamma}_r$ and $\Gamma_p(-,r)$ interpolate the same $p$-adic values at the positive integers (with the above remark about the $p = 2$ case understood), they must be the same function, and indeed, the following formula for $\overline{\gamma}$ concords with that given for $\Gamma_p(-,r)$ in \cite[Theorem~1.1]{o'desky-richman:incomplete-gamma}.

\begin{prop}
Let $r \in \lball{1}{1}[\Cp]$. Then
\[ \overline{\gamma}_r(x) = \exp(r\tilde{p})\sum_{n = 0}^\infty (x - 1)_n r^{x - 1 - n} = \exp(r\tilde{p})\sum_{n = 0}^\infty n!\,\ch{x - 1}{n}r^{x - 1 - n} .\]
\end{prop}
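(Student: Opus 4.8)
The plan is to apply the alternative description of $\calS^{-1}$ from Proposition~\ref{alternative for calS^{-1}} directly to the function $g_r$, then translate the argument by one unit and multiply through by $\exp(r\tilde p)$. First I would recall that $\gfn{r} = \calS^{-1}(g_r)$ by definition, and that $g_r$ is continuous (indeed locally analytic, by Proposition~\ref{amice's criterion}), so Proposition~\ref{alternative for calS^{-1}} is applicable. That proposition gives, for every $y \in \Zp$,
\[ \gfn{r}(y) = \calS^{-1}(g_r)(y) = \sum_{n = 0}^\infty (y)_n\, g_r(y - n) = \sum_{n = 0}^\infty (y)_n\, r^{y - n} . \]

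Next I would substitute $y = x - 1$ to obtain $\gfn{r}(x - 1) = \sum_{n = 0}^\infty (x - 1)_n r^{x - 1 - n}$, and then use the defining relation $\overline{\gamma}_r(x) = \exp(r\tilde p)\,\gfn{r}(x - 1)$ to conclude
\[ \overline{\gamma}_r(x) = \exp(r\tilde p)\sum_{n = 0}^\infty (x - 1)_n r^{x - 1 - n} . \]
The second displayed equality in the statement is then just the identity $(x - 1)_n = n!\,\ch{x - 1}{n}$ applied term by term.

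There is essentially no obstacle here: the only things to check are that the series on the right converges (which is exactly the convergence already established in the paragraph preceding Proposition~\ref{alternative for calS^{-1}}, applied to the continuous function $\rfn = g_r$, giving the uniform bound $\sup_{x}|(x)_n g_r(x-n)| \le |n!|\,M$ with $M = \sup_x|g_r(x)|$) and that $g_r$ is indeed continuous so that Proposition~\ref{alternative for calS^{-1}} applies. So the proof is a direct unwinding of the definitions together with one application of Proposition~\ref{alternative for calS^{-1}}; the only mild point worth a sentence is noting that the agreement of this formula with \cite[Theorem~1.1]{o'desky-richman:incomplete-gamma} is what justifies the claim, in the surrounding text, that $\overline{\gamma}_r = \Gamma_p(-,r)$.
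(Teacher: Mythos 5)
Your proposal is correct and is essentially identical to the paper's own proof: both apply Proposition~\ref{alternative for calS^{-1}} to $g_r$ to get $\gfn{r}(y) = \sum_{n=0}^\infty (y)_n r^{y-n}$, then substitute $y = x-1$ and multiply by $\exp(r\tilde{p})$ via the definition $\overline{\gamma}_r(x) = \exp(r\tilde{p})\gfn{r}(x-1)$. The extra remarks about convergence and continuity of $g_r$ are fine but not needed beyond what the paper already establishes.
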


\begin{proof}
Because $\gfn{r} = \calS^{-1}(g_r)$, Proposition~\ref{alternative for calS^{-1}} shows that
\[ \gfn{r}(x) = \sum_{n = 0}^\infty (x)_n r^{x - n} .\]
Therefore, since $\overline{\gamma}_r(x) = \exp(r\tilde{p})\gfn{r}(x - 1)$, the formula follows.
\end{proof}

\subsection{$1$-Lipschitz functions}

Recall that a function $\lfn : \Zp \ra \Cp$ is called $1$-Lipschitz if $|\lfn(x) - \lfn(y)| \leq |x - y|$ for all $x,y \in \Zp$. Such a function is obviously continuous.

\begin{prop} \label{lip prop}
Let $A = \{x \in \Cp \sat |x| \leq 1\}$. The map $\calS$ restricts to an automorphism of the $A$-module of $1$-Lipschitz functions $\Zp \ra A$.
\end{prop}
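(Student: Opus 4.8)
The plan is to reduce the statement to the two claims that $\calS$ and $\calS^{-1}$ each carry the set $L$ of $1$-Lipschitz functions $\Zp \ra A$ into itself. This suffices: $\calS$ is already a $\Cp$-linear (hence $A$-linear) automorphism of $\cfns{\Zp}{\Cp}$ by Theorem~\ref{calS theorem}; $L$ is an $A$-subalgebra of $\cfns{\Zp}{\Cp}$, being closed under sums and $A$-multiples, and under products because $|(\lfn\rfn)(x) - (\lfn\rfn)(y)| \le \max\bigl(|\lfn(x)|\,|\rfn(x) - \rfn(y)|,\, |\rfn(y)|\,|\lfn(x) - \lfn(y)|\bigr) \le |x - y|$; and once $\calS(L) \subseteq L$ and $\calS^{-1}(L) \subseteq L$ are known, $\calS|_L$ is automatically an $A$-linear bijection of $L$. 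The one elementary fact I would isolate at the outset is that every polynomial $P \in \Zp[x]$ is $1$-Lipschitz on $\Zp$: factoring $P(x) - P(y) = (x - y)Q(x,y)$ with $Q \in \Zp[x,y]$ gives $|P(x) - P(y)| = |x - y|\,|Q(x,y)| \le |x - y|$. In particular each $x \mapsto (x)_n$ is $1$-Lipschitz on $\Zp$, and clearly $|(x)_n| \le 1$ there.

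For $\calS$ itself: if $\lfn \in L$, then $\calS(\lfn)(x) = \lfn(x) - x\lfn(x - 1)$ lies in $A$ since $|x| \le 1$. For the Lipschitz bound I would write
\[ x\lfn(x - 1) - y\lfn(y - 1) = x\bigl(\lfn(x - 1) - \lfn(y - 1)\bigr) + (x - y)\lfn(y - 1), \]
both summands having absolute value $\le |x - y|$ (using $|x| \le 1$, $|\lfn(y - 1)| \le 1$, and $1$-Lipschitz-ness of $\lfn$); combining with $|\lfn(x) - \lfn(y)| \le |x - y|$ gives $|\calS(\lfn)(x) - \calS(\lfn)(y)| \le |x - y|$, so $\calS(\lfn) \in L$.

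For $\calS^{-1}$: I would work from the series expression $\calS^{-1}(\rfn)(x) = \sum_{n = 0}^\infty (x)_n\rfn(x - n)$ of Proposition~\ref{alternative for calS^{-1}}. Each term lies in $A$, hence so does the convergent sum since $A$ is closed. For the Lipschitz bound, fix $x, y \in \Zp$ and split the $n$-th term of $\calS^{-1}(\rfn)(x) - \calS^{-1}(\rfn)(y)$ as
\[ (x)_n\rfn(x - n) - (y)_n\rfn(y - n) = (x)_n\bigl(\rfn(x - n) - \rfn(y - n)\bigr) + \bigl((x)_n - (y)_n\bigr)\rfn(y - n), \]
where the first summand is $\le |x - y|$ because $|(x)_n| \le 1$ and $\rfn$ is $1$-Lipschitz, and the second is $\le |x - y|$ because $x \mapsto (x)_n$ is $1$-Lipschitz. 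Applying the ultrametric inequality to the convergent series (whose terms tend to $0$, as noted before Proposition~\ref{alternative for calS^{-1}}) gives $|\calS^{-1}(\rfn)(x) - \calS^{-1}(\rfn)(y)| \le |x - y|$, so $\calS^{-1}(\rfn) \in L$. Hence $\calS$ restricts to an $A$-linear, injective, surjective self-map of $L$, i.e.\ an automorphism. I do not expect a real obstacle; the only point needing a little care is that the term-by-term ultrametric estimate is applied to a genuinely convergent series, which is exactly what is supplied by the convergence discussion preceding Proposition~\ref{alternative for calS^{-1}}.
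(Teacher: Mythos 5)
Your proposal is correct. The forward direction (that $\calS$ preserves $1$-Lipschitz functions $\Zp \ra A$) is essentially identical to the paper's argument: the same three-term decomposition of $\calS(\lfn)(x) - \calS(\lfn)(y)$ and the same ultrametric bound. For the inverse direction, however, you take a genuinely different route. The paper works from the functional equation $\lfn(x) = x\lfn(x-1) + \rfn(x)$ for $\lfn = \calS^{-1}(\rfn)$: fixing $a \in \Zp$, it proves $|\lfn(x+a) - \lfn(x)| \leq |a|$ by induction on $x \in \Z_{\geq 0}$ (working modulo the relation $\alpha \equiv_a \beta \iff |\alpha - \beta| \leq |a|$) and then extends to all $x \in \Zp$ by continuity, using that the closed ball of radius $|a|$ is closed. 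You instead estimate directly from the series $\calS^{-1}(\rfn)(x) = \sum_{n=0}^\infty (x)_n\rfn(x-n)$ of Proposition~\ref{alternative for calS^{-1}}, splitting each term as $(x)_n\bigl(\rfn(x-n) - \rfn(y-n)\bigr) + \bigl((x)_n - (y)_n\bigr)\rfn(y-n)$ and invoking the elementary fact that every $P \in \Zp[x]$ is $1$-Lipschitz on $\Zp$ (via $P(x)-P(y) = (x-y)Q(x,y)$ with $Q \in \Zp[x,y]$); the term-by-term ultrametric bound then applies because the terms tend to zero, as established just before Proposition~\ref{alternative for calS^{-1}}. Both arguments are complete. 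Yours is shorter and avoids the induction-plus-density step entirely, at the cost of relying on the explicit series formula; the paper's recurrence-based induction is more in the spirit of its ``automatically seeded recurrence'' viewpoint and would survive in a setting where no closed-form series for $\calS^{-1}$ is available. Your preliminary remarks that the $1$-Lipschitz functions form an $A$-subalgebra and that bijectivity of the restriction follows formally from $\calS(L) \subseteq L$ and $\calS^{-1}(L) \subseteq L$ are correct and harmless (note that, like the paper, you are only claiming an $A$-module automorphism, since $\calS$ is not multiplicative).
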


\begin{proof}
First, suppose that $\lfn : \Zp \ra A$ is $1$-Lipschitz. It is clear from the definition of $\calS(\lfn)$ that its image is again in $A$. For the $1$-Lipschitz property of $\calS(\lfn)$, we take $x,y \in \Zp$. Then
\begin{align*}
\goinvisible &=\govisible |\calS(\lfn)(x) - \calS(\lfn)(y)| \\
&= |\lfn(x) - x\lfn(x - 1) - \lfn(y) + y\lfn(y - 1)| \\
&= |(\lfn(x) - \lfn(y)) - x(\lfn(x - 1) - \lfn(y - 1)) + (y - x)\lfn(y - 1)| \\
&\leq \max(|\lfn(x) - \lfn(y)|,|x|\,|\lfn(x - 1) - \lfn(y - 1)|,|y - x|\,|\lfn(y - 1)|) \\
&\leq \max(|x - y|,|x|\,|x - y|,|x - y|\,|\lfn(y - 1)|) \\
&\leq |x - y|
\end{align*}
by the assumption that the image of $\lfn$ is contained in $A$.

Conversely, suppose that $\rfn : \Zp \ra A$ is $1$-Lipschitz, and let $\lfn = \calS^{-1}(\rfn)$, so that $\lfn(x) = x\lfn(x - 1) + \rfn(x)$ for all $x \in \Zp$. It is clear from Proposition~\ref{alternative for calS^{-1}} that $\lfn$ has its image in $A$. As for the $1$-Lipschitz property of $\lfn$, it is equivalent to the statement that for all $a,x \in \Zp$, $|\lfn(x + a) - \lfn(x)| \leq |a|$. To that end, let us fix $a \in \Zp$ and introduce an equivalence relation $\equiv_a$ on $A$ defined by $\alpha \equiv_a \beta$ if $|\alpha - \beta| \leq |a|$. (Transitivity holds because of the ultrametric property.)

We first show by induction on $x$ that if $x \in \Z_{\geq 0}$, then $\lfn(x + a) \equiv_a \lfn(x)$. For the case $x = 0$, observe that
\begin{align*}
\lfn(a) &\emod{a} a\lfn(a - 1) + \rfn(a) \\
&\cmod{a} \rfn(a) \eqcom{because $|\lfn(a - 1)| \leq 1$} \\
&\cmod{a} \rfn(0) \eqcom{because $\rfn$ is $1$-Lipschitz} \\
&\emod{a} \lfn(0) .
\end{align*}
Now suppose that $x \in \Z_{\geq 0}$ is such that $\lfn(x + a) \equiv_a \lfn(x)$. Then
\begin{align*}
\lfn(x + 1 + a) &\emod{a} (x + 1 + a)\lfn(x + a) + \rfn(x + 1 + a) \\
&\cmod{a} (x + 1)\lfn(x + a) + \rfn(x + 1 + a) \eqcom{because $|\lfn(x + a)| \leq 1$} \\
&\cmod{a} (x + 1)\lfn(x) + \rfn(x + 1 + a) \eqcom{by the inductive hyp.\ on $x$} \\
&\cmod{a} (x + 1)\lfn(x) + \rfn(x + 1) \eqcom{because $\rfn$ is $1$-Lipschitz} \\
&\emod{a} \lfn(x + 1) .
\end{align*}

We now know that $\lfn(x + a) \equiv_a \lfn(x)$ for all $x \in \Z_{\geq 0}$, and it remains to extend this to $x \in \Zp$, which we may achieve by continuity. Specifically, the function
\bea
\Zp &\ra& A \\
x &\mapsto& \lfn(x + a) - \lfn(x)
\eea
maps $\Z_{\geq 0}$ into the set $V = \{\alpha \in A \sat |\alpha| \leq |a|\}$ and therefore, being continuous, maps $\Zp$ into $V$ as well because $V$ is closed in $A$.
\end{proof}

We apply the above to the functions $\gfn{r}$ with $r \in 1 + p\Zp$.

\begin{prop}
Suppose that $r \in \Cp$ satisfies $|r - 1| \leq (1/3)^{1/3}$. (This condition is satisfied, in particular, if $r \in 1 + p\Zp$, since $(1/3)^{1/3} > 1/p$ no matter what prime $p$ is.) Then $\gfn{r}$ is $1$-Lipschitz.
\end{prop}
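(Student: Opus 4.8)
The plan is to reduce the claim to Proposition~\ref{lip prop} applied to the function $g_r$. Recall that $\gfn{r} = \calS^{-1}(g_r)$, where $g_r(x) = r^x$, and that Proposition~\ref{lip prop} says $\calS$ restricts to an automorphism of the $A$-algebra of $1$-Lipschitz functions $\Zp \ra A$, where $A = \{x \in \Cp \sat |x| \leq 1\}$. So it suffices to show that $g_r$ is itself a $1$-Lipschitz function from $\Zp$ into $A$; Proposition~\ref{lip prop} then immediately gives that $\gfn{r} = \calS^{-1}(g_r)$ is $1$-Lipschitz.

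First I would expand $r^x$ binomially. Since $|r - 1| \leq (1/3)^{1/3} < 1$, the series $r^x = \sum_{n = 0}^\infty \ch{x}{n}(r - 1)^n$ converges for every $x \in \Zp$, because $(r - 1)^n \to 0$ and $|\ch{x}{n}| \leq 1$; moreover $|r^x - 1| \leq |r - 1| < 1$, so $|r^x| = 1$ and $g_r$ maps $\Zp$ into $A$. For the Lipschitz estimate, fix $x, y \in \Zp$, put $a = x - y$, and use $r^x = r^y r^{a}$ (multiplicativity, again read off from the series) to write
\[ |g_r(x) - g_r(y)| = |r^y|\,|r^{a} - 1| = |r^{a} - 1| = \Bigl|\sum_{n = 1}^\infty \ch{a}{n}(r - 1)^n\Bigr| \leq \max_{n \geq 1}\bigl(|\ch{a}{n}|\,|r - 1|^n\bigr). \]

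The heart of the argument is to bound each term $|\ch{a}{n}|\,|r - 1|^n$ by $|a|$. Writing $\ch{a}{n} = \tfrac{a}{n}\ch{a - 1}{n - 1}$ with $\ch{a - 1}{n - 1} \in \Zp$ gives $|\ch{a}{n}| \leq |a|/|n|$; combined with the crude bound $|n| = p^{-v_p(n)} \geq p^{-\log_p n} = 1/n$, this yields $|\ch{a}{n}| \leq n\,|a|$, so $|\ch{a}{n}|\,|r - 1|^n \leq |a|\cdot n\,c^n$ with $c = |r - 1| \leq (1/3)^{1/3}$. It then remains to check the real-variable inequality $n\,c^n \leq 1$ for all integers $n \geq 1$: the sequence $n \mapsto n\,c^n$ rises to its maximum at $n = 3$ and strictly decreases afterwards (compare consecutive terms via the ratio $(1 + 1/n)c$), and $3c^3 \leq 3 \cdot \tfrac13 = 1$ — which is precisely why the bound on $|r - 1|$ is $(1/3)^{1/3}$ and cannot be taken larger. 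Combining, $|g_r(x) - g_r(y)| \leq |a| = |x - y|$, so $g_r$ is $1$-Lipschitz and the proposition follows.

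The only genuinely delicate step is absorbing the $p$-adic denominator $|n|$ sitting inside $\ch{a}{n}$; the rest is bookkeeping. The worry there is whether the estimate $|n| \geq 1/n$ throws away too much, and the content of the proposition is exactly that it does not — provided $|r - 1| \leq (1/3)^{1/3}$.
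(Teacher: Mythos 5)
Your proof is correct and follows essentially the same route as the paper: reduce to showing $g_r$ is $1$-Lipschitz via Proposition~\ref{lip prop}, bound the Mahler expansion term by term, and verify the real-variable inequality $n|r-1|^n \leq 1$, which is exactly where the threshold $(1/3)^{1/3} = \min_{n \geq 1}(1/n)^{1/n}$ comes from. The only difference is cosmetic: the paper cites the estimate $|\ch{x}{n} - \ch{y}{n}| \leq n|x-y|$ from Robert's book, whereas you rederive an equivalent bound from scratch by first using multiplicativity of $x \mapsto r^x$ to reduce to $|r^{x-y}-1|$ and then combining $n\ch{a}{n} = a\ch{a-1}{n-1}$ with $|n| \geq 1/n$.
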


\begin{proof}
Recall that $\gfn{r} = \calS^{-1}(g_r)$ where $g_r(x) = r^x$. By Proposition~\ref{lip prop}, the statement to be proven is equivalent to the statement that $g_r$ is $1$-Lipschitz. For that, note that if $x,y \in \Zp$, and if $n \geq 0$, then
\[ \left|\ch{x}{n} - \ch{y}{n}\right| \leq n|x - y| .\]
This is a weak form of the lemma in \cite[Chap.~5, Sect.~1.5]{robert:p-adic}. Hence,
\begin{align*}
|g_r(x) - g_r(y)| &= \left|\sum_{n = 0}^\infty (r - 1)^n\left(\ch{x}{n} - \ch{y}{n}\right)\right| \\
&= \left|\sum_{n = 1}^\infty (r - 1)^n\left(\ch{x}{n} - \ch{y}{n}\right)\right| \eqcomb{the $n = 0$ term vanishes} \\
&\leq \max_{n \in \Z_{\geq 1}} |r - 1|^n n|x - y| \\
&= |x - y|\max_{n \in \Z_{\geq 1}} n|r - 1|^n .
\end{align*}
We claim that $\max_{n \in \Z_{\geq 1}} n|r - 1|^n \leq 1$. Indeed, this inequality holds if and only if $n|r - 1|^n \leq 1$ for all $n \in \Z_{\geq 1}$, if and only if $|r - 1| \leq (1/n)^{1/n}$ for all $n \in \Z_{\geq 1}$, if and only if
\[ |r - 1| \leq \min_{n \in \Z_{\geq 1}} (1/n)^{1/n} = (1/3)^{1/3} .\]
\end{proof}

\section{Generating functions}

If $\lfn \in \cfns{\Zp}{\Cp}$, let
\begin{align*}
\ogen{\lfn}(\gfvar) &= \sum_{n = 0}^\infty \lfn(n)\gfvar^n ,\\*
\egen{\lfn}(\gfvar) &= \sum_{n = 0}^\infty \lfn(n)\frac{\gfvar^n}{n!} ,
\end{align*}
the ordinary and exponential generating functions, respectively, of $(\lfn(n))_{n \geq 0}$. We also let
\[ \agen{\lfn}(\gfvar) = \ogen{\widehat{\lfn}}(-t) = \sum_{n = 0}^\infty \widehat{\lfn}(n)(-t)^n = \sum_{n = 0}^\infty (-1)^n \lfn(-1 - n)t^n .\]

\begin{prop} \label{calS and gen funcs}
Let $\lfn \in \cfns{\Zp}{\Cp}$ and $\rfn = \calS(\lfn)$. Then
\begin{align*}
\egen{\lfn}(\gfvar) &= \frac{\egen{\rfn}(\gfvar)}{1 - \gfvar} \\
\text{and}\quad \ogen{\widehat{\lfn}}'(\gfvar) + \ogen{\widehat{\lfn}}(\gfvar) &= \ogen{\widehat{\rfn}}(\gfvar) .
\end{align*}
The second equality is equivalent to $\agen{\lfn}(t) - \agen{\lfn}'(t) = \agen{\rfn}(t)$.
\end{prop}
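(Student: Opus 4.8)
The plan is to verify each of the three assertions by a direct manipulation of the power series involved, using only the defining relation $\rfn(x) = \lfn(x) - x\lfn(x - 1)$ evaluated at the relevant integer points. I would treat all identities as identities of formal power series in $\Cp\pwr{\gfvar}$ (equivalently, in the case of $\ogen{\widehat{\lfn}}$, as identities of analytic functions on the open unit disc, legitimate because the coefficients $\widehat{\lfn}(n) = \lfn(-1 - n)$ are bounded, $\lfn$ being continuous on the compact set $\Zp$); division by $1 - \gfvar$ then causes no difficulty since $1 - \gfvar$ is a unit of $\Cp\pwr{\gfvar}$, and term-by-term differentiation and reindexing of sums are valid in this setting.

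For the first equality I would substitute $\rfn(n) = \lfn(n) - n\lfn(n - 1)$, valid for every $n \geq 0$ (the term $0 \cdot \lfn(-1)$ simply vanishing when $n = 0$), into $\egen{\rfn}(\gfvar) = \sum_{n \geq 0}\rfn(n)\gfvar^n/n!$ and reindex the second sum, $\sum_{n \geq 1} n\lfn(n - 1)\gfvar^n/n! = \sum_{m \geq 0}\lfn(m)\gfvar^{m+1}/m! = \gfvar\,\egen{\lfn}(\gfvar)$; this gives $\egen{\rfn}(\gfvar) = (1 - \gfvar)\egen{\lfn}(\gfvar)$, which is the claimed formula. For the second equality I would instead compare coefficients: term-by-term differentiation gives $\ogen{\widehat{\lfn}}'(\gfvar) = \sum_{n \geq 0}(n + 1)\lfn(-2 - n)\gfvar^n$, so the $n$th coefficient of $\ogen{\widehat{\lfn}}'(\gfvar) + \ogen{\widehat{\lfn}}(\gfvar)$ is $(n + 1)\lfn(-2 - n) + \lfn(-1 - n)$, while evaluating the defining relation at $x = -1 - n$ gives $\rfn(-1 - n) = \lfn(-1 - n) + (n + 1)\lfn(-2 - n)$, the same expression; hence the two sides agree coefficientwise.

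For the final claim I would invoke the chain rule: since $\agen{\lfn}(\gfvar) = \ogen{\widehat{\lfn}}(-\gfvar)$, we have $\agen{\lfn}'(\gfvar) = -\ogen{\widehat{\lfn}}'(-\gfvar)$, so $\agen{\lfn}(\gfvar) - \agen{\lfn}'(\gfvar) = \ogen{\widehat{\lfn}}(-\gfvar) + \ogen{\widehat{\lfn}}'(-\gfvar)$, and likewise $\agen{\rfn}(\gfvar) = \ogen{\widehat{\rfn}}(-\gfvar)$; thus $\agen{\lfn}(\gfvar) - \agen{\lfn}'(\gfvar) = \agen{\rfn}(\gfvar)$ is exactly the second equality with $\gfvar$ replaced by $-\gfvar$, and since $\gfvar \mapsto -\gfvar$ is an automorphism of $\Cp\pwr{\gfvar}$ the two statements are equivalent. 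I do not anticipate any genuine obstacle; the only points needing a little care are the bookkeeping of the $n = 0$ term in the exponential-generating-function computation and fixing, at the outset, the ring of power series in which the manipulations take place, so that differentiation, reindexing, and division by $1 - \gfvar$ are all justified.
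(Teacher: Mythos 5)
Your proposal is correct and follows essentially the same route as the paper: the first identity by multiplying out $(1-\gfvar)\egen{\lfn}$ (you run the computation from $\egen{\rfn}$ instead, which is the same reindexing), and the second by matching the $n$th coefficient $(n+1)\lfn(-2-n)+\lfn(-1-n)$ against $\rfn(-1-n)$ via the defining relation at $x=-1-n$. Your explicit $\gfvar\mapsto-\gfvar$ justification of the final equivalence is a small addition the paper leaves implicit.
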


\begin{proof}
Note that the equality $\calS(\lfn) = \rfn$ means that $\lfn(x) = x\lfn(x - 1) + \rfn(x)$ for all $x \in \Zp$. Then
\begin{align*}
(1 - \gfvar)\egen{\lfn}(\gfvar) &= \sum_{n = 0}^\infty \lfn(n)\,\frac{\gfvar^n}{n!} - \sum_{n = 0}^\infty \lfn(n)\,\frac{\gfvar^{n + 1}}{n!} \\
&= \sum_{n = 0}^\infty \lfn(n)\,\frac{\gfvar^n}{n!} - \sum_{n = 1}^\infty \lfn(n - 1)\,\frac{\gfvar^n}{(n - 1)!} \\
&= \sum_{n = 0}^\infty (\lfn(n) - n\lfn(n - 1))\frac{\gfvar^n}{n!} \\
&= \sum_{n = 0}^\infty \rfn(n)\,\frac{\gfvar^n}{n!} \\
&= \egen{\rfn}(\gfvar) ,
\end{align*}
and
\begin{align*}
\ogen{\lfn}'(\gfvar) + \ogen{\lfn}(\gfvar) &= \sum_{n = 0}^\infty ((n + 1)\widehat{\lfn}(n + 1) + \widehat{\lfn}(n))\gfvar^n \\
&= \sum_{n = 0}^\infty ((n + 1)\lfn(-2 - n) + \lfn(-1 - n))\gfvar^n \\
&= \sum_{n = 0}^\infty \rfn(-1 - n)\gfvar^n \\
&= \ogen{\widehat{\rfn}}(\gfvar) .
\end{align*}
\end{proof}

For example, let us apply the proposition to the functions $\gfn{r} = \calS^{-1}(g_r)$ appearing in Section~\ref{sec: p-adic inc}.

\begin{cor}
If $r \in \lball{1}{1}[\Cp]$, then
\begin{align*}
\egen{\gfn{r}}(\gfvar) &= \frac{\exp(r\gfvar)}{1 - \gfvar} \\
\text{and}\quad \ogen{\widehat{f}_r}'(\gfvar) + \ogen{\widehat{f}_r}(\gfvar) &= \frac{1}{r - t} .
\end{align*}
\end{cor}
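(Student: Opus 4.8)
The plan is to obtain both identities by a direct application of Proposition~\ref{calS and gen funcs} with $\lfn = \gfn{r}$ and $\rfn = g_r$. Since $\gfn{r} = \calS^{-1}(g_r)$ by definition, we have $\calS(\gfn{r}) = g_r$, so the hypothesis $\rfn = \calS(\lfn)$ holds; it then remains only to evaluate the generating functions of $g_r$ and of $\widehat{g_r}$ that appear on the right-hand sides.

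For the first identity, $\egen{g_r}(\gfvar) = \sum_{n = 0}^\infty g_r(n)\,\gfvar^n/n! = \sum_{n = 0}^\infty r^n\gfvar^n/n! = \exp(r\gfvar)$, using $g_r(n) = r^n$, and the first part of Proposition~\ref{calS and gen funcs} gives $\egen{\gfn{r}}(\gfvar) = \egen{g_r}(\gfvar)/(1 - \gfvar) = \exp(r\gfvar)/(1 - \gfvar)$. For the second, note that $\widehat{g_r}(x) = g_r(-1 - x) = r^{-1 - x}$, and that $r$ is a unit because $|r - 1| < 1$ forces $|r| = 1$ by the ultrametric inequality; hence $\ogen{\widehat{g_r}}(\gfvar) = \sum_{n = 0}^\infty r^{-1 - n}\gfvar^n = r^{-1}\sum_{n = 0}^\infty (\gfvar/r)^n = 1/(r - \gfvar)$. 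The second part of Proposition~\ref{calS and gen funcs} (writing $f_r$ for $\gfn{r}$) then yields $\ogen{\widehat{f}_r}'(\gfvar) + \ogen{\widehat{f}_r}(\gfvar) = \ogen{\widehat{g_r}}(\gfvar) = 1/(r - \gfvar)$.

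There is essentially no obstacle beyond bookkeeping: the substance is entirely contained in Proposition~\ref{calS and gen funcs}, and what remains is only to recognize $\egen{g_r}$ and $\ogen{\widehat{g_r}}$ as the exponential and geometric series they manifestly are, and to keep straight which of $\gfn{r}$, $g_r$ plays the role of $\lfn$ (the $\calS^{-1}$-image) and which plays the role of $\rfn$. The one point worth flagging is that these are identities of power series in exactly the sense already in force in Proposition~\ref{calS and gen funcs} — in particular $\exp(r\gfvar)$ here denotes the series $\sum_{n \geq 0} (r\gfvar)^n/n!$ irrespective of its $p$-adic convergence — so no new analytic input is needed.
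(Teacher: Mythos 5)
Your proposal is correct and follows exactly the paper's own route: apply Proposition~\ref{calS and gen funcs} with $\lfn = \gfn{r}$, $\rfn = g_r$, and observe that $\egen{g_r}(\gfvar) = \exp(r\gfvar)$ and $\ogen{\widehat{g}_r}(\gfvar) = 1/(r - \gfvar)$. The extra details you supply (the geometric-series computation and the remark that $|r| = 1$) are just expansions of what the paper calls ``straightforward observations.''
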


\begin{proof}
The equalities follow immediately from the proposition and the straightforward observations that
\[ \egen{g_r}(\gfvar) = \exp(rt) ,\quad \ogen{\widehat{g}_r}(\gfvar) = \frac{1}{r - t} .\]
\end{proof}

\section{Self-adjointness of $\calS$} \label{sec: self-adj}

Recall that a linear functional $\mu : \cfns{\Zp}{\Cp} \ra \Cp$ is said to be \emph{bounded} if there is $C > 0$ such that $|\mu(\phi)| \leq C\|\phi\|$ for all $\phi \in \cfns{\Zp}{\Cp}$. In the following, we use the term \emph{measure} to mean a bounded linear functional $\mu : \cfns{\Zp}{\Cp} \ra \Cp$. Our approach is similar to that of \cite[Chap.~4, Sec.~1]{lang:cyc}, except that measures there are assumed to have values in $\{x \in \Cp \sat |x| \leq 1\}$. We do not impose this restriction.

If $\mu$ is a measure, then the power series $G(\gfvar) = \sum_{n = 0}^\infty \mu(\beta_n)\gfvar^n$ has bounded coefficients. (Recall that $\beta_n(x) = \ch{x}{n}$.) Conversely, a power series $G(\gfvar) = \sum_{n = 0}^\infty b_n\gfvar^n$ with bounded coefficients defines a measure $\mu$ satisfying
\[ \mu(\phi) = \sum_{n = 0}^\infty a_n b_n ,\]
where $\phi \in \cfns{\Zp}{\Cp}$ has Mahler coefficients $(a_n)_{n \geq 0}$. The measure $\mu$ associated to $G$ in this way will be denoted $\mu(G)$.

If $\mu$ is a measure and $\phi \in \cfns{\Zp}{\Cp}$, we denote $\mu(\phi)$ by $\int_{\Zp} \phi\,d\mu$.

In the special case where $\phi : x \mapsto (1 + z)^x$, where $z$ is an element of $\Cp$ with $|z| < 1$, we have
\beq \label{int (1 + z)^x}
\int_{\Zp} (1 + z)^x\,d\mu(G)(x) = G(z) .
\eeq
This is immediate, since the function $\phi$ has $n$th Mahler coefficient $a_n = z^n$, so if $G(\gfvar) = \sum_{n = 0}^\infty b_n\gfvar^n$, then
\[ \sum_{n = 0}^\infty a_n b_n = \sum_{n = 0}^\infty z^n b_n = G(z) .\]

We now define a $\Cp$-valued pairing on the $\Cp$-vector space $\cfns{\Zp}{\Cp}$ by
\[ \pair{\phi}{\psi} = \int_{\Zp} \phi\,d\mu(H_\psi) .\]

The following lemma will be used several times.

\begin{lemma} \label{a_k b_l lemma}
Let $(a_k)_{k \geq 0}$ and $(b_l)_{l \geq 0}$ be sequences in $\Cp$ that converge to zero. Then for any $\eps > 0$, there are $M,N \geq 0$ such that if either $k \geq M$ or $l \geq N$, then $|a_k b_l| \leq \eps$.
\end{lemma}

\begin{proof}
Fix $\eps > 0$. First, choose $L \geq 0$ such that $|b_l| \leq 1$ for all $l \geq L$, possible because $b_l \ra 0$ by assumption. Then, for each $l < L$, choose $M_l \geq 0$ such that $|a_k|\,|b_l| \leq \eps$ for all $k \geq M_l$, which is again possible because $a_k \ra 0$. Next, choose $\widetilde{M}$ such that $|a_k| \leq \eps$ for $k \geq \widetilde{M}$. Now let $M = \max\{\widetilde{M},M_0,\ldots,M_{L - 1}\}$. Suppose that $k \geq M$, and let $l \geq 0$ be arbitrary. If $l < L$, then because $k \geq M \geq M_l$, we have $|a_k|\,|b_l| \leq \eps$. Otherwise, if $l \geq L$, then $|b_l| \leq 1$, while $|a_k| \leq \eps$ because $k \geq \widetilde{M}$, so $|a_k|\,|b_l| \leq \eps$.

In summary, $M$ has the property that $|a_k b_l| \leq \eps$ whenever $k \geq M$, irrespective of the value of $l$. One may prove similarly the existence of an $N$ such that $|a_k b_l| \leq \eps$ whenever $l \geq N$, irrespective of the value of $k$.
\end{proof}

\begin{prop} \label{bilinear form prop}
The pairing $\pair{\cdot}{\cdot}$ is a non-degenerate symmetric bilinear form.
\end{prop}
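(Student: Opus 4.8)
The plan is to unwind the definition into an explicit formula on Mahler coefficients, from which all three properties fall out. First, if $\psi\in\cfns{\Zp}{\Cp}$ then the coefficients $(-1)^n\psi(-1-n)$ of $H_\psi(\gfvar)=\sum_{n\ge 0}(-1)^n\psi(-1-n)\gfvar^n$ are bounded in absolute value by $\|\psi\|$, which is finite since $\Zp$ is compact, so $\mu(H_\psi)$ is a genuine measure and $\pair{\cdot}{\cdot}$ is defined. If $\phi=\sum_{n\ge 0}a_n\beta_n$ is the Mahler expansion of $\phi$, then by the description of $\mu(H_\psi)$ recalled above one obtains
\[ \pair{\phi}{\psi}=\sum_{n=0}^\infty(-1)^n a_n\,\psi(-1-n). \]
Bilinearity is then immediate: the right-hand side is $\Cp$-linear in $\phi$ because the Mahler coefficients depend linearly on $\phi$ (equivalently, $\phi\mapsto\int_{\Zp}\phi\,d\mu$ is linear for a fixed measure), and it is $\Cp$-linear in $\psi$ because both $\psi\mapsto H_\psi$ and $G\mapsto\mu(G)$ are linear.

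For symmetry, I would feed the Mahler expansion $\psi=\sum_{l\ge 0}c_l\beta_l$ into the formula above, using the identity $\ch{-1-n}{l}=(-1)^l\ch{n+l}{l}$ (immediate from writing out $(-1-n)(-2-n)\cdots(-l-n)$), so that $\psi(-1-n)=\sum_{l\ge 0}(-1)^l c_l\ch{n+l}{l}$ and hence
\[ \pair{\phi}{\psi}=\sum_{n=0}^\infty\sum_{l=0}^\infty(-1)^{n+l}a_n c_l\ch{n+l}{n}. \]
Since $\ch{n+l}{n}$ is a rational integer we have $|(-1)^{n+l}a_n c_l\ch{n+l}{n}|\le|a_n|\,|c_l|$, and both $(a_n)$ and $(c_l)$ tend to $0$; by Lemma~\ref{a_k b_l lemma} together with \cite[Chap.~2, Sect.~1.2, Cor.]{robert:p-adic}, the doubly-indexed family is summable, so the iterated sum may be taken in either order. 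Because $\ch{n+l}{n}=\ch{n+l}{l}$, the displayed expression is unchanged under the simultaneous swap $n\leftrightarrow l$, $a\leftrightarrow c$, giving $\pair{\phi}{\psi}=\pair{\psi}{\phi}$.

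For non-degeneracy it suffices, by symmetry, to show that $\pair{\phi}{\psi}=0$ for all $\psi$ forces $\phi=0$; that is, $a_j=0$ for every $j\ge 0$. I would test against locally constant functions: fix $j$, pick $M$ with $p^M>j$, and let $\psi_{j,M}\in\cfns{\Zp}{\Cp}$ be the characteristic function of the ball $\{x\in\Zp:x\equiv -1-j\bmod p^M\}$. Then the formula for the pairing gives
\[ 0=\pair{\phi}{\psi_{j,M}}=\sum_{n\equiv j\,(p^M)}(-1)^n a_n=(-1)^j a_j+\sum_{i\ge 1}(-1)^{j+ip^M}a_{j+ip^M}, \]
since $p^M>j$ forces the non-negative integers congruent to $j$ modulo $p^M$ to be exactly $j,j+p^M,j+2p^M,\dots$. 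By the ultrametric inequality, $|a_j|\le\sup_{i\ge 1}|a_{j+ip^M}|\le\sup_{m\ge p^M}|a_m|$, which tends to $0$ as $M\to\infty$ because $a_m\to 0$. Hence $a_j=0$ for all $j$, so $\phi=\sum_j a_j\beta_j=0$.

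The step I expect to be the main obstacle is the last one. The natural temptation is to argue ``atomically'': the functional $\psi\mapsto\pair{\phi}{\psi}$ is the measure $\sum_n(-1)^n a_n\,\delta_{-1-n}$, so one would like to read off $a_j$ from the mass at $-1-j$. But the points $-1-n$ ($n\ge 0$) are dense in $\Zp$, so no single point carries an isolated mass, and isolating $a_j$ genuinely requires the limiting argument over the shrinking balls above. The symmetry computation, by contrast, is routine once the binomial identity and the $p$-adic double-summation criterion are invoked, and bilinearity is formal.
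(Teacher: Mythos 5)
Your proof is correct. The bilinearity and symmetry arguments are essentially identical to the paper's: the same explicit formula $\pair{\phi}{\psi}=\sum_n(-1)^n a_n\psi(-1-n)$, the same binomial identity $\ch{-1-n}{l}=(-1)^l\ch{n+l}{l}$, and the same justification of the interchange of summation via Lemma~\ref{a_k b_l lemma} and the cited criterion from Robert. Where you genuinely diverge is non-degeneracy. The paper fixes $\psi$ and tests against $\phi=\beta_n$ in the \emph{first} slot: since $\pair{\beta_n}{\psi}$ is just the coefficient of $\gfvar^n$ in $H_\psi$, namely $(-1)^n\psi(-1-n)$, one reads off $\psi(-1-n)=0$ immediately and concludes by density of $\{-1-n\}$ in $\Zp$ --- a one-line argument requiring no limit. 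You instead fix $\phi$ and test against the \emph{second} slot, which, as you correctly diagnose, cannot be done atomically; your workaround with characteristic functions of shrinking balls around $-1-j$ and the ultrametric estimate $|a_j|\le\sup_{m\ge p^M}|a_m|\to 0$ is valid and complete. By symmetry either slot suffices, so both routes work; the paper's choice of slot simply makes the extraction trivial, while yours costs an extra limiting argument but has the small virtue of exhibiting explicit locally constant test functions that separate the Mahler coefficients.
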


\begin{proof}
Bilinearity is clear. For symmetry, we observe that if $l$ is a non-negative integer, then $\ch{-1 - x}{l} = (-1)^l\ch{x + l}{l}$. Now, if $\phi,\psi \in \cfns{\Zp}{\Cp}$ have Mahler series
\[ \phi(x) = \sum_{k = 0}^\infty a_k\ch{x}{k} ,\quad \psi(x) = \sum_{l = 0}^\infty b_l\ch{x}{l} ,\]
then
\[ \agen{\psi}(\gfvar) = \sum_{k = 0}^\infty (-1)^k\psi(-1 - k)\gfvar^k = \sum_{k = 0}^\infty (-1)^k\sum_{l = 0}^\infty b_l\ch{-1 - k}{l}\gfvar^k ,\]
so
\begin{align}
\pair{\phi}{\psi} &= \int_{\Zp} \phi\,d\mu(H_\psi) \label{symm: step 1} \\
&= \sum_{k = 0}^\infty a_k(-1)^k\sum_{l = 0}^\infty b_l\ch{-1 - k}{l} \\
&= \sum_{k = 0}^\infty \sum_{l = 0}^\infty (-1)^{k + l}a_k b_l\ch{k + l}{l} \label{symm: step 3} \\
&= \sum_{k = 0}^\infty \sum_{l = 0}^\infty (-1)^{k + l}a_k b_l\ch{k + l}{k} \nonumber \\
&= \sum_{l = 0}^\infty \sum_{k = 0}^\infty (-1)^{k + l}a_k b_l\ch{k + l}{k} \eqcomb{see below} \label{symm: interchange step} \\
&= \sum_{l = 0}^\infty \sum_{k = 0}^\infty (-1)^{l + k}b_l a_k\ch{l + k}{k} , \nonumber
\end{align}
and this last expression is equal to $\pair{\psi}{\phi}$ by a reversal of steps (\ref{symm: step 1})--(\ref{symm: step 3}) with the roles of $\phi$ and $\psi$ interchanged.

We show that the interchanging of the summation signs at (\ref{symm: interchange step}) is permissible. By \cite[Chap.~2, Sect.~1.2, Cor.]{robert:p-adic}, it is enough to show that, for any $\eps > 0$, there are only finitely many pairs $(k,l)$ such that $|\eta(k,l)| > \eps$, where $\eta(k,l) = (-1)^{k + l}a_k b_l\ch{k + l}{k}$. Of course, $|\eta(k,l)| \leq |a_k|\,|b_l|$, so it is enough to show that there are $M,N \geq 0$ such that
\begin{enumerate}[label=(\roman*)]
\item for all $k \geq M$ and all $l \geq 0$, $|a_k|\,|b_l| \leq \eps$, and
\item for all $k \geq 0$ and all $l \geq N$, $|a_k|\,|b_l| \leq \eps$.
\end{enumerate}
For this, we simply invoke Lemma~\ref{a_k b_l lemma}.

Finally, we show that $\pair{\cdot}{\cdot}$ is non-degenerate. Fix $\psi \in \cfns{\Zp}{\Cp}$, and suppose that $\pair{\phi}{\psi} = 0$ for all $\phi \in \cfns{\Zp}{\Cp}$. If $\phi(x) = \ch{x}{n}$, then $\pair{\phi}{\psi}$ is the coefficient of $\gfvar^n$ in $\agen{\psi}(\gfvar)$, which is $(-1)^n\psi(-1 - n)$, so $n \geq 0$ being arbitrary, we see that $\psi(-1 - n) = 0$ for all $n \geq 0$, and then $\psi = 0$ by continuity.
\end{proof}

In fact, we have the following more symmetric version of the double sum for $\pair{\phi}{\psi}$ appearing in the proof of Proposition~\ref{bilinear form prop}. It is more symmetric because the order of summation does not privilege either direction, $k$ or $l$, in the lattice of pairs $(k,l)$; rather, it runs through the lattice diagonally.

\begin{prop} \label{symm version of pairing}
Suppose that $\phi$ and $\psi$ have Mahler coefficients $(a_k)_{k \geq 0}$ and $(b_l)_{l \geq 0}$ respectively. Then
\[ \pair{\phi}{\psi} = \sum_{n = 0}^\infty (-1)^n \sum_{k = 0}^n \ch{n}{k}a_k b_{n - k} .\]
\end{prop}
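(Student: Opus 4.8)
The plan is to start from the double sum for $\pair{\phi}{\psi}$ already obtained in the proof of Proposition~\ref{bilinear form prop}, namely
\[ \pair{\phi}{\psi} = \sum_{k = 0}^\infty \sum_{l = 0}^\infty (-1)^{k + l} a_k b_l \ch{k + l}{k} , \]
and to reorganize it along the anti-diagonals $k + l = n$ instead of along rows or columns. Grouping the pairs $(k,l)$ according to the value $n = k + l \geq 0$, the pairs with $k + l = n$ are exactly $(k, n - k)$ for $0 \leq k \leq n$, and for such a pair $(-1)^{k + l} = (-1)^n$ and $\ch{k + l}{k} = \ch{n}{k}$; this immediately identifies
\[ \sum_{n = 0}^\infty (-1)^n \sum_{k = 0}^n \ch{n}{k} a_k b_{n - k} \]
as the value of the rearranged sum.

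What needs to be checked is that this rearrangement is legitimate, \ie that the doubly-indexed family $\eta(k,l) = (-1)^{k + l} a_k b_l \ch{k + l}{k}$ is summable and hence may be summed in any order. Since $|\ch{k + l}{k}| \leq 1$, we have $|\eta(k,l)| \leq |a_k|\,|b_l|$, and Lemma~\ref{a_k b_l lemma} shows that for every $\eps > 0$ only finitely many pairs $(k,l)$ satisfy $|a_k|\,|b_l| > \eps$. By \cite[Chap.~2, Sect.~1.2, Cor.]{robert:p-adic}, the sum $\sum_{(k,l)} \eta(k,l)$ is therefore well defined independently of the order of summation; in particular, the iterated sum appearing in the proof of Proposition~\ref{bilinear form prop} and the diagonal sum above compute the same element of $\Cp$.

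I expect the only genuine obstacle to be this summability justification, but it is handled by exactly the tools already deployed in this section (Lemma~\ref{a_k b_l lemma} together with the cited corollary of Robert), so the argument should be brief; the remaining ingredient is just the elementary identity $\ch{k + l}{k} = \ch{n}{k}$ for $n = k + l$ and the reindexing $l = n - k$.
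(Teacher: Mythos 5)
Your proposal is correct and follows essentially the same route as the paper: both start from the double sum $\sum_{k,l}(-1)^{k+l}\ch{k+l}{k}a_kb_l$ established in the proof of Proposition~\ref{bilinear form prop}, regroup it along the anti-diagonals $k+l=n$, and justify the rearrangement via Lemma~\ref{a_k b_l lemma} together with the summability results of \cite[Chap.~2, Sect.~1.2]{robert:p-adic}. The only cosmetic difference is that the paper phrases the justification as the anti-diagonal enumeration of the terms forming a null sequence and invokes the Proposition of that section, while you invoke the Corollary on summable families; these amount to the same argument.
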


\begin{proof}
By (\ref{symm: step 3}), $\pair{\phi}{\psi} = \sum_{k = 0}^\infty \sum_{l = 0}^\infty \eta(k,l)$, where $\eta(k,l) = (-1)^{k + l}\ch{k + l}{l}a_k b_l$. Let $(\alpha_i)_{i \geq 0}$ be the sequence
\[ \begin{array}{l}
(\eta(0,0), \\
\phantom{(}\eta(0,1),\eta(1,0), \\
\phantom{(}\eta(0,2),\eta(1,1),\eta(2,0), \\
\phantom{(}\eta(0,3),\eta(1,2),\eta(2,1),\eta(3,0), \\
\phantom{(}\ldots) .
\end{array} \]
We claim that this sequence converges to zero. Let $\eps > 0$. We again use Lemma~\ref{a_k b_l lemma} to obtain $M,N \geq 0$ such that if either $k \geq M$ or $l \geq N$, then $|a_k b_l| \leq \eps$. Hence, if $k + l \geq M + N$, then either $k \geq M$ or $l \geq N$, so $|\eta(k,l)| \leq \eps$. This establishes the claim that $\alpha_0,\alpha_1,\alpha_2,\ldots$ converges to zero. But then \cite[Chap.~2, Sect.~1.2, Prop.]{robert:p-adic} implies that
\[ \sum_{i = 0}^\infty \alpha_i = \sum_{k = 0}^\infty \sum_{l = 0}^\infty \eta(k,l) ,\]
\ie
\begin{align*}
\sum_{n = 0}^\infty (-1)^n \sum_{k = 0}^n \ch{n}{k}a_k b_{n - k} &= \sum_{k = 0}^\infty \sum_{l = 0}^\infty \eta(k,l) \\
&= \pair{\phi}{\psi} .
\end{align*}
\end{proof}

\begin{lemma} \label{measure assoc to deriv}
If $G \in \Cp\pwr{\gfvar}$ has bounded coefficients, and if $\phi \in \cfns{\Zp}{\Cp}$, then
\[ \int_{\Zp} \phi\,d\mu(G') = \int_{\Zp} x\phi(x - 1)\,d\mu(G)(x) .\]
\end{lemma}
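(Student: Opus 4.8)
The plan is to reduce both sides to the same $p$-adically convergent series in the Mahler coefficients of $\phi$ and the Taylor coefficients of $G$. Write $G(\gfvar) = \sum_{n = 0}^\infty b_n\gfvar^n$, so that the $b_n$ are bounded, say $|b_n| \leq C$ for all $n$. Then $G'(\gfvar) = \sum_{n = 0}^\infty (n + 1)b_{n + 1}\gfvar^n$, and since $|(n + 1)b_{n + 1}| \leq |b_{n + 1}| \leq C$ (using $|n + 1| \leq 1$), the power series $G'$ again has bounded coefficients, so $\mu(G')$ is a well-defined measure whose sequence of ``moments'' $\mu(G')(\beta_n)$ is $((n + 1)b_{n + 1})_{n \geq 0}$. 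Letting $(a_n)_{n \geq 0}$ be the Mahler coefficients of $\phi$, the definition of $\mu(G')$ then gives
\[
\int_{\Zp} \phi\,d\mu(G') = \sum_{n = 0}^\infty a_n(n + 1)b_{n + 1} ,
\]
the series converging because $a_n \to 0$ and the $b_{n+1}$ are bounded.

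Next I would compute the Mahler series of the function $x \mapsto x\phi(x - 1)$. The key identity is $x\beta_n(x - 1) = (n + 1)\beta_{n + 1}(x)$, which drops out of the computation already performed in the proof of Proposition~\ref{calS of mahler}: there we found $\calS(\beta_n) = \beta_n - (n + 1)\beta_{n + 1}$, while by definition $\calS(\beta_n)(x) = \beta_n(x) - x\beta_n(x - 1)$, so comparing gives the identity. Now the linear operator $\phi \mapsto (x \mapsto x\phi(x - 1))$ on $\cfns{\Zp}{\Cp}$ has operator norm at most $1$, hence is continuous, so from $\phi = \sum_{n = 0}^\infty a_n\beta_n$ (uniformly convergent) I get
\[
x\phi(x - 1) = \sum_{n = 0}^\infty a_n\,x\beta_n(x - 1) = \sum_{n = 0}^\infty a_n(n + 1)\beta_{n + 1}(x) = \sum_{m = 1}^\infty m\,a_{m - 1}\beta_m(x) ,
\]
with the series converging uniformly (the $m$th term has sup-norm $\leq |m\,a_{m-1}| \leq |a_{m - 1}| \to 0$). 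This exhibits the Mahler coefficients of $x\phi(x - 1)$ as $0$ in degree $0$ and $m\,a_{m - 1}$ in degree $m \geq 1$.

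Applying the measure $\mu(G)$ then yields
\[
\int_{\Zp} x\phi(x - 1)\,d\mu(G)(x) = \sum_{m = 1}^\infty m\,a_{m - 1}\,b_m = \sum_{n = 0}^\infty (n + 1)\,a_n\,b_{n + 1} ,
\]
the last step being the change of index $m = n + 1$ in a convergent $p$-adic series. Since this agrees with the expression obtained for $\int_{\Zp}\phi\,d\mu(G')$, the lemma follows. The only points requiring a little care are the observations that $|n + 1| \leq 1$ in $\Zp$ (so that $G'$ has bounded coefficients and the reindexed series still converges) and the justification that the term-by-term manipulation computing the Mahler series of $x\phi(x-1)$ is legitimate; both are straightforward, and I do not anticipate a genuine obstacle here.

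\hspace{\stretch{1}}\rule{1.5ex}{1.5ex} \vspace{5 mm}
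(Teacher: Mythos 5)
Your proposal is correct and follows essentially the same route as the paper: both compute $\int_{\Zp}\phi\,d\mu(G')$ as $\sum_n a_n(n+1)b_{n+1}$ and then use the binomial identity $x\binom{x-1}{n} = (n+1)\binom{x}{n+1}$ to identify the reindexed series with the Mahler expansion of $x\phi(x-1)$, paired against $\mu(G)$. Your extra care about the boundedness of the coefficients of $G'$ and the uniform convergence of the term-by-term manipulation is sound but not a substantive departure.
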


\begin{proof}
Suppose that
\[ G(\gfvar) = \sum_{n = 0}^\infty b_n\gfvar^n \quad\text{and}\quad \phi(x) = \sum_{n = 0}^\infty a_n\ch{x}{n} .\]
Then $G'(\gfvar) = \sum_{n = 0}^\infty (n + 1)b_{n + 1}\gfvar^n$, so
\beq \label{measure assoc to deriv: eqn}
\int_{\Zp} \phi\,d\mu(G') = \sum_{n = 0}^\infty a_n(n + 1)b_{n + 1} = \sum_{n = 1}^\infty na_{n - 1}b_n .
\eeq
But
\begin{align*}
\sum_{n = 1}^\infty na_{n - 1}\ch{x}{n} &= \sum_{n = 1}^\infty a_{n - 1}x\ch{x - 1}{n - 1} \eqcomb{a standard binomial identity} \\
&= x\sum_{n = 0}^\infty a_n\ch{x - 1}{n} \\
&= x\phi(x - 1) ,
\end{align*}
so (\ref{measure assoc to deriv: eqn}) shows that $\int_{\Zp} \phi\,d\mu(G') = \int_{\Zp} x\phi(x - 1)\,d\mu(G)(x)$.
\end{proof}

\begin{theorem} \label{pairing is self-adj}
The operator $\calS$ on $\cfns{\Zp}{\Cp}$ is self-adjoint with respect to $\pair{\cdot}{\cdot}$, that is,
\[ \pair{\calS(\phi)}{\psi} = \pair{\phi}{\calS(\psi)} \]
for all $\phi,\psi \in \cfns{\Zp}{\Cp}$.
\end{theorem}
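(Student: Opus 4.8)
The idea is that the two previously established facts — Proposition~\ref{calS and gen funcs} (which tells us how $\calS$ acts on the ``$\agen{}$-side'') and Lemma~\ref{measure assoc to deriv} (which tells us how formal differentiation of a power series translates under $\mu$) — are exactly the two halves of the desired identity, one for each slot of the pairing. Concretely, by the last sentence of Proposition~\ref{calS and gen funcs}, applied with $\lfn = \psi$ and $\rfn = \calS(\psi)$, we have the power-series identity $\agen{\calS(\psi)}(\gfvar) = \agen{\psi}(\gfvar) - \agen{\psi}'(\gfvar)$, i.e.\ $H_{\calS(\psi)} = H_\psi - H_\psi'$. First I would check that all three of these power series have bounded coefficients: $H_\psi$ does because $\psi$ is bounded on the compact set $\Zp$, and the $n$th coefficient of $H_\psi'$ is $(-1)^{n+1}(n+1)\psi(-1-(n+1))$, which is bounded by $\|\psi\|$ since $|n+1| \le 1$. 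So each of these power series defines a measure, and since the construction $G \mapsto \mu(G)$ is visibly linear, $\mu(H_{\calS(\psi)}) = \mu(H_\psi) - \mu(H_\psi')$.

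Next I would compute, for arbitrary $\phi, \psi \in \cfns{\Zp}{\Cp}$ (noting $\calS(\phi)$ is again continuous, so lies in the domain):
\[
\pair{\phi}{\calS(\psi)} = \int_{\Zp} \phi\,d\mu(H_{\calS(\psi)}) = \int_{\Zp} \phi\,d\mu(H_\psi) - \int_{\Zp} \phi\,d\mu(H_\psi') .
\]
To the second integral I apply Lemma~\ref{measure assoc to deriv} with $G = H_\psi$, giving $\int_{\Zp} \phi\,d\mu(H_\psi') = \int_{\Zp} x\phi(x-1)\,d\mu(H_\psi)(x)$. Substituting and using linearity of the integral (in the integrand, for a fixed measure),
\[
\pair{\phi}{\calS(\psi)} = \int_{\Zp} \bigl(\phi(x) - x\phi(x-1)\bigr)\,d\mu(H_\psi)(x) = \int_{\Zp} \calS(\phi)\,d\mu(H_\psi) = \pair{\calS(\phi)}{\psi},
\]
which is the asserted self-adjointness.

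I do not expect a genuine obstacle here: the statement falls out by stringing together Proposition~\ref{calS and gen funcs} and Lemma~\ref{measure assoc to deriv}. The only points requiring a word of care are the boundedness of the coefficients of $H_\psi'$ (so that $\mu(H_\psi')$ makes sense) and the additivity $\mu(H_\psi - H_\psi') = \mu(H_\psi) - \mu(H_\psi')$, both of which are immediate from the definitions. If one wanted to avoid invoking Proposition~\ref{calS and gen funcs}, one could instead verify $H_{\calS(\psi)} = H_\psi - H_\psi'$ directly from the definitions of $\calS$ and $\agen{}$, but reusing the earlier proposition is cleaner.
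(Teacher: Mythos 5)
Your proof is correct and follows essentially the same route as the paper: both arguments combine Lemma~\ref{measure assoc to deriv} with the identity $H_{\calS(\psi)} = H_\psi - H_\psi'$ from Proposition~\ref{calS and gen funcs}, the only difference being that you start from $\pair{\phi}{\calS(\psi)}$ and the paper starts from $\pair{\calS(\phi)}{\psi}$. The extra checks on boundedness of the coefficients of $H_\psi'$ and linearity of $G \mapsto \mu(G)$ are fine but not needed beyond what the paper already takes as understood.
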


\begin{proof}
\begin{align*}
\pair{\calS(\phi)}{\psi} &= \int_{\Zp} \calS(\phi)\,d\mu(H_\psi) \\
&= \int_{\Zp} (\phi(x) - x\phi(x - 1))\,d\mu(H_\psi)(x) \\
&= \int_{\Zp} \phi(x)\,d\mu(H_\psi)(x) - \int_{\Zp} x\phi(x - 1)\,d\mu(H_\psi)(x) \\
&= \int_{\Zp} \phi\,d\mu(H_\psi) - \int_{\Zp} \phi\,d\mu(H_\psi') \eqcom{by Lemma~\ref{measure assoc to deriv}} \\
&= \int_{\Zp} \phi\,d\mu(H_\psi - H_\psi') \\
&= \int_{\Zp} \phi\,d\mu(H_{\calS(\psi)}) \eqcom{by Proposition~\ref{calS and gen funcs}} \\
&= \pair{\phi}{\calS(\psi)} .
\end{align*}
\end{proof}

\begin{cor} \label{inc gamma adjointness}
If $r,s \in \lball{1}{1}[\Cp]$, then $\pair{g_r}{\gfn{s}} = \pair{\gfn{r}}{g_s}$, where the functions $\gfn{r},g_r$ are as in Section~\ref{sec: p-adic inc}.
\end{cor}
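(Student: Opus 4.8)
The plan is to derive this directly from the self-adjointness of $\calS$ established in Theorem~\ref{pairing is self-adj}, with essentially no extra work. The key observation is simply that the functions $\gfn{r}$ and $\gfn{s}$ were \emph{defined} in Section~\ref{sec: p-adic inc} as $\gfn{r} = \calS^{-1}(g_r)$ and $\gfn{s} = \calS^{-1}(g_s)$, so that $\calS(\gfn{r}) = g_r$ and $\calS(\gfn{s}) = g_s$. Both $g_r$ and $g_s$ lie in $\cfns{\Zp}{\Cp}$ (indeed they are locally analytic, by Proposition~\ref{amice's criterion}), and hence so do $\gfn{r}$ and $\gfn{s}$ by Theorem~\ref{calS theorem}, so the pairing $\pair{\cdot}{\cdot}$ is defined on all the functions involved.

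First I would rewrite the left-hand side using $g_r = \calS(\gfn{r})$, obtaining $\pair{g_r}{\gfn{s}} = \pair{\calS(\gfn{r})}{\gfn{s}}$. Next I would apply Theorem~\ref{pairing is self-adj} with $\phi = \gfn{r}$ and $\psi = \gfn{s}$, giving $\pair{\calS(\gfn{r})}{\gfn{s}} = \pair{\gfn{r}}{\calS(\gfn{s})}$. Finally I would use $\calS(\gfn{s}) = g_s$ to conclude $\pair{\gfn{r}}{\calS(\gfn{s})} = \pair{\gfn{r}}{g_s}$, which chains together to the desired identity
\[ \pair{g_r}{\gfn{s}} = \pair{\gfn{r}}{g_s} .\]

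There is no real obstacle here: the entire content is packaged into Theorem~\ref{pairing is self-adj}, and this corollary is just the specialization to the particular pair of functions $(g_r, \gfn{r})$. The only thing one might pause over is making sure that $g_r, g_s, \gfn{r}, \gfn{s}$ all genuinely belong to $\cfns{\Zp}{\Cp}$ so that all four pairings in the argument make sense, but this was already noted when these functions were introduced. So the proof is a three-line computation.
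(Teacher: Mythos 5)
Your proof is correct and is exactly the paper's argument: the paper's own proof simply notes that $\calS(\gfn{r}) = g_r$ and invokes Theorem~\ref{pairing is self-adj}. Your added check that all four functions lie in $\cfns{\Zp}{\Cp}$ is fine but, as you say, already settled when the functions were introduced.
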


\begin{proof}
This follows immediately from the fact that $\calS(\gfn{r}) = g_r$.
\end{proof}

\begin{cor}
If $r,s \in \lball{1}{1}[\Cp]$, and if $(\gamma_{r,n})_{n \geq 0}$ is the sequence of Mahler coefficients of $\gfn{r}$, then
\[ \sum_{n = 0}^\infty (1 - r)^n \gfn{s}(-1 - n) = \sum_{n = 0}^\infty (-1)^n \gamma_{r,n}s^{-1 - n} .\]
\end{cor}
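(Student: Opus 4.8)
The plan is to deduce the identity directly from Corollary~\ref{inc gamma adjointness}, which asserts $\pair{g_r}{\gfn{s}} = \pair{\gfn{r}}{g_s}$, by rewriting each side as a single-index series through the definition of the pairing.

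The first step is to record a general formula: for $\phi,\psi \in \cfns{\Zp}{\Cp}$ with $\phi$ having Mahler coefficients $(a_k)_{k \geq 0}$, one has $\pair{\phi}{\psi} = \sum_{k = 0}^\infty (-1)^k a_k\,\psi(-1 - k)$. Indeed, by definition $\pair{\phi}{\psi} = \int_{\Zp}\phi\,d\mu(H_\psi)$; the $n$th coefficient of $H_\psi(\gfvar) = \agen{\psi}(\gfvar)$ is $(-1)^n\psi(-1 - n)$, and these are bounded since $\psi$ is continuous on the compact space $\Zp$, so $\mu(H_\psi)$ is a well-defined measure. The formula $\int_{\Zp}\phi\,d\mu(G) = \sum_n a_n b_n$ (for $G = \sum_n b_n\gfvar^n$) then yields the claim, the series converging because $a_k \to 0$ and $\psi$ is bounded.

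The second step applies this with $\phi = g_r$: since $g_r(x) = r^x = (1 + (r - 1))^x$ has $n$th Mahler coefficient $(r - 1)^n$, we get $\pair{g_r}{\gfn{s}} = \sum_{n = 0}^\infty (-1)^n(r - 1)^n\gfn{s}(-1 - n) = \sum_{n = 0}^\infty (1 - r)^n\gfn{s}(-1 - n)$, the left-hand side. The third step applies the formula with $\phi = \gfn{r}$, whose Mahler coefficients are $(\gamma_{r,n})_{n \geq 0}$ by hypothesis, and $\psi = g_s$, using $g_s(-1 - n) = s^{-1 - n}$: this gives $\pair{\gfn{r}}{g_s} = \sum_{n = 0}^\infty (-1)^n\gamma_{r,n}s^{-1 - n}$, the right-hand side. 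Equating the two via Corollary~\ref{inc gamma adjointness} completes the proof.

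There is no real obstacle: the substance of the statement is Corollary~\ref{inc gamma adjointness}, and the only point needing care is the derivation of the single-index formula for $\pair{\cdot}{\cdot}$ together with the convergence of the series, both immediate from compactness of $\Zp$ and the definitions in Section~\ref{sec: self-adj}.
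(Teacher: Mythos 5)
Your proposal is correct and follows essentially the same route as the paper: both sides reduce the claim to Corollary~\ref{inc gamma adjointness} and then expand each pairing as a single-index series using the coefficients of $H_\psi(\gfvar)=\agen{\psi}(\gfvar)$ against the Mahler coefficients of the other function (the paper phrases the left-hand computation as $H_{\gfn{s}}(r-1)$ via equation~(\ref{int (1 + z)^x}), which is the same calculation you perform with the Mahler coefficients $(r-1)^n$ of $g_r$).
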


\begin{proof}
On the one hand,
\begin{align*}
\pair{g_r}{\gfn{s}} &= \int_{\Zp} r^x\,d\mu(H_{\gfn{s}})(x) \\
&= H_{\gfn{s}}(r - 1) \eqcom{by (\ref{int (1 + z)^x})} \\
&= \sum_{n = 0}^\infty (1 - r)^n \gfn{s}(-1 - n) .
\end{align*}
On the other hand,
\[ \pair{\gfn{r}}{g_s} = \int_{\Zp} \gfn{r}\,d\mu(H_{g_s}) = \sum_{n = 0}^\infty \gamma_{r,n}(-1)^n s^{-1 - n} .\]
Now use Corollary~\ref{inc gamma adjointness}.
\end{proof}

More generally, we have the following.

\begin{cor} \label{gamma r mahl psi corollary}
If $r \in \lball{1}{1}[\Cp]$ and $(\gamma_{r,n})_{n \geq 0}$ is the sequence of Mahler coefficients of $\gfn{r}$, then for any $\psi \in \cfns{\Zp}{\Cp}$,
\begin{align*}
\sum_{n = 0}^\infty (-1)^n\calS(\psi)(-1 - n)\gamma_{r,n} &= H_\psi(r - 1) \\
&= \sum_{n = 0}^\infty \psi(-1 - n)(1 - r)^n .
\end{align*}
\end{cor}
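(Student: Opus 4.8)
The plan is to obtain both equalities by evaluating the bilinear form $\pair{\cdot}{\cdot}$ in two ways on the pair $(\gfn{r},\psi)$, using the self-adjointness of $\calS$ (Theorem~\ref{pairing is self-adj}) together with the fact that $\calS(\gfn{r}) = g_r$. Note first that $\gfn{r}$ is continuous (indeed locally analytic by Theorem~\ref{calS on la}), so $\pair{\gfn{r}}{\cdot}$ is defined, and all the series below converge: $H_\psi$ and $H_{\calS(\psi)}$ have bounded coefficients, while $(\gamma_{r,n})_n$ is a null sequence as the Mahler coefficients of a continuous function.

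The first step is to compute $\pair{g_r}{\psi}$ directly. Since $g_r(x) = r^x = (1 + (r-1))^x$ with $|r - 1| < 1$, equation~(\ref{int (1 + z)^x}) gives
\[ \pair{g_r}{\psi} = \int_{\Zp} g_r\,d\mu(H_\psi) = H_\psi(r - 1) .\]
Expanding the power series $H_\psi(t) = \agen{\psi}(t) = \sum_{n = 0}^\infty (-1)^n\psi(-1-n)t^n$ at $t = r - 1$ and using $(-1)^n(r-1)^n = (1-r)^n$ yields $H_\psi(r-1) = \sum_{n = 0}^\infty \psi(-1-n)(1-r)^n$, which is the right-hand equality of the corollary.

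The second step is to re-evaluate the same pairing using self-adjointness. Because $\calS(\gfn{r}) = g_r$, Theorem~\ref{pairing is self-adj} gives $\pair{g_r}{\psi} = \pair{\calS(\gfn{r})}{\psi} = \pair{\gfn{r}}{\calS(\psi)}$. Now unwind the definition of the pairing and of the measure attached to a power series: $\gfn{r}$ has Mahler coefficients $(\gamma_{r,n})_n$, and the power series $H_{\calS(\psi)}(t) = \sum_{n = 0}^\infty (-1)^n\calS(\psi)(-1-n)t^n$ has $n$th coefficient $(-1)^n\calS(\psi)(-1-n)$, so
\[ \pair{\gfn{r}}{\calS(\psi)} = \int_{\Zp} \gfn{r}\,d\mu(H_{\calS(\psi)}) = \sum_{n = 0}^\infty (-1)^n\calS(\psi)(-1-n)\gamma_{r,n} .\]
Chaining these identities with the computation of the first step gives the left-hand equality, completing the proof.

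There is no real obstacle: everything of substance is already contained in Theorem~\ref{pairing is self-adj} and equation~(\ref{int (1 + z)^x}), and what remains is bookkeeping. The only point requiring a moment's care is matching the Mahler coefficients of $\gfn{r}$ against the power-series coefficients of $H_{\calV}$ in the correct order when expanding $\int_{\Zp}\gfn{r}\,d\mu(H_{\calV})$ for $\calV = \calS(\psi)$, and noting (as above) that this sum converges because $\mu(H_{\calS(\psi)})$ is a measure and $(\gamma_{r,n})_n$ tends to $0$.
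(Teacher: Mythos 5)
Your proposal is correct and follows essentially the same route as the paper: apply the self-adjointness theorem with $\phi = \gfn{r}$ (so that $\calS(\gfn{r}) = g_r$), evaluate $\pair{g_r}{\psi}$ via equation~(\ref{int (1 + z)^x}) to get $H_\psi(r-1)$, and expand $\pair{\gfn{r}}{\calS(\psi)}$ using the Mahler coefficients of $\gfn{r}$ against the coefficients of $H_{\calS(\psi)}$. The extra remarks on convergence are fine but not needed beyond what the paper's framework already guarantees.
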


\begin{proof}
Taking $\phi = \gfn{r}$ in the theorem gives $\pair{g_r}{\psi} = \pair{\gfn{r}}{\calS(\psi)}$. But
\[ \pair{g_r}{\psi} = \int_{\Zp} r^x\,d\mu(H_\psi)(x) = H_\psi(r - 1) \]
by (\ref{int (1 + z)^x}), and this is equal to $\sum_{n = 0}^\infty \psi(-1 - n)(1 - r)^n$. On the other hand,
\[ \pair{\gfn{r}}{\calS(\psi)} = \int_{\Zp} \gfn{r}\,d\mu(H_{\calS(\psi)}) = \sum_{n = 0}^\infty (-1)^n\calS(\psi)(-1 - n)\gamma_{r,n} .\]
\end{proof}

\bex
If in Corollary~\ref{gamma r mahl psi corollary} we take $\psi$ to be the constant function $\one$, then $H_\psi(\gfvar) = 1/(1 + \gfvar)$ and $\calS(\psi)(x) = 1 - x$, so we have
\[ \sum_{n = 0}^\infty (-1)^n(n + 2)\gamma_{r,n} = \frac{1}{r} .\]
Note that the case $r = 1$ says that $\sum_{n = 0}^\infty (-1)^n(n + 2)\cdot n! = 1$, since $\gfn{1}(x) = \qf(x) = \sum_{n = 0}^\infty n!\ch{x}{n}$.
\eex

\section{Convolution of Mahler coefficients} \label{sec: conv of mahl}

Let $\decayegf$ denote the set of power series $G(\gfvar) = \sum_{n = 0}^\infty a_n\tfrac{\gfvar^n}{n!} \in \Cp\pwr{\gfvar}$ such that $a_n \to 0$.

\begin{prop}
$\decayegf$ is a $\Cp$-subalgebra of $\Cp\pwr{\gfvar}$.
\end{prop}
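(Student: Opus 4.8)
The plan is to verify the three things needed for $\decayegf$ to be a $\Cp$-subalgebra of $\Cp\pwr{\gfvar}$: that it is a $\Cp$-linear subspace, that it contains the multiplicative identity, and that it is closed under multiplication. The first two are immediate. If $G(\gfvar) = \sum_n a_n\tfrac{\gfvar^n}{n!}$ and $H(\gfvar) = \sum_n b_n\tfrac{\gfvar^n}{n!}$ lie in $\decayegf$, then $G + H = \sum_n (a_n + b_n)\tfrac{\gfvar^n}{n!}$ and $\lambda G = \sum_n \lambda a_n\tfrac{\gfvar^n}{n!}$ for $\lambda \in \Cp$, with $a_n + b_n \to 0$ and $\lambda a_n \to 0$; and the constant series $1$ is $\sum_n a_n \tfrac{\gfvar^n}{n!}$ with $a_0 = 1$ and $a_n = 0$ for $n \geq 1$, whose coefficient sequence trivially tends to $0$.

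The substance is closure under multiplication. First I would record the binomial-convolution formula for the product in these coordinates: multiplying the two series and collecting powers of $\gfvar$,
\[ G(\gfvar)H(\gfvar) = \sum_{n=0}^\infty \left(\sum_{k=0}^n \ch{n}{k} a_k b_{n-k}\right)\frac{\gfvar^n}{n!} , \]
there being no analytic subtlety, since multiplication in $\Cp\pwr{\gfvar}$ is defined by finite sums. Writing $c_n = \sum_{k=0}^n \ch{n}{k} a_k b_{n-k}$, it then remains only to show that $c_n \to 0$.

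For that I would combine the ultrametric inequality with the integrality of binomial coefficients: since $\ch{n}{k} \in \Z$, we have $|\ch{n}{k}| \leq 1$, hence $|c_n| \leq \max_{0 \leq k \leq n} |a_k|\,|b_{n-k}|$. Now Lemma~\ref{a_k b_l lemma}, applied to the sequences $(a_k)_{k \geq 0}$ and $(b_l)_{l \geq 0}$, furnishes for any $\eps > 0$ integers $M, N \geq 0$ such that $|a_k b_l| \leq \eps$ whenever $k \geq M$ or $l \geq N$. If $n \geq M + N$, then for every $k \in \{0, \ldots, n\}$ we have $k \geq M$ or $n - k \geq N$ (otherwise $n = k + (n-k) < M + N$), so $|a_k b_{n-k}| \leq \eps$, whence $|c_n| \leq \eps$. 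This proves $c_n \to 0$, that is, $GH \in \decayegf$.

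There is no genuine obstacle here; the only point requiring a moment's care is the bookkeeping in the last step — matching each pair $(k, n-k)$ against the two regimes supplied by Lemma~\ref{a_k b_l lemma} — together with the recognition that the relevant product formula is precisely the binomial convolution, which is exactly what makes ``exponential generating function coefficients tend to zero'' a multiplicative condition.
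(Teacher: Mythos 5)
Your proof is correct and follows essentially the same route as the paper: the binomial-convolution product formula, the ultrametric bound $|c_n| \leq \max_k |a_k b_{n-k}|$ using integrality of $\binom{n}{k}$, and Lemma~\ref{a_k b_l lemma} with the observation that $n \geq M + N$ forces $k \geq M$ or $n - k \geq N$. The only (harmless) addition is your explicit verification of the subspace and identity conditions, which the paper dismisses as obvious.
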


\begin{proof}
It is obvious that $\decayegf$ is a $\Cp$-subspace. Now suppose that $G(\gfvar) = \sum_{n = 0}^\infty a_n\tfrac{\gfvar^n}{n!}$ and $H(\gfvar) = \sum_{n = 0}^\infty b_n\tfrac{\gfvar^n}{n!}$, where $a_n \to 0$ and $b_n \to 0$. Then
\[ G(\gfvar)H(\gfvar) = \sum_{n = 0}^\infty \left(\sum_{k = 0}^n \ch{n}{k}a_k b_{n - k}\right)\frac{\gfvar^n}{n!} .\]
We show that the sequence of elements
\[ \sum_{k = 0}^n \ch{n}{k}a_k b_{n - k} \]
converges to zero as $n \to \infty$. Let $\eps > 0$. By Lemma~\ref{a_k b_l lemma} again, we may choose $M,N \geq 0$ such that if either $k \geq M$ or $l \geq N$, then $|a_k b_l| \leq \eps$. If $n \geq M + N$ and $0 \leq k \leq n$, then either $k \geq M$ or $n - k \geq N$, so $|a_k b_{n - k}| \leq \eps$, and then
\[ \left|\sum_{k = 0}^n \ch{n}{k}a_k b_{n - k}\right| \leq \eps .\]
\end{proof}

Now consider the mutually inverse maps
\beal
\cfns{\Zp}{\Cp} &\ra& \decayegf \nonumber \\*
\phi &\mapsto& \sum_{n = 0}^\infty (\nabla^n\phi)(0)\frac{\gfvar^n}{n!} , \label{cfns decay corr 1} \\[2ex]
\decayegf &\ra& \cfns{\Zp}{\Cp} \nonumber \\*
G &\mapsto& \sum_{n = 0}^\infty G^{(n)}(0)\beta_n . \label{cfns decay corr 2}
\eeal
They are $\Cp$-linear maps but not algebra homomorphisms if $\cfns{\Zp}{\Cp}$ has the usual product. We transfer the product on $\decayegf \con \C\pwr{\gfvar}$ to a new product $\star$ on $\cfns{\Zp}{\Cp}$ via the above maps. Concretely, if $\phi$ and $\psi$ have Mahler coefficients $(a_k)_{k \geq 0}$ and $(b_l)_{l \geq 0}$ respectively, then
\beq \label{binom mahl conv}
\phi \star \psi = \sum_{n = 0}^\infty \left(\sum_{k = 0}^n \ch{n}{k}a_k b_{n - k}\right)\beta_n ,
\eeq
so $\star$ is a binomial convolution of the Mahler coefficients. Note that the identity element of $(\cfns{\Zp}{\Cp},+,\star)$ is the constant function $\one$.

\begin{remark}
By \cite[Chap.~4, Sect.~1.1, Comment (2)]{robert:p-adic}, the power series corresponding to a continuous function $\phi \in \cfns{\Zp}{\Cp}$ via (\ref{cfns decay corr 1}) and (\ref{cfns decay corr 2}) is $\exp(-t)\egen{\phi}(t)$.
\end{remark}

Note that, because $(\decayegf,+,\cdot)$ is a commutative $\Cp$-algebra with no zero divisors, the same is true of $(\cfns{\Zp}{\Cp},+,\star)$.

We will also require the following notation. If $\phi \in \cfns{\Zp}{\Cp}$ and $m \in \Z_{\geq 0}$, let
\beq \label{star power non-neg}
\phi^{\star m} = \underbrace{\phi \star \cdots \star \phi}_m .
\eeq
If $\phi$ is a unit in $\cfns{\Zp}{\Cp}$ with respect to the product operation $\star$, then for $m < 0$ we let
\beq \label{star power neg}
\phi^{\star m} = (\phi^{-1})^{\star(-m)} ,
\eeq
where $\phi^{-1}$ here denotes the inverse of $\phi$ with respect to $\star$.

\begin{prop} \label{star and pairing}
If $\phi,\psi \in \cfns{\Zp}{\Cp}$, then $\pair{\phi}{\psi} = (\phi \star \psi)(-1)$.
\end{prop}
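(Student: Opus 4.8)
The plan is to reduce the identity to a direct comparison of two series that have already appeared in the paper, namely the diagonal double-sum expression for $\pair{\phi}{\psi}$ from Proposition~\ref{symm version of pairing} and the defining formula (\ref{binom mahl conv}) for $\star$, the bridge between them being the elementary evaluation $\beta_n(-1) = (-1)^n$.

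Concretely, first I would fix $\phi,\psi \in \cfns{\Zp}{\Cp}$ with Mahler coefficients $(a_k)_{k \geq 0}$ and $(b_l)_{l \geq 0}$, and recall from (\ref{binom mahl conv}) that
\[ \phi \star \psi = \sum_{n = 0}^\infty c_n \beta_n, \qquad c_n = \sum_{k = 0}^n \ch{n}{k} a_k b_{n - k}, \]
this being a genuine element of $\cfns{\Zp}{\Cp}$ since $c_n \to 0$ (as established in the proof that $\decayegf$ is a subalgebra, transported via the maps (\ref{cfns decay corr 1})--(\ref{cfns decay corr 2})). Next I would compute $\beta_n(-1) = \ch{-1}{n} = \frac{(-1)(-2)\cdots(-n)}{n!} = (-1)^n$, so that, evaluating the uniformly convergent Mahler series of $\phi \star \psi$ at the point $x = -1$,
\[ (\phi \star \psi)(-1) = \sum_{n = 0}^\infty c_n \beta_n(-1) = \sum_{n = 0}^\infty (-1)^n \sum_{k = 0}^n \ch{n}{k} a_k b_{n - k}. \]
Finally I would invoke Proposition~\ref{symm version of pairing}, which states precisely that $\pair{\phi}{\psi}$ equals this same double sum, and conclude.

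I do not expect any real obstacle here: the substantive convergence work (that $c_n \to 0$, and that the diagonally-ordered double sum equals $\pair{\phi}{\psi}$) was already carried out in the proofs cited above, so this proof is essentially a two-line chain of identities once $\beta_n(-1) = (-1)^n$ is noted. The only point deserving a word of care is that evaluation at $-1$ is legitimate term-by-term, which is immediate from the uniform convergence of a Mahler series on $\Zp$ together with $-1 \in \Zp$.
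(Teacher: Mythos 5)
Your proposal is correct and follows exactly the paper's own argument: evaluate the Mahler series (\ref{binom mahl conv}) of $\phi \star \psi$ at $x = -1$ using $\beta_n(-1) = (-1)^n$, then match the resulting double sum against Proposition~\ref{symm version of pairing}. The additional remarks on convergence and term-by-term evaluation are fine but not needed beyond what the paper already establishes.
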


\begin{proof}
Let the Mahler coefficients of $\phi$ and $\psi$ be $(a_k)_{k \geq 0}$ and $(b_l)_{l \geq 0}$ respectively. Then by (\ref{binom mahl conv}),
\[ (\phi \star \psi)(-1) = \sum_{n = 0}^\infty (-1)^n \sum_{k = 0}^n \ch{n}{k}a_k b_{n - k} ,\]
because $\beta_n(-1) = \ch{-1}{n} = (-1)^n$. But this is equal to $\pair{\phi}{\psi}$ by Proposition~\ref{symm version of pairing}.
\end{proof}

\begin{prop} \label{pairing as integral of product}
If $\phi,\psi \in \cfns{\Zp}{\Cp}$, then $\pair{\phi}{\psi} = \int_{\Zp} (\phi \star \psi)\,d\mu(H_\one)$, where $\one$ is the function with constant value $1$.
\end{prop}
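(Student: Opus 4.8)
The plan is to reduce the claim to Proposition~\ref{star and pairing}, which already identifies $\pair{\phi}{\psi}$ with the single value $(\phi \star \psi)(-1)$. Granting that, it suffices to recognize that integration against the measure $\mu(H_\one)$ is simply evaluation at $-1$, and then to apply this observation to the continuous function $\phi \star \psi$.

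First I would compute $H_\one$ explicitly. Since $\widehat{\one} = \one$, we have $\ogen{\widehat{\one}}(\gfvar) = \sum_{n \geq 0}\gfvar^n = 1/(1 - \gfvar)$, and hence $H_\one(\gfvar) = \ogen{\widehat{\one}}(-\gfvar) = 1/(1 + \gfvar) = \sum_{n \geq 0}(-1)^n\gfvar^n$; in particular $H_\one$ has bounded coefficients, so $\mu(H_\one)$ is a genuine measure. Next, for any $\chi \in \cfns{\Zp}{\Cp}$ with Mahler series $\chi = \sum_{n \geq 0}c_n\beta_n$, the definition of $\mu(H_\one)$ gives $\int_{\Zp}\chi\,d\mu(H_\one) = \sum_{n \geq 0}c_n(-1)^n$, while $\beta_n(-1) = \ch{-1}{n} = (-1)^n$ shows that this same sum equals $\chi(-1)$. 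Thus $\int_{\Zp}\chi\,d\mu(H_\one) = \chi(-1)$ for every $\chi \in \cfns{\Zp}{\Cp}$. Taking $\chi = \phi \star \psi$ and using Proposition~\ref{star and pairing},
\[ \int_{\Zp}(\phi \star \psi)\,d\mu(H_\one) = (\phi \star \psi)(-1) = \pair{\phi}{\psi} . \]

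I do not expect a real obstacle here; the only point that requires a moment's care is the identification $H_\one(\gfvar) = 1/(1 + \gfvar)$, and everything after that is bookkeeping with Mahler coefficients. (As a consistency check one may note that formula~(\ref{int (1 + z)^x}) already gives $\int_{\Zp}(1 + z)^x\,d\mu(H_\one)(x) = 1/(1 + z)$, in agreement with evaluation at $x = -1$; the Mahler-coefficient argument above is what upgrades this from the functions $x \mapsto (1 + z)^x$ to all of $\cfns{\Zp}{\Cp}$.)
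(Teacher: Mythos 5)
Your proof is correct and follows essentially the same route as the paper: both start from Proposition~\ref{star and pairing} to get $\pair{\phi}{\psi} = (\phi \star \psi)(-1)$ and then identify the integral against $\mu(H_\one)$ with evaluation at $-1$. The only (cosmetic) difference is that you verify $\int_{\Zp}\chi\,d\mu(H_\one) = \chi(-1)$ directly from $H_\one(\gfvar) = 1/(1+\gfvar)$ and the Mahler coefficients, whereas the paper obtains the same fact by writing $(\phi\star\psi)(-1) = ((\phi\star\psi)\star\one)(-1) = \pair{\phi\star\psi}{\one}$ via a second application of Proposition~\ref{star and pairing}.
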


\begin{proof}
\begin{align*}
\pair{\phi}{\psi} &= (\phi \star \psi)(-1) \eqcom{by Proposition~\ref{star and pairing}} \\
&= ((\phi \star \psi) \star \one)(-1) \\
&= \pair{\phi \star \psi}{\one} \eqcom{by the same proposition} \\
&= \int_{\Zp} (\phi \star \psi)\,d\mu(H_\one) \eqcom{by definition.}
\end{align*}
\end{proof}

\begin{theorem} \label{calS and star}
If $\phi,\psi \in \cfns{\Zp}{\Cp}$, then $\calS(\phi) \star \psi = \phi \star \calS(\psi)$.
\end{theorem}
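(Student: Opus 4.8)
The plan is to exploit the very definition of $\star$. By construction, the $\Cp$-linear bijection $T : \cfns{\Zp}{\Cp} \to \decayegf$ of (\ref{cfns decay corr 1}) carries $\star$ to ordinary multiplication of power series, its inverse being (\ref{cfns decay corr 2}). Since the Mahler coefficients of $\phi$ are precisely the numbers $(\nabla^n\phi)(0)$ — this is because $(\nabla^n\beta_m)(0) = \delta_{nm}$ — the map $T$ sends $\phi = \sum_n a_n\beta_n$ to $\sum_n a_n\gfvar^n/n!$. So it suffices to identify the operator on $\decayegf$ that corresponds to $\calS$ under $T$ and to observe that it commutes with multiplication.

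First I would invoke Proposition~\ref{calS of mahler}, which says that $\calS$ acts on Mahler coefficients by $a_n \mapsto a_n - na_{n-1}$ (with $a_{-1} = 0$). Transporting this through $T$, the operator becomes $\sum_n a_n\gfvar^n/n! \mapsto \sum_n (a_n - na_{n-1})\gfvar^n/n!$, and the one-line reindexing $\sum_{n\geq 1} na_{n-1}\gfvar^n/n! = \gfvar\sum_{m\geq 0} a_m\gfvar^m/m!$ shows this is exactly multiplication by $1 - \gfvar$. In other words, $T(\calS(\phi)) = (1-\gfvar)\,T(\phi)$ for every $\phi$. (One should note in passing that multiplication by $1-\gfvar$ does preserve $\decayegf$, which holds because $|na_{n-1}| \leq |a_{n-1}| \to 0$ in $\Cp$; this is anyway guaranteed by Theorem~\ref{calS theorem}.)

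The conclusion is then immediate: $T(\calS(\phi)\star\psi) = (1-\gfvar)T(\phi)T(\psi) = T(\phi)\bigl((1-\gfvar)T(\psi)\bigr) = T(\phi)T(\calS(\psi)) = T(\phi\star\calS(\psi))$, and since $T$ is injective we get $\calS(\phi)\star\psi = \phi\star\calS(\psi)$. I do not expect a genuine obstacle here; the only points requiring a little care are confirming that (\ref{cfns decay corr 1}) really is the ``divide the $n$th Mahler coefficient by $n!$'' map and pinning down that $\calS$ becomes multiplication by $1-\gfvar$ — both routine.

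As an alternative (or a cross-check) one can avoid $T$ entirely and verify the identity directly from the convolution formula (\ref{binom mahl conv}): writing out the $n$th Mahler coefficient of each side using Proposition~\ref{calS of mahler} and subtracting, the difference collapses to $-n\sum_{j=0}^{n-1}\ch{n-1}{j}a_j b_{n-1-j} + n\sum_{j=0}^{n-1}\ch{n-1}{j}a_j b_{n-1-j} = 0$ after applying the identities $\ch{n}{k+1}(k+1) = n\ch{n-1}{k}$ and $\ch{n}{k}(n-k) = n\ch{n-1}{k}$. I would present the generating-function argument as the main proof, since it is shorter and conceptually explains the identity (both sides are $1-\gfvar$ times the same product), and mention the combinatorial computation only briefly.
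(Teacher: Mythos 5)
Your main argument is correct but takes a genuinely different route from the paper. The paper proves the identity by brute force on Mahler coefficients: it expands both $\calS(\phi)\star\psi$ and $\phi\star\calS(\psi)$ via Proposition~\ref{calS of mahler} and the convolution formula, and reduces the claim to $\sum_{k}\ch{n}{k}ka_{k-1}b_{n-k}=\sum_{k}\ch{n}{k}ka_{n-k}b_{k-1}$, which it verifies by the substitution $k\mapsto n-k$, a reindexing, and the identity $\ch{n}{k-1}(n-k+1)=\ch{n}{k}k$ --- essentially the computation you sketch as your ``cross-check.'' You instead transport everything through the bijection $T$ onto $\decayegf$, observe from Proposition~\ref{calS of mahler} that $\calS$ becomes multiplication by $1-\gfvar$, and conclude from commutativity of power-series multiplication. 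This is clean and non-circular: the paper's own statement that $\calS$ corresponds to multiplication by $1-\gfvar$ appears only as a corollary \emph{of} Theorem~\ref{calS and star} (via Corollary~\ref{calS corr to 1 - beta_1}), but you derive that fact independently from Proposition~\ref{calS of mahler}, so you are not assuming what you want to prove; in effect you reverse the paper's logical order, establishing the $(1-\gfvar)$-correspondence first and reading off the theorem, Corollary~\ref{calS corr to 1 - beta_1}, and its successor all at once. What your approach buys is conceptual transparency (both sides are visibly $(1-\gfvar)T(\phi)T(\psi)$) at the cost of leaning on the well-definedness of $\star$ via $T$, which the paper has already established; the paper's computation is self-contained at the level of coefficients but opaque as to why the identity holds.
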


\begin{proof}
Suppose that $\phi = \sum_{k = 0}^\infty a_k\beta_k$ and $\psi = \sum_{l = 0}^\infty b_l\beta_l$. On the one hand,
\[ \calS(\phi) \star \psi = \sum_{n = 0}^\infty \left(\sum_{k = 0}^n \ch{n}{k}(a_k - ka_{k - 1})b_{n - k}\right)\beta_n \]
by Proposition~\ref{calS of mahler}, while on the other hand
\[ \phi \star \calS(\psi) = \sum_{n = 0}^\infty \left(\sum_{k = 0}^n \ch{n}{k}a_{n - k}(b_k - kb_{k - 1})\right)\beta_n .\]
Note that we have set $a_{-1} = b_{-1} = 0$. We have to show that
\[ \sum_{k = 0}^n \ch{n}{k} (a_k - ka_{k - 1})b_{n - k} = \sum_{k = 0}^n \ch{n}{k}a_{n - k}(b_k - kb_{k - 1}) .\]
Since $\sum_{k = 0}^n \ch{n}{k}a_k b_{n - k} = \sum_{k = 0}^n \ch{n}{k} a_{n - k}b_k$, it remains to show that
\[ \sum_{k = 0}^n \ch{n}{k}ka_{k - 1}b_{n - k} = \sum_{k = 0}^n \ch{n}{k}ka_{n - k}b_{k - 1} .\]
Now,
\begin{align*}
\sum_{k = 0}^n \ch{n}{k}ka_{k - 1}b_{n - k} &= \sum_{k = 0}^n \ch{n}{k}(n - k)a_{n - k - 1}b_k \eqcomb{$k \mapsto n - k$} \\
&= \sum_{k = 0}^{n - 1} \ch{n}{k}(n - k)a_{n - k - 1}b_k \eqcomb{the $k = n$ term is $0$} \\
&= \sum_{k = 1}^n \ch{n}{k - 1}(n - k + 1)a_{n - k}b_{k - 1} \eqcomb{reindexing} \\
&= \sum_{k = 1}^n \ch{n}{k}ka_{n - k}b_{k - 1} \eqcomb{a binomial identity} \\
&= \sum_{k = 0}^n \ch{n}{k}ka_{n - k}b_{k - 1} \eqcomb{the $k = 0$ term is $0$}[,]
\end{align*}
as desired.
\end{proof}

\begin{cor} \label{calS corr to 1 - beta_1}
If $\psi \in \cfns{\Zp}{\Cp}$, then
\begin{align*}
\calS(\psi) &= (\one - \beta_1) \star \psi ,\\
\text{and}\quad \calS^{-1}(\psi) &= \qf \star \psi .
\end{align*}
Therefore, $\calS^m(\psi) = (\one - \beta_1)^{\star m} \star \psi$ for all $m \in \Z$.
\end{cor}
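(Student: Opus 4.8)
The plan is to derive all three statements from Theorem~\ref{calS and star}, using the two easy identities $\calS(\one) = \one - \beta_1$ and $\calS(\qf) = \one$ together with the fact that $\one$ is the identity of $(\cfns{\Zp}{\Cp},+,\star)$. First I would verify $\calS(\one)$ directly from the definition of $\calS$: $\calS(\one)(x) = \one(x) - x\one(x-1) = 1 - x = \one(x) - \beta_1(x)$, using $\beta_1(x) = \ch{x}{1} = x$, so $\calS(\one) = \one - \beta_1$; and $\calS(\qf) = \one$ is just the definition $\qf = \calS^{-1}(\one)$ from Section~\ref{def qf}.

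For the first displayed equality, apply Theorem~\ref{calS and star} with $\phi = \one$: since $\one \star \calS(\psi) = \calS(\psi)$, it reads $(\one - \beta_1) \star \psi = \calS(\psi)$. Specializing this to $\psi = \qf$ gives $(\one - \beta_1) \star \qf = \calS(\qf) = \one$, so $\qf$ is a $\star$-inverse of $\one - \beta_1$; in particular $\one - \beta_1$ is a unit of $(\cfns{\Zp}{\Cp}, +, \star)$. The second displayed equality then follows for any $\psi$, because $\calS(\qf \star \psi) = (\one - \beta_1) \star (\qf \star \psi) = \bigl((\one - \beta_1) \star \qf\bigr) \star \psi = \one \star \psi = \psi$, using the first equality and the commutativity and associativity of $\star$ established in Section~\ref{sec: conv of mahl}; hence $\calS^{-1}(\psi) = \qf \star \psi$.

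For the last claim I would induct on $m$. The case $m = 0$ is trivial since $(\one - \beta_1)^{\star 0} = \one$; for $m \geq 0$ the inductive step is $\calS^{m+1}(\psi) = \calS\bigl((\one - \beta_1)^{\star m} \star \psi\bigr) = (\one - \beta_1) \star (\one - \beta_1)^{\star m} \star \psi = (\one - \beta_1)^{\star (m+1)} \star \psi$, by the first equality and associativity. For $m < 0$, by~(\ref{star power neg}) and the fact that $\qf$ is the $\star$-inverse of $\one - \beta_1$ we have $(\one - \beta_1)^{\star m} = \qf^{\star(-m)}$, and iterating $\calS^{-1}(\psi) = \qf \star \psi$ a total of $-m$ times — a downward induction identical in form to the $m \geq 0$ case with $\calS^{-1}$, $\qf$ in place of $\calS$, $\one - \beta_1$ — yields $\calS^{m}(\psi) = \qf^{\star(-m)} \star \psi = (\one - \beta_1)^{\star m} \star \psi$.

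There is essentially no obstacle here: the one point that must be stated explicitly is that $\one - \beta_1$ is genuinely invertible with respect to $\star$, which is exactly the identity $(\one - \beta_1) \star \qf = \one$ extracted above (so that the negative $\star$-powers in~(\ref{star power neg}) are defined); beyond that, the proof is routine bookkeeping with the commutative, associative, zero-divisor-free product $\star$.
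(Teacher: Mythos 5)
Your proof is correct and follows essentially the same route as the paper: both rest on Theorem~\ref{calS and star} applied with $\phi = \one$, together with $\calS(\one) = \one - \beta_1$, $\calS^{-1}(\one) = \qf$, and the resulting identity $(\one - \beta_1) \star \qf = \one$. The only cosmetic differences are that the paper obtains $\calS^{-1}(\psi) = \qf \star \psi$ by applying the adjointness identity directly to $\calS^{-1}$ rather than deducing it, as you do, from the first equality via the invertibility of $\one - \beta_1$, and that the paper leaves the induction for general $m \in \Z$ implicit, which you spell out.
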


\begin{proof}
\begin{align*}
\calS(\psi) &= \one \star \calS(\psi) = \calS(\one) \star \psi = (\one - \beta_1) \star \psi ,\\
\text{and}\quad \calS^{-1}(\psi) &= \one \star \calS^{-1}(\psi) = \calS^{-1}(\one) \star \psi = \qf \star \psi .
\end{align*}
For the last assertion, observe that $\qf \star (\one - \beta_1) = \calS^{-1}(\one) \star \calS(\one) = \one$.
\end{proof}

For example, if $r \in \lball{1}{1}[\Cp]$, then
\[ \gfn{r} = \calS^{-1}(g_r) = \qf \star g_r = \gfn{1} \star g_r .\]

\begin{cor}
Under the bijections in (\ref{cfns decay corr 1}) and (\ref{cfns decay corr 2}), the map $\calS : \cfns{\Zp}{\Cp} \ra \cfns{\Zp}{\Cp}$ corresponds to the map
\bea
\decayegf &\ra& \decayegf \\*
G(t) &\mapsto& (1 - \gfvar)G(t) .
\eea
\end{cor}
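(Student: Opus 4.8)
The plan is to read this off Corollary~\ref{calS corr to 1 - beta_1} together with the very definition of the product $\star$. First I would pin down exactly which power series the bijection in (\ref{cfns decay corr 1}) attaches to a function $\phi = \sum_{n=0}^\infty a_n\beta_n$: since $\nabla\beta_k = \beta_{k-1}$ for $k \geq 1$, $\beta_0(0) = 1$, and $\beta_k(0) = 0$ for $k \geq 1$, one has $(\nabla^n\phi)(0) = a_n$ for all $n$. Hence $\phi$ is sent to $\sum_{n=0}^\infty a_n\tfrac{\gfvar^n}{n!}$, the power series whose sequence of divided coefficients is the Mahler sequence of $\phi$; and, by the way $\star$ was defined, the map $\phi \mapsto \sum_n a_n\tfrac{\gfvar^n}{n!}$ is a $\Cp$-algebra isomorphism $(\cfns{\Zp}{\Cp},+,\star) \racan (\decayegf,+,\cdot)$.

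Next I would compute the images under this isomorphism of the two functions in Corollary~\ref{calS corr to 1 - beta_1}. The constant function $\one = \beta_0$ has Mahler sequence $(1,0,0,\dots)$, so it maps to the power series $1$, while $\beta_1$ has Mahler sequence $(0,1,0,0,\dots)$, so it maps to $\gfvar$; thus $\one - \beta_1$ maps to $1 - \gfvar$. Now let $\phi \in \cfns{\Zp}{\Cp}$ correspond to $G(\gfvar) \in \decayegf$. By Corollary~\ref{calS corr to 1 - beta_1}, $\calS(\phi) = (\one - \beta_1)\star\phi$, and since the isomorphism carries $\star$ to ordinary multiplication, $\calS(\phi)$ corresponds to $(1 - \gfvar)G(\gfvar)$, which is the claim.

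Alternatively, one can argue by a direct computation, bypassing Corollary~\ref{calS corr to 1 - beta_1}: by Proposition~\ref{calS of mahler} the Mahler sequence of $\calS(\phi)$ is $(a_n - na_{n-1})_{n\geq 0}$ (with $a_{-1} = 0$), and so $\calS(\phi)$ corresponds to $\sum_{n=0}^\infty (a_n - na_{n-1})\tfrac{\gfvar^n}{n!} = G(\gfvar) - \gfvar G(\gfvar)$, the reindexing $\sum_{n\geq 0} na_{n-1}\tfrac{\gfvar^n}{n!} = \gfvar\sum_{m\geq 0} a_m\tfrac{\gfvar^m}{m!}$ being the only manipulation needed.

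There is no genuine obstacle here; the corollary is a restatement of material already established, and the only points demanding a moment's attention are the bookkeeping identity $(\nabla^n\phi)(0) = a_n$, which identifies the power series attached to $\phi$, and — in the alternative route — the routine reindexing of the second sum.
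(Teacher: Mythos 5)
Your proof is correct and follows essentially the same route as the paper: identify $\one - \beta_1$ with $1 - \gfvar$ under the bijection and invoke Corollary~\ref{calS corr to 1 - beta_1} together with the fact that $\star$ was defined precisely so that the bijection is an algebra isomorphism. The extra verification that $(\nabla^n\phi)(0) = a_n$ and the alternative direct computation via Proposition~\ref{calS of mahler} are both sound but not needed beyond what the paper records.
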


\begin{proof}
The function $\one - \beta_1 \in \cfns{\Zp}{\Cp}$ corresponds to the element $1 - \gfvar \in \decayegf$. Now use Corollary~\ref{calS corr to 1 - beta_1}.
\end{proof}

\begin{cor}
A function $\phi \in \cfns{\Zp}{\Cp}$ is a unit with respect to $\star$ if and only if $\calS(\phi)$ is. Thus, $\calS$ restricts to a permutation of the unit group of the algebra $(\cfns{\Zp}{\Cp},+,\star)$.
\end{cor}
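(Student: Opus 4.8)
The plan is to deduce this immediately from Corollary~\ref{calS corr to 1 - beta_1}, which exhibits $\calS$ as multiplication by a $\star$-unit in the commutative ring $(\cfns{\Zp}{\Cp},+,\star)$. Concretely, that corollary tells us $\calS(\phi) = (\one - \beta_1) \star \phi$ for every $\phi$, and that $\one - \beta_1$ is a unit with $\star$-inverse $\qf$ (since $\qf \star (\one - \beta_1) = \one$, as noted in the proof of that corollary).

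The key step is then the general ring-theoretic observation that in a commutative ring with identity, multiplication by a fixed unit permutes the set of units. I would spell this out for our situation: suppose $\phi$ is a $\star$-unit, with $\star$-inverse $\phi^{-1}$. Then $(\one - \beta_1) \star \phi$ has $\star$-inverse $\qf \star \phi^{-1}$, because
\[ \bigl((\one - \beta_1) \star \phi\bigr) \star \bigl(\qf \star \phi^{-1}\bigr) = \bigl((\one - \beta_1) \star \qf\bigr) \star \bigl(\phi \star \phi^{-1}\bigr) = \one \star \one = \one ,\]
using commutativity and associativity of $\star$ together with $(\one - \beta_1) \star \qf = \one$. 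Hence $\calS(\phi)$ is a unit. Conversely, if $\calS(\phi) = (\one - \beta_1) \star \phi$ is a unit, then $\phi = \qf \star \calS(\phi)$ (by Corollary~\ref{calS corr to 1 - beta_1}, since $\calS^{-1}(\calS(\phi)) = \qf \star \calS(\phi)$) is a $\star$-product of two units, hence a unit.

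For the final sentence, I would combine this equivalence with the fact, already established in Theorem~\ref{calS theorem}, that $\calS$ is a bijection on all of $\cfns{\Zp}{\Cp}$: it therefore maps the unit group injectively into itself, and the converse direction just shown gives surjectivity onto the unit group (any unit $\eta$ is $\calS$ of the unit $\calS^{-1}(\eta) = \qf \star \eta$). So $\calS$ restricts to a bijection, i.e.\ a permutation, of the unit group of $(\cfns{\Zp}{\Cp},+,\star)$. I do not anticipate any real obstacle here; the only thing worth stating carefully is the interplay of commutativity and associativity of $\star$ in the displayed cancellation, everything else being formal.
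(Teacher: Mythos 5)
Your argument is correct, and it is essentially the paper's: the paper proves the corollary by applying Theorem~\ref{calS and star} directly (if $\calS(\phi)\star\psi=\one$ then $\phi\star\calS(\psi)=\one$, and if $\phi\star\psi=\one$ then $\calS(\phi)\star\calS^{-1}(\psi)=\one$), whereas you route through Corollary~\ref{calS corr to 1 - beta_1}, which is itself an immediate consequence of that theorem, and then invoke the standard fact that multiplication by the fixed unit $\one-\beta_1$ (with $\star$-inverse $\qf$) permutes the unit group. The two proofs differ only in packaging; yours is slightly longer but equally sound.
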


\begin{proof}
If $\calS(\phi) \star \psi = \one$, then $\phi \star \calS(\psi) = \one$. Conversely, if $\phi \star \psi = \one$, then $\calS(\phi) \star \calS^{-1}(\psi) = \one$.
\end{proof}

\begin{cor} \label{calS has infinite order}
If $\psi \in \cfns{\Zp}{\Cp}$ and $\calS^m(\psi) = \psi$ for some non-zero $m \in \Z$, then $\psi = 0$.
\end{cor}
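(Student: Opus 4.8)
The plan is to deduce the statement from Corollary~\ref{calS corr to 1 - beta_1} together with the absence of zero divisors in $(\cfns{\Zp}{\Cp},+,\star)$. By that corollary, $\calS^m(\psi) = (\one - \beta_1)^{\star m} \star \psi$ for every $m \in \Z$, so the hypothesis $\calS^m(\psi) = \psi$ is equivalent to
\[ \bigl((\one - \beta_1)^{\star m} - \one\bigr) \star \psi = 0 ,\]
using that $\one$ is the identity for $\star$. Since $(\cfns{\Zp}{\Cp},+,\star)$ has no zero divisors, it therefore suffices to prove that $(\one - \beta_1)^{\star m} \ne \one$ whenever $m \ne 0$; the displayed identity then forces $\psi = 0$.

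To establish that inequality I would pass through the algebra isomorphism $(\cfns{\Zp}{\Cp},+,\star) \iso (\decayegf,+,\cdot)$ of (\ref{cfns decay corr 1})--(\ref{cfns decay corr 2}). Under it, $\one - \beta_1$ corresponds to $1 - \gfvar$ and $\one$ to $1$ (as already noted in the proof of Corollary~\ref{calS corr to 1 - beta_1}), so $(\one - \beta_1)^{\star m}$ corresponds to $(1 - \gfvar)^m \in \decayegf$. For $m < 0$ this uses that $1 - \gfvar$ is a unit of $\decayegf$, with inverse $\sum_{n \geq 0} \gfvar^n = \sum_{n \geq 0} n!\,\tfrac{\gfvar^n}{n!}$, the convergence $|n!| \to 0$ being exactly what places the inverse in $\decayegf$ (equivalently, $\qf \star (\one - \beta_1) = \one$, as in the cited corollary). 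It then remains to observe that the coefficient of $\gfvar$ in $(1 - \gfvar)^m$ equals $-m$, which is nonzero in $\Cp$ because $m$ is a nonzero integer and $\Cp$ has characteristic zero. Hence $(1 - \gfvar)^m \ne 1$, so $(\one - \beta_1)^{\star m} \ne \one$, completing the argument.

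I do not anticipate any real difficulty; the only step needing a moment's thought is the treatment of negative exponents $m$, where one must confirm that $(\one - \beta_1)^{\star m}$ is well defined and still matches $(1 - \gfvar)^m$ — which is immediate from the invertibility of $1 - \gfvar$ in $\decayegf$. One could instead reduce to the case $m > 0$ at the outset by applying $\calS^{-m}$ to both sides of $\calS^m(\psi) = \psi$, but the power-series computation above is uniform in the sign of $m$, so this reduction is unnecessary.
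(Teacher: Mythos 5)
Your proof is correct and follows essentially the same route as the paper: reduce via Corollary~\ref{calS corr to 1 - beta_1} to $((\one - \beta_1)^{\star m} - \one) \star \psi = 0$, invoke the absence of zero divisors, and note that $1 - \gfvar$ has infinite multiplicative order in $\decayegf$. Your explicit justification of that last point (the coefficient of $\gfvar$ in $(1-\gfvar)^m$ is $-m \ne 0$, valid for either sign of $m$) is a welcome bit of extra detail the paper leaves implicit.
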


\begin{proof}
By Corollary~\ref{calS corr to 1 - beta_1}, the equality $\calS^m(\psi) = \psi$ says that $(\one - \beta_1)^{\star m} \star \psi = \psi$, \ie $((\one - \beta_1)^{\star m} - \one) \star \psi = 0$. But $\one - \beta_1$ corresponds to $1 - \gfvar \in \decayegf$, which has infinite multiplicative order, so because $m \not= 0$, $(\one - \beta)^{\star m} - \one \not= 0$. Hence, because $(\cfns{\Zp}{\Cp},+,\star)$ has no zero divisors, $\psi = 0$.
\end{proof}

\begin{remark}
Note that Theorem~\ref{calS and star} and Proposition~\ref{star and pairing} combine to give an alternative proof of Theorem~\ref{pairing is self-adj}, the self-adjointness of $\calS$ with respect to $\pair{\cdot}{\cdot}$.
\end{remark}

\section{An integral transform} \label{sec: integral transform}

Recall the notation $\phi^{\star m}$ defined in (\ref{star power non-neg}) and (\ref{star power neg}) for $\phi \in \cfns{\Zp}{\Cp}$ and $m \in \Z$. Given $\psi \in \cfns{\Zp}{\Cp}$, we may consider the function
\beal
\Z &\ra& \Cp \nonumber \\*
m &\mapsto& \int_{\Zp} (\one - \beta_1)^{\star m}\,d\mu(H_\psi) . \label{inttrans as integral: initial def}
\eeal
We show that this function, defined initially on $\Z$, can be extended to a locally analytic function $\Zp \ra \Cp$.

\subsection{Iterates of $\calS$} \label{iterates}

\begin{prop} \label{prop: iterates of calS}
If $\psi \in \cfns{\Zp}{\Cp}$, $m \in \Z_{\geq 0}$, and $x \in \Zp$, then
\[ \calS^m(\psi)(x) = \sum_{k = 0}^m (-1)^k\ch{m}{k}(x)_k\psi(x - k) .\]
\end{prop}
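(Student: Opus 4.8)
The plan is to write $\calS$ as $\id - M$, where $\id$ denotes the identity operator on $\cfns{\Zp}{\Cp}$ and $M$ is the operator sending a function $\psi$ to the function $x \mapsto x\psi(x - 1)$; note that $M$ does indeed map continuous functions to continuous functions, so $M$ is a well-defined $\Cp$-linear operator on $\cfns{\Zp}{\Cp}$, and $\calS = \id - M$ by the very definition of $\calS$.

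First I would establish, by a short induction on $k \geq 0$, that $(M^k\psi)(x) = (x)_k\psi(x - k)$ for all $x \in \Zp$. The base case $k = 0$ holds because $(x)_0 = 1$ is the empty product, and the inductive step is
\[ (M^{k + 1}\psi)(x) = x\,(M^k\psi)(x - 1) = x\,(x - 1)_k\,\psi(x - 1 - k) = (x)_{k + 1}\psi(x - (k + 1)) ,\]
using the elementary identity $x(x - 1)_k = (x)_{k + 1}$. Next, since $\id$ commutes with $M$, the binomial theorem for commuting operators gives the finite-sum identity
\[ \calS^m = (\id - M)^m = \sum_{k = 0}^m (-1)^k\ch{m}{k}M^k \]
of operators on $\cfns{\Zp}{\Cp}$. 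Applying both sides to $\psi$, evaluating at $x \in \Zp$, and substituting the formula for $(M^k\psi)(x)$ from the previous step yields exactly
\[ \calS^m(\psi)(x) = \sum_{k = 0}^m (-1)^k\ch{m}{k}(x)_k\psi(x - k) ,\]
as claimed.

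There is essentially no obstacle here: the only points to verify are the elementary identity $x(x - 1)_k = (x)_{k + 1}$ and the legitimacy of the binomial expansion, which is immediate because $\id$ and $M$ commute and the expansion is a finite sum. (Alternatively, one could argue directly by induction on $m$, using $\calS^{m + 1}(\psi)(x) = \calS^m(\psi)(x) - x\,\calS^m(\psi)(x - 1)$, the substitution $x(x - 1)_k = (x)_{k + 1}$, a reindexing of the resulting second sum, and Pascal's rule $\ch{m}{k} + \ch{m}{k - 1} = \ch{m + 1}{k}$; this avoids introducing $M$ at the cost of a little more bookkeeping.)
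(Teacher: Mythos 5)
Your proof is correct. The paper argues by direct induction on $m$: it applies $\calS$ to the claimed formula for $\calS^m(\psi)$, uses $x(x-1)_k = (x)_{k+1}$, reindexes, and combines terms with Pascal's rule $\ch{m}{k} + \ch{m}{k-1} = \ch{m+1}{k}$ --- exactly the alternative you sketch in your final parenthetical. Your main argument instead factors the computation through the decomposition $\calS = \id - M$ with $(M\psi)(x) = x\psi(x-1)$, reduces the work to the much simpler induction $(M^k\psi)(x) = (x)_k\psi(x-k)$, and lets the binomial theorem for commuting operators produce the signs and binomial coefficients. The two routes are combinatorially equivalent (the operator binomial theorem is itself Pascal's rule in disguise), but yours cleanly separates the ``falling-factorial'' content from the ``binomial'' content and makes the structural reason for the formula transparent, at the cost of introducing the auxiliary operator $M$; the paper's version is self-contained but buries the same identity inside a longer chain of reindexings. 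All the steps you flag as needing verification ($x(x-1)_k = (x)_{k+1}$, the legitimacy of the finite binomial expansion for the commuting operators $\id$ and $M$ on $\cfns{\Zp}{\Cp}$) do indeed hold, so there is no gap.
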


\begin{proof}
We proceed by induction on $m$, the case $m = 0$ being vacuous. Let $m \geq 0$, and assume that the statement is true for that $m$, \ie
\[ \calS^m(\psi)(y) = \sum_{k = 0}^m (-1)^k\ch{m}{k}(y)_k\psi(y - k) \eqcom{for all $y \in \Zp$.} \]
Then
\begin{align*}
\goinvisible &=\govisible \calS^{m + 1}(\psi)(x) \\
&= \calS(\calS^m(\psi))(x) \\
&= \calS\left(y \mapsto \sum_{k = 0}^m (-1)^k\ch{m}{k}(y)_k\psi(y - k)\right)(x) \\
&= \sum_{k = 0}^m (-1)^k\ch{m}{k}(x)_k\psi(x - k) - x\sum_{k = 0}^m (-1)^k\ch{m}{k}(x - 1)_k\psi(x - 1 - k) \\
&= \sum_{k = 0}^m (-1)^k\ch{m}{k}(x)_k\psi(x - k) + \sum_{k = 0}^m (-1)^{k + 1}\ch{m}{k}(x)_{k + 1}\psi(x - (k + 1)) \\
&= \sum_{k = 0}^m (-1)^k\ch{m}{k}(x)_k\psi(x - k) + \sum_{k = 1}^{m + 1} (-1)^k\ch{m}{k - 1}(x)_k\psi(x - k) \\
&= \psi(x) + \sum_{k = 1}^m (-1)^k\left(\ch{m}{k} + \ch{m}{k - 1}\right)(x)_k\psi(x - k) \\*
\goinvisible &=\govisible {} + (-1)^{m + 1}(x)_{m + 1}\psi(x - (m + 1)) \\
&= \sum_{k = 0}^{m + 1} (-1)^k\ch{m + 1}{k}(x)_k\psi(x - k) ,
\end{align*}
where in the last step we have used a standard binomial identity and reincorporated the $k = 0$ and $k = m + 1$ terms. The induction is complete.
\end{proof}

\begin{cor} \label{calS iterated: eval at -1}
If $\psi \in \cfns{\Zp}{\Cp}$ and $m \in \Z_{\geq 0}$, then
\[ \calS^m(\psi)(-1) = \sum_{k = 0}^m (m)_k\psi(-1 - k) .\]
\end{cor}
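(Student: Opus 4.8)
The plan is to obtain this immediately from Proposition~\ref{prop: iterates of calS} by evaluating at the specific point $x = -1$. Setting $x = -1$ in the formula $\calS^m(\psi)(x) = \sum_{k=0}^m (-1)^k \ch{m}{k}(x)_k\psi(x-k)$ gives
\[ \calS^m(\psi)(-1) = \sum_{k = 0}^m (-1)^k \ch{m}{k}(-1)_k\,\psi(-1 - k) ,\]
so the only thing left is to simplify the coefficient $(-1)^k \ch{m}{k}(-1)_k$.

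Next I would compute $(-1)_k$ directly from the definition of the falling factorial: $(-1)_k = (-1)(-2)(-3)\cdots(-k) = (-1)^k k!$. Substituting this in, the coefficient becomes $(-1)^k \ch{m}{k}(-1)^k k! = \ch{m}{k}k! = \dfrac{m!}{(m-k)!} = (m)_k$, using that $k \leq m$ throughout the sum. This yields exactly $\calS^m(\psi)(-1) = \sum_{k=0}^m (m)_k\,\psi(-1-k)$, as claimed.

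There is essentially no obstacle here: the corollary is a direct specialization of the preceding proposition combined with the elementary identity $(-1)_k = (-1)^k k!$. The only point worth a word of care is that the sum is finite (it runs only to $k = m$), so all manipulations of the binomial coefficients and falling factorials are term-by-term and require no convergence considerations.
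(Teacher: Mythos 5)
Your proposal is correct and coincides with the paper's own proof, which likewise sets $x = -1$ in Proposition~\ref{prop: iterates of calS} and uses the identity $(-1)_k = (-1)^k\,k!$. The additional simplification $(-1)^k\ch{m}{k}(-1)^k k! = \ch{m}{k}k! = (m)_k$ is exactly the intended step.
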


\begin{proof}
Take $x = -1$ in Proposition~\ref{prop: iterates of calS} and use the fact that $(-1)_k = (-1)^k\,k!$.
\end{proof}

\bex
Let $r \in \lball{1}{1}[\Cp]$. If we take $\psi = \gfn{r}$ in Corollary~\ref{calS iterated: eval at -1}, then we obtain
\[ \sum_{k = 0}^m (m)_k\gfn{r}(-1 - k) = \sum_{k = 0}^{m - 1} (m - 1)_k r^{-1 - k} \]
for all $m \in \Z_{\geq 1}$. Indeed, $\calS^m(\gfn{r}) = \calS^{m - 1}(g_r)$, and $g_r(x) = r^x$.

Specializing to $r = 1$ gives
\[ \sum_{k = 0}^m (m)_k\qf(-1 - k) = \qf(m - 1) \eqcomb{$m \geq 1$}[,] \]
because $\qf(x) = \sum_{n = 0}^\infty (x)_n$.
\eex

If $\psi \in \cfns{\Zp}{\Cp}$, then $\psi(\Zp)$ is bounded, so the sequence $(n!\,\psi(-1 - n))_{n \geq 0}$ converges to zero, and so we have a well-defined, continuous function
\beal
\inttrans{\psi} : \Zp &\ra& \Cp \nonumber \\*
\trvar &\mapsto& \sum_{n = 0}^\infty n!\,\psi(-1 - n)\ch{\trvar}{n} , \label{def of inttrans}
\eeal
that is, $\inttrans{\psi} = \sum_{n = 0}^\infty n!\,\psi(-1 - n)\beta_n$.

\begin{prop}
Let $\psi : \Zp \ra \Cp$ be continuous. Then $\inttrans{\psi}$ is locally analytic.
\end{prop}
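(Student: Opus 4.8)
The plan is to read the Mahler coefficients of $\inttrans{\psi}$ straight off its definition and then apply Amice's criterion in the form of Proposition~\ref{amice variant}. From (\ref{def of inttrans}), the $n$th Mahler coefficient of $\inttrans{\psi}$ is $a_n = n!\,\psi(-1-n)$ (we already noted before the proposition that these converge to $0$, so $\inttrans{\psi}$ is at least continuous). In the notation of Proposition~\ref{amice variant} this means $a_n' = a_n/n! = \psi(-1-n)$, so it is the primed form of that proposition that I would use, since it has already absorbed the factor $n!$.

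The only input about $\psi$ needed is that it is bounded: $\Zp$ is compact and $\psi$ continuous, so $\psi(\Zp)$ is bounded, and hence there is a real constant $C \geq 0$ with $v_p(\psi(x)) \geq -C$ for all $x \in \Zp$; in particular $v_p(a_n') \geq -C$ for every $n \geq 0$. Now fix any $\beta$ with $-\tfrac{1}{p-1} < \beta < 0$ (this interval is non-empty) and set $N = \lceil C/(-\beta)\rceil$. Then for all $n \geq N$ we have $n\beta \leq -C \leq v_p(a_n')$, which is exactly the hypothesis of the primed statement of Proposition~\ref{amice variant}, so $\inttrans{\psi}$ is locally analytic.

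There is essentially no obstacle here: the proof is short precisely because the factor $n!$ in the definition of $\inttrans{\psi}$ is calibrated to the borderline growth rate in Amice's criterion — it contributes $v_p(n!)\sim n/(p-1)$ to the valuation of the Mahler coefficients — while the remaining bounded factor $\psi(-1-n)$ has valuation bounded below by a constant and so cannot destroy local analyticity. The only point requiring a little care is to invoke the version of the criterion that already accounts for the $n!$, rather than estimating $v_p(n!)$ by hand.
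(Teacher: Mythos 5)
Your proof is correct and follows essentially the same route as the paper: both read off $a_n' = \psi(-1-n)$, use compactness of $\Zp$ to bound the valuation below by a constant, and then pick a negative $\beta > -\tfrac{1}{p-1}$ so that the constant bound eventually dominates $n\beta$, invoking the primed form of Proposition~\ref{amice variant}. The only difference is cosmetic: you make the choice of $N$ explicit where the paper merely asserts its existence.
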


\begin{proof}
We apply Proposition~\ref{amice variant} (in the $a_n'$ notation). By the boundedness of $\psi(\Zp)$, we may choose $C \in \R$ such that $v_p(\psi(-1 - n)) \geq C$ for all $n \geq 0$. Of course, there exist $\beta > -1/(p - 1)$ and $N \geq 1$ such that $C/n \geq \beta$ for all $n \geq N$. Hence, for such $n$,
\[ \frac{v_p(\psi(-1 - n))}{n} \geq \frac{C}{n} \geq \beta ,\]
and we are done.
\end{proof}

\begin{remark}
The map $\inttmap : \cfns{\Zp}{\Cp} \ra \lan{\Zp}{\Cp}$ is injective. Indeed, $\inttrans{\psi}$ determines its own Mahler coefficients and therefore the values $\psi(-1 - n)$ for all $n \geq 0$. By continuity, these values determine the function $\psi$.
\end{remark}

\begin{theorem} \label{power of S and inttrans}
Let $\psi \in \cfns{\Zp}{\Cp}$. The function
\bea
\Z &\ra& \Cp \\
m &\mapsto& \calS^m(\psi)(-1)
\eea
has a unique extension to a continuous function $\Zp \ra \Cp$. This continuous extension is in fact $\inttrans{\psi}$ and is therefore locally analytic on $\Zp$.
\end{theorem}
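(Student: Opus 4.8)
The plan is to take the continuous function $\inttrans{\psi}$---already shown to be locally analytic in the proposition immediately preceding the theorem---as the candidate extension, and to prove that $\calS^m(\psi)(-1) = \inttrans{\psi}(m)$ for every $m \in \Z$. Since $\Z_{\geq 0}$, and hence $\Z$, is dense in $\Zp$, any continuous extension of $m \mapsto \calS^m(\psi)(-1)$ is automatically unique, so this identification is the whole substance of the statement, and local analyticity then comes for free.

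For $m \in \Z_{\geq 0}$ the identity is immediate from Corollary~\ref{calS iterated: eval at -1}: there $\calS^m(\psi)(-1) = \sum_{k = 0}^m (m)_k\psi(-1 - k)$, and since $(m)_k = k!\ch{m}{k}$ while $\ch{m}{k} = 0$ for $k > m$ when $m$ is a non-negative integer, the sum extends to $\sum_{k = 0}^\infty k!\,\psi(-1 - k)\ch{m}{k}$, which is exactly $\inttrans{\psi}(m)$ by the definition~(\ref{def of inttrans}) of $\inttmap$. So the only real work is to handle negative $m$.

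For that, I would first establish the functional equation $\inttmap \circ \calS = \shiftu \circ \inttmap$ on $\cfns{\Zp}{\Cp}$, that is,
\[ \inttrans{\calS(\chi)}(s) = \inttrans{\chi}(s + 1) \quad\text{for all } \chi \in \cfns{\Zp}{\Cp} \text{ and } s \in \Zp. \]
This is a short Mahler-coefficient computation: writing $c_n = n!\,\chi(-1 - n)$, so that $\inttrans{\chi} = \sum_n c_n\beta_n$, the relation $\calS(\chi)(x) = \chi(x) - x\chi(x - 1)$ gives $n!\,\calS(\chi)(-1 - n) = c_n + c_{n + 1}$, whence $\inttrans{\calS(\chi)} = \sum_n (c_n + c_{n + 1})\beta_n = \inttrans{\chi} + \nabla(\inttrans{\chi})$, using $\nabla\beta_n = \beta_{n - 1}$; and $(g + \nabla g)(s) = g(s + 1)$ for any $g$. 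Since $\calS$ is an automorphism of $\cfns{\Zp}{\Cp}$ by Theorem~\ref{calS theorem} and $\shiftu$ is a bijection of $\cfns{\Zp}{\Cp}$, the functional equation iterates to $\inttmap \circ \calS^m = \shiftu^m \circ \inttmap$ for every $m \in \Z$. Evaluating at $0$ and using that $\inttrans{\chi}(0) = \chi(-1)$---immediate from~(\ref{def of inttrans}), since only the $n = 0$ term survives---we obtain $\calS^m(\psi)(-1) = \inttrans{\calS^m(\psi)}(0) = \shiftu^m(\inttrans{\psi})(0) = \inttrans{\psi}(m)$ for all $m \in \Z$, as required.

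I expect the functional equation $\inttrans{\calS(\chi)}(s) = \inttrans{\chi}(s + 1)$ to be the one place where care is needed: the index shift $c_n \mapsto c_{n + 1}$ in the Mahler coefficients and the recognition that $\sum_n (c_n + c_{n + 1})\beta_n$ equals $g + \nabla g$ must be tracked precisely (along with the routine check that $c_n + c_{n+1} \to 0$, so that everything is well defined). Beyond that the argument is purely formal, together with the density of $\Z$ in $\Zp$.
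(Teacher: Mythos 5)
Your proposal is correct and follows the same skeleton as the paper's proof: the case $m \geq 0$ via Corollary~\ref{calS iterated: eval at -1}, then the shift identity $\inttrans{\calS(\chi)}(s) = \inttrans{\chi}(s+1)$, iterated through the bijectivity of $\calS$ to reach negative $m$ upon evaluating at $s = 0$. The one point of divergence is how the shift identity is obtained: the paper deduces it from the already-proved equality $\calS^m(\psi)(-1) = \inttrans{\psi}(m)$ on $\Z_{\geq 0}$ by density of $\Z_{\geq 0}$ in $\Zp$, whereas you verify it directly on Mahler coefficients via $n!\,\calS(\chi)(-1-n) = c_n + c_{n+1}$ and $g + \nabla g = \shiftu(g)$; your computation is correct (and self-contained enough that it would in fact re-derive the $m \geq 0$ case as well), so this is a legitimate, slightly more computational variant of the same argument.
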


\begin{proof}
Uniqueness is clear by continuity, so we turn immediately to existence. By Corollary~\ref{calS iterated: eval at -1}, if $m \in \Z_{\geq 0}$, then
\[ \calS^m(\psi)(-1) = \sum_{n = 0}^m n!\,\psi(-1 - n)\ch{m}{n} = \sum_{n = 0}^\infty n!\,\psi(-1 - n)\ch{m}{n} = \inttrans{\psi}(m) .\]
In summary,
\beq \label{spwr first key step}
\calS^m(\psi)(-1) = \inttrans{\psi}(m) \eqcom{for all $\psi \in \cfns{\Zp}{\Cp}$ and all $m \in \Z_{\geq 0}$.}
\eeq
It remains to show that the equality $\calS^m(\psi)(-1) = \inttrans{\psi}(m)$ holds for $m \in \Z_{< 0}$ as well.

For the time being, we stay with $m \geq 0$, and in fact consider the situation where $m \geq 1$. In that case,
\begin{align*}
\calS^m(\psi)(-1) &= \calS^{m - 1}(\calS(\psi))(-1) ,\\
\eeie \inttrans{\psi}(m) &= \inttrans{\calS(\psi)}(m - 1)
\end{align*}
by (\ref{spwr first key step}) applied to both $(\psi,m)$ and $(\calS(\psi),m - 1)$. This being true for all $m \in \Z_{\geq 1}$, it follows by the continuity of $\inttrans{\psi}$ and $\inttrans{\calS(\psi)}$ that
\beq \label{spwr second key step}
\inttrans{\psi}(\trvar) = \inttrans{\calS(\psi)}(\trvar - 1) \eqcom{for all $\psi \in \cfns{\Zp}{\Cp}$ and $\trvar \in \Zp$.}
\eeq
Applying (\ref{spwr second key step}) with $\psi$ replaced by $\calS^{-1}(\psi)$, we obtain
\beq \label{spwr third key step}
\inttrans{\calS^{-1}(\psi)}(\trvar) = \inttrans{\psi}(\trvar - 1) \eqcom{for all $\psi \in \cfns{\Zp}{\Cp}$ and $\trvar \in \Zp$.}
\eeq
Iterating (\ref{spwr third key step}), \ie invoking induction, we arrive at
\beq \label{spwr fourth key step}
\inttrans{\calS^{-m}(\psi)}(\trvar) = \inttrans{\psi}(\trvar - m) \eqcom{for all $\psi \in \cfns{\Zp}{\Cp}$, $m \in \Z_{\geq 0}$, and $\trvar \in \Zp$.}
\eeq
Finally, taking $\trvar = 0$ in (\ref{spwr fourth key step}) gives $\inttrans{\calS^{-m}(\psi)}(0) = \inttrans{\psi}(-m)$, and since $\inttrans{\calS^{-m}(\psi)}(0) = \calS^{-m}(\psi)(-1)$ by definition, we have
\[ \calS^{-m}(\psi)(-1) = \inttrans{\psi}(-m) \eqcomb{$m \geq 0$}[,] \]
which provides the desired $m < 0$ case of (\ref{spwr first key step}).
\end{proof}

\begin{cor} \label{inttrans: calS and shiftu}
If $\psi \in \cfns{\Zp}{\Cp}$, $m \in \Z$, and $\trvar \in \Zp$, then
\[ \inttrans{\calS^m(\psi)}(\trvar) = \inttrans{\psi}(\trvar + m) .\]
\end{cor}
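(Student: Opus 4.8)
The plan is to derive this corollary directly from the chain of identities already established inside the proof of Theorem~\ref{power of S and inttrans}; the corollary is essentially a repackaging of equation~(\ref{spwr fourth key step}). Recall that~(\ref{spwr fourth key step}) asserts
\[ \inttrans{\calS^{-m}(\psi)}(\trvar) = \inttrans{\psi}(\trvar - m) \]
for all $\psi \in \cfns{\Zp}{\Cp}$, all $m \in \Z_{\geq 0}$, and all $\trvar \in \Zp$. The key point to exploit is that $\calS$ is an automorphism (Theorem~\ref{calS theorem}), so $\calS^m$ is defined for every $m \in \Z$ and $\calS^{-m} \of \calS^m = \id$, and moreover~(\ref{spwr fourth key step}) was proved for \emph{arbitrary} continuous $\psi$, hence remains valid after substituting any continuous function in place of $\psi$.

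First I would treat the case $m \leq 0$. Writing $m = -\ell$ with $\ell \in \Z_{\geq 0}$, equation~(\ref{spwr fourth key step}) reads $\inttrans{\calS^{m}(\psi)}(\trvar) = \inttrans{\psi}(\trvar + m)$, which is exactly the claimed identity.

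Next, for $m \geq 0$, I would apply~(\ref{spwr fourth key step}) with $\psi$ replaced by $\calS^m(\psi)$, obtaining
\[ \inttrans{\calS^{-m}(\calS^m(\psi))}(\trvar) = \inttrans{\calS^m(\psi)}(\trvar - m) . \]
The left-hand side collapses to $\inttrans{\psi}(\trvar)$ because $\calS^{-m} \of \calS^m = \id$, so $\inttrans{\psi}(\trvar) = \inttrans{\calS^m(\psi)}(\trvar - m)$ for all $\trvar \in \Zp$; replacing $\trvar$ by $\trvar + m$ yields $\inttrans{\calS^m(\psi)}(\trvar) = \inttrans{\psi}(\trvar + m)$. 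Together with the previous paragraph, this establishes the corollary for all $m \in \Z$.

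There is essentially no obstacle here: all the substance is contained in the earlier argument, and the only things to watch are the sign bookkeeping and the (legitimate) substitution of $\calS^{\pm m}(\psi)$ into identities proved for arbitrary $\psi \in \cfns{\Zp}{\Cp}$. A slightly different but equally short route would be an induction on $|m|$, with base cases $m = \pm 1$ supplied by~(\ref{spwr second key step}) and~(\ref{spwr third key step}) and inductive step $\calS^{m \pm 1} = \calS^{\pm 1} \of \calS^m$ (applying the $m = \pm 1$ case to the function $\calS^m(\psi)$); I would mention this only in passing.
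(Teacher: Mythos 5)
Your proposal is correct and follows essentially the same route as the paper: the $m \leq 0$ case is read off directly from~(\ref{spwr fourth key step}), and the $m \geq 0$ case is obtained by substituting $\calS^m(\psi)$ into~(\ref{spwr fourth key step}) and then shifting the variable by $m$. The sign bookkeeping checks out, so nothing further is needed.
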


\begin{proof}
This is a corollary of the proof of the theorem. The case $m \leq 0$ is provided by (\ref{spwr fourth key step}). Now let $m \in \Z_{\geq 0}$, and apply (\ref{spwr fourth key step}) with $\psi$ replaced by $\calS^m(\psi)$. This gives
\[ \inttrans{\psi}(\trvar) = \inttrans{\calS^m(\psi)}(\trvar - m) \eqcom{for all $\trvar \in \Zp$.} \]
Replacing $\trvar$ by $\trvar + m$ gives the desired equality.
\end{proof}

For a continuous function $\psi : \Zp \ra \Cp$, let $\sumf\psi : \Zp \ra \Cp$ be its indefinite sum function, \ie the unique continuous function with the property that
\[ (\sumf\psi)(n) = \sum_{k = 0}^{n - 1} \psi(k) \]
for all $n \in \Z_{\geq 0}$. (See \cite[Chap.~4, Sect.~1.5]{robert:p-adic} and \cite[Chap.~4, Sect.~2.5]{robert:p-adic}.) Note that $(\sumf\psi)(0) = 0$ and that $\nabla\sumf\psi = \psi$.

\begin{prop} \label{sumf and inttmap}
If $\Psi \in \im(\inttmap)$, then $\calS^{-1}(\Psi) \in \im(\inttmap)$. Specifically,
\[ \calS^{-1}(\inttrans{\psi}) = \inttrans{-\sumf\psi} \]
for all $\psi \in \cfns{\Zp}{\Cp}$.
\end{prop}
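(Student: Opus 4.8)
Since every element of $\im(\inttmap)$ is of the form $\inttrans{\psi}$, the first assertion is a consequence of the displayed formula, so it suffices to prove the latter. The plan is to compare Mahler coefficients of the two sides.

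Recall from (\ref{def of inttrans}) that $\inttrans{\psi} = \sum_{n = 0}^\infty n!\,\psi(-1 - n)\beta_n$, the coefficients $n!\,\psi(-1 - n)$ tending to $0$ because $\psi(\Zp)$ is bounded; likewise $\inttrans{-\sumf\psi} = \sum_{n = 0}^\infty \bigl(-n!\,(\sumf\psi)(-1 - n)\bigr)\beta_n$, the indefinite-sum function $\sumf\psi$ being continuous. First I would apply the formula (\ref{formula for preimage}) of Theorem~\ref{calS theorem} with $\rfn = \inttrans{\psi}$, so that $b_k = k!\,\psi(-1 - k)$, obtaining that $\calS^{-1}(\inttrans{\psi})$ has $n$th Mahler coefficient
\[ \sum_{k = 0}^n \frac{n!}{k!}\,k!\,\psi(-1 - k) = n!\sum_{k = 0}^n \psi(-1 - k) .\]
Comparing coefficients, the proposition is then equivalent to the identity
\beq \label{sumf at neg int}
(\sumf\psi)(-1 - n) = -\sum_{k = 0}^n \psi(-1 - k) \eqcomb{$n \geq 0$}[.]
\eeq

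Finally I would prove (\ref{sumf at neg int}) by induction on $n$. The relation $\nabla\sumf\psi = \psi$, valid on all of $\Zp$, gives $(\sumf\psi)(x) = (\sumf\psi)(x + 1) - \psi(x)$ for every $x \in \Zp$; together with $(\sumf\psi)(0) = 0$ this yields the case $n = 0$, namely $(\sumf\psi)(-1) = -\psi(-1)$, and the inductive step is immediate from $(\sumf\psi)(-2 - n) = (\sumf\psi)(-1 - n) - \psi(-2 - n)$.

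The computation is essentially bookkeeping, and I expect no serious obstacle; the one point meriting a moment's care is the observation that the defining recurrence of $\sumf\psi$, which is a priori imposed only at non-negative integers, in fact holds throughout $\Zp$ by continuity, so that it may be ``run backwards'' to evaluate $\sumf\psi$ at the negative integers. Alternatively, one could compute the $n$th Mahler coefficient of $\calS^{-1}(\inttrans{\psi})$ from Corollary~\ref{calS corr to 1 - beta_1}, via $\calS^{-1}(\inttrans{\psi}) = \qf \star \inttrans{\psi}$ and the fact that $\qf$ has $n$th Mahler coefficient $n!$, but the route through Theorem~\ref{calS theorem} is the most direct.
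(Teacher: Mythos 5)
Your proof is correct, but it runs in the opposite direction from the paper's. The paper proves the equivalent identity $\calS(\inttrans{\sumf\psi}) = \inttrans{-\psi}$ by applying the \emph{forward} Mahler formula of Proposition~\ref{calS of mahler} to $\inttrans{\sumf\psi}$: the coefficients $n!\,\phi(-1-n) - n!\,\phi(-n)$ collapse to $-n!\,(\nabla\phi)(-1-n) = -n!\,\psi(-1-n)$ in one line, using only the relation $\nabla\sumf\psi = \psi$ pointwise (plus the observation $(\sumf\psi)(0) = 0$ to handle the $n=0$ term). You instead compute $\calS^{-1}(\inttrans{\psi})$ directly via the inverse formula (\ref{formula for preimage}), which forces you to evaluate $\sumf\psi$ at the negative integers and to prove the closed form $(\sumf\psi)(-1-n) = -\sum_{k=0}^n \psi(-1-k)$ by induction. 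That extra lemma is true and your justification is sound --- in particular you correctly flag that the recurrence $\nabla\sumf\psi = \psi$, imposed a priori only on $\Z_{\geq 0}$, extends to all of $\Zp$ by continuity and density, which is exactly the point needing care. The trade-off: the paper's route is shorter because the forward difference telescopes locally, while yours makes explicit the (mildly interesting) values of the indefinite sum at negative integers; both rest on the same underlying comparison of Mahler coefficients.
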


\begin{proof}
We show that $\calS(\inttrans{\sumf\psi}) = \inttrans{-\psi}$, which is an equivalent assertion. Let $\phi = \sumf\psi$. Then because $\phi(0) = 0$, Proposition~\ref{calS of mahler} shows that
\begin{align*}
\calS(\inttrans{\sumf\psi}) &= \sum_{n = 0}^\infty (n!\,\phi(-1 - n) - n!\,\phi(-n))\beta_n \\
&= \sum_{n = 0}^\infty n!(\phi(-1 - n) - \phi(-n))\beta_n \\
&= \sum_{n = 0}^\infty n!(-(\nabla\phi)(-1 - n))\beta_n \\
&= \sum_{n = 0}^\infty n!(-\psi(-1 - n))\beta_n \\
&= \inttrans{-\psi} .
\end{align*}
\end{proof}

\subsection{Manifesting $\inttmap$ as an integral transform}

We now turn to our claim that the function $m \mapsto \int_{\Zp} (\one - \beta_1)^{\star m}\,d\mu(H_\psi)$ in (\ref{inttrans as integral: initial def}) can be extended to a locally analytic function on $\Zp$, showing at the same time that this extension is simply $\inttrans{\psi}$. We hence derive an integral-transform expression for $p$-adic incomplete $\Gamma$-functions.

Define
\defmap{\spwr}{\Zp}{\cfns{\Zp}{\Cp}}{\trvar}{\sum_{n = 0}^\infty (-1)^n\ch{\trvar}{n}n!\,\beta_n}[,]
where, as always, $\beta_n(x) = \ch{x}{n}$. Note that, for any fixed $s \in \Zp$, the function $\sum_{n = 0}^\infty (-1)^n\ch{\trvar}{n}n!\,\beta_n$ is indeed continuous, because its sequence of Mahler coefficients tends to zero.

Let $\|\cdot\|_M$ denote the norm on $\cfns{\Zp}{\Cp}$ defined by $\|\lfn\|_M = \max_{n \geq 0} |a_n|$, where $(a_n)_{n \geq 0}$ is the sequence of Mahler coefficients of $\lfn$.

\begin{prop} \label{T is uniformly continuous}
The map $\spwr$ is uniformly continuous with respect to $\|\cdot\|_M$.
\end{prop}

\begin{proof}
Let $\trvar_1,\trvar_2 \in \Zp$. Then
\[ \spwr(\trvar_1) - \spwr(\trvar_2) = \sum_{n = 1}^\infty (-1)^n\left(\ch{\trvar_1}{n} - \ch{\trvar_2}{n}\right)n!\,\beta_n ,\]
and for $n \geq 1$,
\beq \label{diff of T: bound on Mahler}
\left|(-1)^n\left(\ch{\trvar_1}{n} - \ch{\trvar_2}{n}\right)n!\right| \leq |\trvar_1 - \trvar_2| \cdot n|n!|
\eeq
by the lemma in \cite[Chap.~5, Sect.~1.5]{robert:p-adic}. (Actually, we have used only a weak form of the lemma, given in the remark that follows its proof.) An analysis of $n|n!|$ via the well-known fact that $v_p(n!) = (n - S_p(n))/(p - 1)$, where $S_p(n)$ is the $p$-adic digit sum of $n$, shows that $n|n!| \to 0$ as $n \to \infty$, so $n|n!|$ has a maximum $M$.

Now, let $\eps > 0$, and let $\delta = \eps/M$. If $|\trvar_1 - \trvar_2| < \delta$, then (\ref{diff of T: bound on Mahler}) shows that every Mahler coefficient of $\spwr(\trvar_1) - \spwr(\trvar_2)$ has absolute value at most $|\trvar_1 - \trvar_2|M < \delta M = \eps$, so $\|\spwr(\trvar_1) - \spwr(\trvar_2)\|_M < \eps$. Thus, $\spwr$ is uniformly continuous.
\end{proof}

\begin{prop} \label{T extends (1 - x)^star}
If $m \in \Z$, then $\spwr(m) = (\one - \beta_1)^{\star m}$.
\end{prop}

\begin{proof}
The continuous function $\one - \beta_1$ corresponds to the power series $1 - \gfvar$ under the bijections in (\ref{cfns decay corr 1}) and (\ref{cfns decay corr 2}), and if $m \geq 0$, then
\[ (1 - \gfvar)^m = \sum_{n = 0}^m \ch{m}{n}(-\gfvar)^n = \sum_{n = 0}^\infty (-1)^n\ch{m}{n}n!\,\frac{\gfvar^n}{n!} ,\]
which corresponds to $\spwr(m)$ under the same correspondence. For $m < 0$, we instead consider $\spwr(-m)$ where $m > 0$. Observe that $(\one - \beta_1)^{\star(-m)}$ corresponds to
\begin{align*}
(1 - \gfvar)^{-m} &= \sum_{n = 0}^\infty (-1)^n\ch{-m}{n}\gfvar^n \eqcomb{see \cite[Chap.~6, Sect.~3.2]{robert:p-adic}, for example} \\
&= \sum_{n = 0}^\infty (-1)^n\ch{-m}{n}n!\,\frac{\gfvar^n}{n!} ,
\end{align*}
and this corresponds to $\spwr(-m)$, as desired.
\end{proof}

If $\trvar \in \Zp$, we define $(\one - \beta_1)^{\star \trvar}$ to be $\spwr(\trvar)$. By Proposition~\ref{T extends (1 - x)^star}, this definition extends $(\one - \beta_1)^{\star \trvar}$ from $\trvar \in \Z$ to $\trvar \in \Zp$.

\begin{theorem} \label{inttrans as integral}
Let $\psi \in \cfns{\Zp}{\Cp}$, and let $\inttrans{\psi}$ be the locally analytic function defined in (\ref{def of inttrans}). Then for all $\trvar \in \Zp$,
\begin{align}
\inttrans{\psi}(\trvar) &= \int_{\Zp} (\one - \beta_1)^{\star\trvar}\,d\mu(H_\psi) \label{inttrans as integral: first equality} \\
&= \int_{\Zp} \Big((\one - \beta_1)^{\star\trvar} \star \psi\Big)\,d\mu(H_\one) . \label{inttrans as integral: second equality}
\end{align}
\end{theorem}

\begin{proof}
Consider the function
\defmap{\mathrm{Int}(\psi)}{\Zp}{\Cp}{\trvar}{\int_{\Zp} (\one - \beta_1)^{\star\trvar}\,d\mu(H_\psi)}[.]
We first show that $\mathrm{Int}(\psi)$ is continuous. Write $H_\psi(t) = \sum_{n = 0}^\infty b_n t^n$, and let $N = \sup_{n \geq 0} |b_n|$. Note that the supremum is finite because $\psi$ is bounded. If $\eps > 0$, then by Proposition~\ref{T is uniformly continuous}, there is $\delta > 0$ such that $\|\spwr(\trvar_1) - \spwr(\trvar_2)\|_M < \eps/N$ whenever $|\trvar_1 - \trvar_2| < \delta$. Take such $\trvar_1,\trvar_2$, and write $\spwr(\trvar_1) - \spwr(\trvar_2) = \sum_{n = 0}^\infty a_n\beta_n$, where $a_n \to 0$. Then
\begin{align*}
\left|\int_{\Zp} \spwr(\trvar_1)\,d\mu(H_\psi) - \int_{\Zp} \spwr(\trvar_2)\,d\mu(H_\psi)\right| &= \left|\int_{\Zp} (\spwr(\trvar_1) - \spwr(\trvar_2))\,d\mu(H_\psi)\right| \\
&= \left|\sum_{n = 0}^\infty a_n b_n\right| \\
&\leq \left(\max_{n \geq 0} |a_n|\right)\left(\sup_{n \geq 0} |b_n|\right) \\
&= \|\spwr(\trvar_1) - \spwr(\trvar_2)\|_M \cdot N \\
&< \eps .
\end{align*}
This proves continuity.

Now, for all $m \in \Z$,
\begin{align*}
\inttrans{\psi}(m) &= \calS^m(\psi)(-1) \eqcom{by Theorem~\ref{power of S and inttrans}} \\
&= ((\one - \beta_1)^{\star m} \star \psi)(-1) \eqcom{by Corollary~\ref{calS corr to 1 - beta_1}} \\
&= \pair{(\one - \beta_1)^{\star m}}{\psi} \eqcom{by Proposition~\ref{star and pairing}} \\
&= \int_{\Zp} (\one - \beta_1)^{\star m}\,d\mu(H_\psi) \eqcom{by definition} \\
&= \mathrm{Int}(\psi)(m) ,
\end{align*}
so because both $\inttrans{\psi}$ and $\mathrm{Int}(\psi)$ are continuous, they are equal as functions on $\Zp$. We thus have (\ref{inttrans as integral: first equality}), and then (\ref{inttrans as integral: second equality}) follows by Proposition~\ref{pairing as integral of product}.
\end{proof}

\begin{remark}
We observe a similarity between the integral transform
\[ \inttrans{\psi}(s) = \int_{\Zp} \Big((\one - \beta_1)^{\star\trvar} \star \psi\Big)\,d\mu(H_\one) \]
and the Laplace transform
\[ \lt{f}(s) = \int_0^\infty e^{-st}f(t)\dx[t] = \int_0^\infty (e^{-t})^s f(t)\dx[t] .\]
The equation $\inttrans{\sumf\psi} = -\calS^{-1}(\inttrans{\psi})$ satisfied by the integral transform $\inttmap$ (see Proposition~\ref{sumf and inttmap}) is akin to the equation $\mathcal{L}\{tf(t)\} = -\mathcal{L}\{f\}'$ satisfied by the Laplace transform. In our situation, the derivative operator is replaced by $\calS^{-1}$, and the multiplication-by-$t$ map is replaced by the $\sumf$ operator. We will also see in Section~\ref{sec: inttmap and conv} an analogue of the Laplace-transform formula $\lt{f \ast g} = \lt{f}\lt{g}$.
\end{remark}

\subsubsection{$\Gamma_p(s,r)$ in terms of the integral transform $\inttmap$} \label{int transf for gamma}

We show that O'Desky and Richman's $p$-adic incomplete $\Gamma$-functions can be expressed in terms of the integral transform $\inttmap$. We recall the functions $\gfn{r} = \calS^{-1}(g_r)$ and $\overline{\gamma}_r$ defined in Section~\ref{sec: p-adic inc}, where $r \in \lball{1}{1}[\Cp]$.

\begin{prop} \label{gfn in terms of an inttrans}
Let $r \in \lball{1}{1}[\Cp]$. Then $\gfn{r} = \shiftu(g_r)\inttrans{g_r}$.
\end{prop}

\begin{proof}
If $\trvar \in \Zp$,
\begin{align*}
\gfn{r}(\trvar) &= \sum_{n = 0}^\infty (\trvar)_n r^{\trvar - n} \eqcom{by Proposition~\ref{alternative for calS^{-1}}} \\
&= r^{\trvar + 1}\sum_{n = 0}^\infty (\trvar)_n r^{-1 - n} \\
&= \shiftu(g_r)(\trvar)\inttrans{g_r}(\trvar) \eqcom{by definition of $\inttmap$.}
\end{align*}
\end{proof}

\begin{prop}
Let $r \in \lball{1}{1}[\Cp]$, and define $\Gamma_p(-,r) = \overline{\gamma}_r$, where $\overline{\gamma}_r$ is as in Section~\ref{sec: p-adic inc}. Then for all $\trvar \in \Zp$,
\[ \Gamma_p(s,r) = \exp(r\tilde{p})r^\trvar\int_{\Zp} \Big((\one - \beta_1)^{\star(\trvar - 1)} \star g_r\Big)\,d\mu(H_\one) .\]
\end{prop}

\begin{proof}
If $\trvar \in \Zp$, then
\begin{align*}
\overline{\gamma}_r(s) &= \exp(r\tilde{p})\gfn{r}(s - 1) \\
&= \exp(r\tilde{p})r^s\inttrans{g_r}(s - 1) \eqcom{by Proposition~\ref{gfn in terms of an inttrans}} \\
&= \exp(r\tilde{p})r^\trvar\int_{\Zp} \Big((\one - \beta_1)^{\star(\trvar - 1)} \star g_r\Big)\,d\mu(H_\one) \eqcom{by Theorem~\ref{inttrans as integral}.}
\end{align*}
\end{proof}

We compare the above formula with the definition of the complex incomplete $\Gamma$-function:
\[ \Gamma(s,r) = \int_r^\infty x^{s - 1}e^{-x}\dx .\]

\subsection{Image of $\inttmap$} \label{sec: image of inttmap}

We determine which functions are in the image $\im(\inttmap)$ of the injective map $\inttmap : \cfns{\Zp}{\Cp} \ra \lan{\Zp}{\Cp}$. The following proposition will help in the task.

\begin{prop} \label{inttrans of limit}
If $(\psi_k)_{k \geq 0}$ is a sequence of functions in $\cfns{\Zp}{\Cp}$ converging uniformly to some $\psi$, then the sequence of functions $\inttrans{\psi_k}$ converges uniformly to $\inttrans{\psi}$.
\end{prop}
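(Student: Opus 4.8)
The plan is to deduce this from the single observation that $\inttmap$ is a norm-decreasing $\Cp$-linear operator, where $\cfns{\Zp}{\Cp}$ is given the sup-norm $\|\cdot\|$. Granting that, if $\psi_k \to \psi$ uniformly then $\|\inttrans{\psi_k} - \inttrans{\psi}\| = \|\inttrans{\psi_k - \psi}\| \leq \|\psi_k - \psi\| \to 0$, which is exactly the assertion.

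To carry this out, I would first record linearity: by the definition in (\ref{def of inttrans}), the $n$th Mahler coefficient of $\inttrans{\psi}$ is $n!\,\psi(-1-n)$, which depends $\Cp$-linearly on $\psi$, so $\inttrans{\psi_k} - \inttrans{\psi} = \inttrans{\psi_k - \psi}$ for each $k$. Then I would bound $\|\inttrans{\chi}\|$ for an arbitrary $\chi \in \cfns{\Zp}{\Cp}$. Since $|n!| \leq 1$, the Mahler coefficients of $\inttrans{\chi}$ satisfy $|n!\,\chi(-1-n)| \leq |\chi(-1-n)| \leq \|\chi\|$ for all $n \geq 0$; since also $|\ch{s}{n}| \leq 1$ for every $s \in \Zp$, the ultrametric inequality applied to the convergent series defining $\inttrans{\chi}(s)$ gives
\[ |\inttrans{\chi}(s)| = \Bigg| \sum_{n = 0}^\infty n!\,\chi(-1-n)\ch{s}{n} \Bigg| \leq \sup_{n \geq 0} \big| n!\,\chi(-1-n) \big| \leq \|\chi\| \]
for all $s \in \Zp$, whence $\|\inttrans{\chi}\| \leq \|\chi\|$. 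Combining with linearity completes the proof.

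There is no genuine obstacle here — the argument uses only the $\Cp$-linearity of $\inttmap$, the bounds $|n!| \leq 1$ and $|\ch{s}{n}| \leq 1$ on $\Zp$, and the non-archimedean triangle inequality. The one point worth stating explicitly is that a $\Cp$-valued series $\sum_n c_n$ with $c_n \to 0$ satisfies $|\sum_n c_n| \leq \sup_n |c_n|$, which is what lets the bound on Mahler coefficients pass through to a bound on the function (and simultaneously guarantees the series defining $\inttrans{\chi}(s)$ converges, uniformly in $s$).
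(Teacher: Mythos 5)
Your proposal is correct and is essentially the paper's own argument: the paper likewise combines the two Mahler series into $\sum_n n!\,(\psi_k-\psi)(-1-n)\beta_n$ (linearity) and bounds its sup-norm by $\|\psi_k-\psi\|$ using $|n!|\leq 1$ and $\|\beta_n\|=1$. Your packaging of this as ``$\inttmap$ is a norm-decreasing linear operator'' is just a cleaner statement of the same estimate.
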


\begin{proof}
We let $\|\cdot\|$ denote the sup-norm for functions $\Zp \ra \Cp$. Let $\eps > 0$, and choose $N \geq 0$ such that $\|\psi_k - \psi\| < \eps$ for all $k \geq N$. Then for the same integers $k$,
\begin{align*}
\|\inttrans{\psi_k} - \inttrans{\psi}\| &= \left\|\sum_{n = 0}^\infty n!\,\psi_k(-1 - n)\beta_n - \sum_{n = 0}^\infty n!\,\psi(-1 - n)\beta_n\right\| \\
&= \left\|\sum_{n = 0}^\infty n!\,(\psi_k - \psi)(-1 - n)\beta_n\right\| \\
&\leq \|\psi_k - \psi\| \eqcom{because $\|\beta_n\| = 1$} \\
&< \eps .
\end{align*}
\end{proof}

Recall that $\beta_1(x) = x$, and that $\shiftu(\psi)(x) = \psi(x + 1)$.

\begin{lemma} \label{inttrans and nabla}
If $\psi \in \cfns{\Zp}{\Cp}$, then $\inttrans{\beta_1\shiftu^{-1}(\psi)} = -\nabla(\inttrans{\psi})$.
\end{lemma}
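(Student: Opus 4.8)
The plan is to compare Mahler expansions directly. Recall that $\inttrans{\psi} = \sum_{n \geq 0} n!\,\psi(-1 - n)\beta_n$, that $\nabla\beta_n = \beta_{n - 1}$ for $n \geq 1$, and that $\nabla\beta_0 = 0$. Since the Mahler coefficients $n!\,\psi(-1 - n)$ converge to zero and $\nabla$ is a bounded operator on $\cfns{\Zp}{\Cp}$, we may apply $\nabla$ term by term to the Mahler series of $\inttrans{\psi}$, which gives
\[ \nabla(\inttrans{\psi}) = \sum_{n \geq 1} n!\,\psi(-1 - n)\beta_{n - 1} = \sum_{n \geq 0} (n + 1)!\,\psi(-2 - n)\beta_n . \]

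On the other side, $\beta_1\shiftu^{-1}(\psi)$ is the function $x \mapsto x\psi(x - 1)$, so its value at $-1 - n$ equals $(-1 - n)\psi(-2 - n)$. Hence
\[ \inttrans{\beta_1\shiftu^{-1}(\psi)} = \sum_{n \geq 0} n!\,(-1 - n)\psi(-2 - n)\beta_n = -\sum_{n \geq 0} (n + 1)!\,\psi(-2 - n)\beta_n , \]
which is exactly $-\nabla(\inttrans{\psi})$, as required.

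Alternatively, one can argue structurally: from $\calS(\psi)(x) = \psi(x) - x\psi(x - 1)$ we read off $\beta_1\shiftu^{-1}(\psi) = \psi - \calS(\psi)$, and then applying the $\Cp$-linear map $\inttmap$ together with Corollary~\ref{inttrans: calS and shiftu} in the case $m = 1$ (which says $\inttrans{\calS(\psi)} = \shiftu(\inttrans{\psi})$) yields $\inttrans{\beta_1\shiftu^{-1}(\psi)} = \inttrans{\psi} - \shiftu(\inttrans{\psi}) = -\nabla(\inttrans{\psi})$.

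There is no serious obstacle here. The only step deserving a word of justification is the term-by-term application of $\nabla$ to the Mahler series of $\inttrans{\psi}$, which is immediate from uniform convergence and the boundedness of $\nabla$ on $\cfns{\Zp}{\Cp}$.
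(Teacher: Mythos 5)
Your proof is correct. Your primary argument — comparing Mahler coefficients directly, using $\nabla\beta_n = \beta_{n-1}$ and the observation that $(\beta_1\shiftu^{-1}(\psi))(-1-n) = (-1-n)\psi(-2-n)$ — is a genuinely different route from the paper's, which is exactly your "alternative" argument: write $\beta_1\shiftu^{-1}(\psi) = \psi - \calS(\psi)$, apply the linearity of $\inttmap$, and invoke Corollary~\ref{inttrans: calS and shiftu} with $m = 1$ to get $\inttrans{\calS(\psi)} = \shiftu(\inttrans{\psi})$. The trade-off is as you would expect: your direct computation is self-contained and elementary, needing only the definition of $\inttmap$ and the term-by-term application of $\nabla$ (which you rightly justify via uniform convergence and the bound $\|\nabla f\| \leq \|f\|$), whereas the paper's route leans on machinery already established (Corollary~\ref{inttrans: calS and shiftu}, which itself rests on Theorem~\ref{power of S and inttrans}) and so is shorter and makes the structural role of $\calS$ visible. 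Both are complete; since you supplied the paper's argument verbatim as your alternative, nothing is missing.
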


\begin{proof}
We begin with the definition of $\calS(\psi)$, namely, $\calS(\psi)(x) = \psi(x) - x\psi(x - 1)$, \ie $\calS(\psi) = \psi - \beta_1\shiftu^{-1}(\psi)$. Then
\begin{align*}
\inttrans{\beta_1\shiftu^{-1}(\psi)} &= \inttrans{\psi - \calS(\psi)} \\
&= \inttrans{\psi} - \inttrans{\calS(\psi)} \\
&= \inttrans{\psi} - \shiftu(\inttrans{\psi}) \eqcom{by Corollary~\ref{inttrans: calS and shiftu}} \\
&= -\nabla(\inttrans{\psi}) .
\end{align*}
\end{proof}

Before giving the next proposition, we observe that the functions $\frac{1}{n!}\nabla^n\qf$ map $\Zp$ into itself. Indeed, $\qf = \sum_{k = 0}^\infty k!\,\beta_k$, so
\[ \frac{1}{n!}\nabla^n\qf = \frac{1}{n!}\sum_{k = 0}^\infty (k + n)!\,\beta_n = \sum_{k = 0}^\infty \frac{(k + n)!}{n!}\,\beta_k ,\]
and $(n + k)!/n! \in \Z$.

\begin{prop} \label{inttrans in terms of mahler coeffs}
If $\psi = \sum_{n = 0}^\infty a_n\beta_n \in \cfns{\Zp}{\Cp}$, then
\[ \inttrans{\psi} = \sum_{n = 0}^\infty a_n\frac{(-1)^n}{n!}\nabla^n\qf .\]
\end{prop}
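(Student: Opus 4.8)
The plan is to first pin down $\inttmap$ on each Mahler basis function $\beta_n$, and then pass to a general $\psi$ by a uniform-convergence argument. As a warm-up I would compute $\inttrans{\one}$ straight from the definition~(\ref{def of inttrans}): since $\one(-1-n)=1$ for all $n$, we get $\inttrans{\one}=\sum_{n=0}^\infty n!\,\beta_n=\qf$, which is precisely the asserted formula in the case $\psi=\beta_0$.

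Next I would prove $\inttrans{\beta_n}=\frac{(-1)^n}{n!}\nabla^n\qf$ by induction on $n$, the base case being the computation just made. The crucial input for the inductive step is the binomial identity $x\ch{x-1}{n}=(n+1)\ch{x}{n+1}$, which in the paper's notation reads $\beta_1\shiftu^{-1}(\beta_n)=(n+1)\beta_{n+1}$ (a routine manipulation of the falling factorial $(x)_n$). Feeding $\psi=\beta_n$ into Lemma~\ref{inttrans and nabla} and using the linearity of $\inttmap$ on finite sums, one obtains
\[ (n+1)\,\inttrans{\beta_{n+1}} = \inttrans{\beta_1\shiftu^{-1}(\beta_n)} = -\nabla\bigl(\inttrans{\beta_n}\bigr) = -\nabla\Bigl(\tfrac{(-1)^n}{n!}\nabla^n\qf\Bigr) = \tfrac{(-1)^{n+1}}{n!}\nabla^{n+1}\qf , \]
and dividing by $n+1$ advances the induction.

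Finally, for a general $\psi=\sum_{n=0}^\infty a_n\beta_n$, I would set $\psi_N=\sum_{n=0}^N a_n\beta_n$. Since $a_n\to 0$, Mahler's theorem gives $\psi_N\to\psi$ uniformly, so Proposition~\ref{inttrans of limit} yields $\inttrans{\psi_N}\to\inttrans{\psi}$ uniformly; by linearity on finite sums, $\inttrans{\psi_N}=\sum_{n=0}^N a_n\frac{(-1)^n}{n!}\nabla^n\qf$. On the other hand, the bound $\bigl\|\tfrac{1}{n!}\nabla^n\qf\bigr\|\leq 1$ recorded just before the statement, together with $a_n\to 0$, shows that $\sum_{n=0}^\infty a_n\frac{(-1)^n}{n!}\nabla^n\qf$ converges uniformly with partial sums exactly $\inttrans{\psi_N}$. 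Comparing the two limits gives the claimed identity.

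I do not expect a genuine obstacle here; the only points needing a little care are the verification of $\beta_1\shiftu^{-1}(\beta_n)=(n+1)\beta_{n+1}$ and the observation that Lemma~\ref{inttrans and nabla} applies verbatim with $\psi=\beta_n$, which it does since $\beta_n$ is continuous.
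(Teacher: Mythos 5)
Your proposal is correct and follows essentially the same route as the paper: the identity $\beta_1\shiftu^{-1}(\beta_n)=(n+1)\beta_{n+1}$ combined with Lemma~\ref{inttrans and nabla} to get $\inttrans{\beta_n}=\frac{(-1)^n}{n!}\nabla^n\qf$ by induction from the base case $\inttrans{\beta_0}=\qf$, followed by Proposition~\ref{inttrans of limit} to pass to a general Mahler series. Your extra remarks (the explicit computation of $\inttrans{\one}$ and the sup-norm bound $\|\tfrac{1}{n!}\nabla^n\qf\|\leq 1$ justifying convergence of the right-hand side) are sound and merely make explicit what the paper leaves implicit.
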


\begin{proof}
The binomial identity $x\ch{x - 1}{n} = (n + 1)\ch{x}{n + 1}$ says that $\beta_1\shiftu^{-1}(\beta_n) = (n + 1)\beta_{n + 1}$, so
\[ \inttrans{\beta_{n + 1}} = -\frac{1}{n + 1}\nabla(\inttrans{\beta_n}) \]
by Lemma~\ref{inttrans and nabla}. Therefore, because $\inttrans{\beta_0} = \qf$ by definition (see (\ref{def of inttrans})), induction on $n$ shows that
\[ \inttrans{\beta_n} = \frac{(-1)^n}{n!}\nabla^n\qf .\]
Hence, Proposition~\ref{inttrans of limit} shows that if $a_n \to 0$, then
\[ \inttrans*{\sum_{n = 0}^\infty a_n\beta_n} = \sum_{n = 0}^\infty a_n\frac{(-1)^n}{n!}\nabla^n\qf .\]
\end{proof}

\begin{cor} \label{desc of image of inttmap: nabla}
The image of the transform map $\inttmap : \cfns{\Zp}{\Cp} \ra \lan{\Zp}{\Cp}$ consists of the functions
\beq \label{general function in image of inttmap}
\Psi = \sum_{n = 0}^\infty a_n\frac{(-1)^n}{n!}\nabla^n\qf
\eeq
where $(a_n)_{n \geq 0}$ is a sequence in $\Cp$ converging to zero. A function $\Psi$ in the image determines the coefficients $a_n$ in (\ref{general function in image of inttmap}) uniquely.
\end{cor}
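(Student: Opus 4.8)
The plan is to read the corollary off directly from Proposition~\ref{inttrans in terms of mahler coeffs}, combined with Mahler's Theorem and the injectivity of $\inttmap$ recorded in the remark preceding Theorem~\ref{power of S and inttrans}. No new computation should be required.

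First I would establish the description of the image. By Mahler's Theorem, an arbitrary $\psi \in \cfns{\Zp}{\Cp}$ has the form $\psi = \sum_{n = 0}^\infty a_n\beta_n$ with $a_n \to 0$, and conversely every such series defines an element of $\cfns{\Zp}{\Cp}$. Proposition~\ref{inttrans in terms of mahler coeffs} then says precisely that $\inttrans{\psi} = \sum_{n = 0}^\infty a_n\tfrac{(-1)^n}{n!}\nabla^n\qf$. Hence, as $\psi$ ranges over $\cfns{\Zp}{\Cp}$ — equivalently, as $(a_n)_{n \geq 0}$ ranges over all sequences in $\Cp$ converging to zero — the function $\inttrans{\psi}$ ranges over exactly the functions displayed in (\ref{general function in image of inttmap}). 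That each such series genuinely converges to a locally analytic function is already contained in Proposition~\ref{inttrans in terms of mahler coeffs} (via Proposition~\ref{inttrans of limit}), so the image is as claimed.

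For the uniqueness of the coefficients, suppose $\Psi = \sum_{n = 0}^\infty a_n\tfrac{(-1)^n}{n!}\nabla^n\qf = \sum_{n = 0}^\infty a_n'\tfrac{(-1)^n}{n!}\nabla^n\qf$ with $a_n \to 0$ and $a_n' \to 0$. Set $\psi = \sum_{n = 0}^\infty a_n\beta_n$ and $\psi' = \sum_{n = 0}^\infty a_n'\beta_n$ in $\cfns{\Zp}{\Cp}$. By Proposition~\ref{inttrans in terms of mahler coeffs}, $\inttrans{\psi} = \Psi = \inttrans{\psi'}$, so $\psi = \psi'$ by the injectivity of $\inttmap$, and then $a_n = a_n'$ for all $n$ by the uniqueness of Mahler coefficients. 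Equivalently: the $a_n$ appearing in (\ref{general function in image of inttmap}) are nothing but the Mahler coefficients of the unique preimage of $\Psi$ under $\inttmap$.

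I do not anticipate any real obstacle; the statement is essentially a repackaging of Proposition~\ref{inttrans in terms of mahler coeffs}. The one point deserving a moment's care is that the uniqueness of the $a_n$ does not follow merely from linear independence of finitely many of the functions $\tfrac{(-1)^n}{n!}\nabla^n\qf$, since one is working with convergent infinite series; this is exactly where the injectivity of $\inttmap$ — equivalently, the triviality of the kernel of $\psi \mapsto \inttrans{\psi}$ on $\cfns{\Zp}{\Cp}$ — must be invoked.
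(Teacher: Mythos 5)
Your proposal is correct and follows essentially the same route as the paper: the image description is read off from Proposition~\ref{inttrans in terms of mahler coeffs} together with Mahler's Theorem, and uniqueness of the coefficients is reduced to the injectivity of $\inttmap$ (the paper phrases this by showing a vanishing combination forces all $a_n = 0$, which is the same argument applied to the difference of your two representations). Your closing remark about why finite linear independence would not suffice is exactly the right point of care.
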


\begin{proof}
The first assertion is clear by Proposition~\ref{inttrans in terms of mahler coeffs}. As for the uniqueness of the coefficients for a given $\Psi$, if $\sum_{n = 0}^\infty a_n\frac{(-1)^n}{n!}\nabla^n\qf = 0$ where $a_n \to 0$, let $\psi = \sum_{n = 0}^\infty a_n\beta_n \in \cfns{\Zp}{\Cp}$. Then
\[ \inttrans{\psi} = \sum_{n = 0}^\infty a_n\frac{(-1)^n}{n!}\nabla^n\qf = 0 ,\]
so $\psi = 0$ by the injectivity of $\inttmap$, and therefore the $a_n$ are all zero.
\end{proof}

\begin{cor} \label{desc of image of inttmap: calS^{-n}}
For all $n \geq 0$, $\tfrac{1}{n!}\nabla^n\qf = \calS^{-n}(\qf)$. Therefore, $\im(\inttmap)$ consists of the functions
\[ \Psi = \sum_{n = 0}^\infty a_n(-1)^n\calS^{-n}(\qf) \]
where $(a_n)_{n \geq 0}$ is a sequence in $\Cp$ converging to zero. Again, the $a_n$ are determined uniquely by $\Psi$.
\end{cor}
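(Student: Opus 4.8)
The plan is to prove the operator identity $\tfrac{1}{n!}\nabla^n\qf = \calS^{-n}(\qf)$ first; the description of $\im(\inttmap)$ and the uniqueness of the coefficients $a_n$ then drop out immediately by substituting this identity into Corollary~\ref{desc of image of inttmap: nabla}.

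For the identity I would argue by induction on $n$, the case $n = 0$ being trivial. Since $\calS$ is an automorphism (Theorem~\ref{calS theorem}) and $\calS^{-n}(\qf) = \tfrac{1}{n!}\nabla^n\qf$ by the inductive hypothesis, the step from $n$ to $n + 1$ amounts to showing $\calS\big(\tfrac{1}{(n+1)!}\nabla^{n+1}\qf\big) = \tfrac{1}{n!}\nabla^n\qf$, that is, $\calS(\nabla^{n+1}\qf) = (n+1)\nabla^n\qf$. To prove this I would first record that, since $\qf = \sum_{j \geq 0} j!\,\beta_j$ and $\nabla\beta_j = \beta_{j-1}$ with $\nabla\beta_0 = 0$, one has $\nabla^m\qf = \sum_{k \geq 0}(k + m)!\,\beta_k$ for every $m \geq 0$; then applying Proposition~\ref{calS of mahler} with $b_k = (k + m + 1)!$ gives
\[ \calS(\nabla^{m+1}\qf) = \sum_{k \geq 0}\big((k + m + 1)! - k(k + m)!\big)\beta_k = \sum_{k \geq 0}(m + 1)(k + m)!\,\beta_k = (m + 1)\nabla^m\qf ,\]
which is the desired identity with $m = n$. (An alternative is to iterate Proposition~\ref{sumf and inttmap}: using the linearity of $\inttmap$ it yields $\inttrans{\sumf^n\one} = (-1)^n\calS^{-n}(\inttrans{\one}) = (-1)^n\calS^{-n}(\qf)$, and since $\sumf\beta_j = \beta_{j+1}$ we have $\sumf^n\one = \beta_n$, so this reads $\inttrans{\beta_n} = (-1)^n\calS^{-n}(\qf)$; comparing with the formula $\inttrans{\beta_n} = \tfrac{(-1)^n}{n!}\nabla^n\qf$ from the proof of Proposition~\ref{inttrans in terms of mahler coeffs} gives the claim.)

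With the identity established, Corollary~\ref{desc of image of inttmap: nabla} says that $\im(\inttmap)$ consists exactly of the functions $\Psi = \sum_{n \geq 0} a_n\tfrac{(-1)^n}{n!}\nabla^n\qf = \sum_{n \geq 0} a_n(-1)^n\calS^{-n}(\qf)$ with $a_n \to 0$, and the series makes sense (converging to a locally analytic function) because each $\calS^{-n}(\qf) = \tfrac{1}{n!}\nabla^n\qf$ maps $\Zp$ into $\Zp$ and so has sup-norm at most $1$. The uniqueness of the $a_n$ attached to a given $\Psi$ is inherited verbatim from the uniqueness clause of Corollary~\ref{desc of image of inttmap: nabla} (equivalently, from the injectivity of $\inttmap$, exactly as in the proof of that corollary). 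I do not expect a real obstacle: the statement is essentially a repackaging of Corollary~\ref{desc of image of inttmap: nabla}, and the only place needing care is the Mahler-coefficient bookkeeping in the displayed computation — in particular matching up the index shift in $\nabla^m\qf = \sum_k(k + m)!\,\beta_k$ with the $b_{-1} = 0$ convention in Proposition~\ref{calS of mahler}.
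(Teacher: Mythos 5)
Your proposal is correct, but your primary argument takes a genuinely different route from the paper's. The paper proves the identity $\calS^{-n}(\qf) = \tfrac{1}{n!}\nabla^n\qf$ exactly as in your parenthetical alternative: it iterates Proposition~\ref{sumf and inttmap} to get $\inttrans{\sumf^n\beta_0} = (-1)^n\calS^{-n}(\inttrans{\beta_0}) = (-1)^n\calS^{-n}(\qf)$, uses $\sumf^n\beta_0 = \beta_n$, and compares with $\inttrans{\beta_n} = \tfrac{(-1)^n}{n!}\nabla^n\qf$ from Proposition~\ref{inttrans in terms of mahler coeffs}. Your main argument instead establishes the identity by induction via the direct Mahler-coefficient computation $\calS(\nabla^{m+1}\qf) = \sum_{k}\bigl((k+m+1)! - k(k+m)!\bigr)\beta_k = (m+1)\nabla^m\qf$, which checks out: the termwise application of $\nabla$ to the uniformly convergent series $\qf = \sum_j j!\,\beta_j$ is legitimate since $\|\nabla f\| \leq \|f\|$, and the paper itself records $\tfrac{1}{n!}\nabla^n\qf = \sum_k \tfrac{(k+n)!}{n!}\beta_k$ just before Proposition~\ref{inttrans in terms of mahler coeffs}. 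Your computation is more self-contained and elementary, needing only Proposition~\ref{calS of mahler}, whereas the paper's route is shorter given the machinery already in place and makes the structural point that the identity is an instance of the intertwining of $\sumf$ with $-\calS^{-1}$ under $\inttmap$. The deduction of the image description and of the uniqueness of the $a_n$ from Corollary~\ref{desc of image of inttmap: nabla} is the same in both.
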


\begin{proof}
We first recall Proposition~\ref{sumf and inttmap}, which, once iterated, says that $\inttrans{\sumf^n\psi} = (-1)^n\calS^{-n}(\inttrans{\psi})$. We apply this in the case where $\psi = \beta_0$. Then because $\beta_n = \sumf^n\beta_0$, we have
\[ \inttrans{\beta_n} = \inttrans{\sumf^n\beta_0} = (-1)^n\calS^{-n}(\inttrans{\beta_0}) = (-1)^n\calS^{-n}(\qf) ,\]
the last equality by Proposition~\ref{inttrans in terms of mahler coeffs}. On the other hand, $\inttrans{\beta_n} = (-1)^n\tfrac{1}{n!}\nabla^n\qf$ by Proposition~\ref{inttrans in terms of mahler coeffs} again. Therefore, $\calS^{-n}(\qf) = \tfrac{1}{n!}\nabla^n\qf$, so the rest of the statement to be proven follows from Corollary~\ref{desc of image of inttmap: nabla}.
\end{proof}

We note in passing that
\[ \calS^{-1}(\tfrac{1}{n!}\nabla^n\qf) = \calS^{-1}(\calS^{-n}(q)) = \calS^{-(n + 1)}(\qf) = \tfrac{1}{(n + 1)!}\nabla^{n + 1}(\qf) .\]

We make another observation regarding the image of $\inttmap$.

\begin{prop}
If $\lconst{\Zp}{\Cp} \subset \lan{\Zp}{\Cp}$ denotes the subspace of locally constant functions, then $\im(\inttmap) \cap \lconst{\Zp}{\Cp} = 0$.
\end{prop}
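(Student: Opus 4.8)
The plan is to combine the periodicity forced by local constancy with the shift property of $\inttmap$ established in Corollary~\ref{inttrans: calS and shiftu}. Suppose $\Psi \in \im(\inttmap) \cap \lconst{\Zp}{\Cp}$, and write $\Psi = \inttrans{\psi}$ for some $\psi \in \cfns{\Zp}{\Cp}$. The goal is to show that $\psi = 0$, whence $\Psi = \inttrans{0} = 0$.

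First I would record the routine observation that a locally constant function on the compact space $\Zp$ is periodic modulo a power of $p$: by compactness there is an integer $N \geq 0$ such that $\Psi$ is constant on each coset $a + p^N\Zp$, and in particular $\Psi(\trvar + p^N) = \Psi(\trvar)$ for all $\trvar \in \Zp$.

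Next, I would apply Corollary~\ref{inttrans: calS and shiftu} with the integer $m = p^N$ to translate this periodicity into a statement about $\psi$: for all $\trvar \in \Zp$,
\[ \inttrans{\calS^{p^N}(\psi)}(\trvar) = \inttrans{\psi}(\trvar + p^N) = \Psi(\trvar + p^N) = \Psi(\trvar) = \inttrans{\psi}(\trvar) . \]
Since $\inttmap$ is $\Cp$-linear, this says $\inttrans{\calS^{p^N}(\psi) - \psi} = 0$, and the injectivity of $\inttmap$ (noted just after Theorem~\ref{power of S and inttrans}) then gives $\calS^{p^N}(\psi) = \psi$. As $p^N$ is a non-zero integer, Corollary~\ref{calS has infinite order} forces $\psi = 0$, completing the argument.

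Every step is a direct appeal to an already-proved result, so I do not expect a genuine obstacle; the only point needing a sentence of justification is the reduction of local constancy to periodicity modulo $p^N$, which follows immediately from compactness of $\Zp$ and the fact that $\Z$ is dense in $\Zp$.
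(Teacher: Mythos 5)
Your proof is correct and follows exactly the paper's own argument: local constancy gives $p$-power periodicity, Corollary~\ref{inttrans: calS and shiftu} converts this into $\calS^{p^N}(\psi) = \psi$ via the injectivity of $\inttmap$, and Corollary~\ref{calS has infinite order} finishes. No differences worth noting.
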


\begin{proof}
Let $\Psi = \inttrans{\psi}$ be in the intersection. Being locally constant, $\Psi$ is $\Z$-periodic of period $m = p^j$ for some $j \geq 0$ (see \cite[Chap.~4, Sect.~3.2]{robert:p-adic}, for example). That is, $\Psi(s + m) = \Psi(s)$ for all $s \in \Zp$. Then
\begin{align*}
\inttrans{\calS^m(\psi)}(s) &= \inttrans{\psi}(s + m) \eqcom{by Corollary~\ref{inttrans: calS and shiftu}} \\
&= \inttrans{\psi}(s) ,
\end{align*}
so $\inttrans{\calS^m(\psi)} = \inttrans{\psi}$. Hence, $\calS^m(\psi) = \psi$ by the injectivity of $\inttmap$, so $\psi = 0$ by Corollary~\ref{calS has infinite order}. (Note that $m = p^j > 0$.) Thus, $\Psi = 0$.
\end{proof}

\subsection{$\inttmap$ and convolution} \label{sec: inttmap and conv}

Recall the convolution product $\star$ on $\cfns{\Zp}{\Cp}$. We define a similar product on $\im(\inttmap)$, again via the algebra $\decayegf$ (see the beginning of Section~\ref{sec: conv of mahl}). Consider the mutually inverse maps
\beal
\im(\inttmap) &\ra& \decayegf \nonumber \\*
\sum_{n = 0}^\infty b_n\calS^{-n}(\qf) &\mapsto& \sum_{n = 0}^\infty b_n\,\frac{\gfvar^n}{n!} \eqcomb{where $b_n \ra 0$ in $\Cp$}[,] \\[2ex]
\decayegf &\ra& \im(\inttmap) \nonumber \\*
\sum_{n = 0}^\infty b_n\,\frac{\gfvar^n}{n!} &\mapsto& \sum_{n = 0}^\infty b_n\calS^{-n}(\qf) .
\eeal
These maps are well defined by Corollary~\ref{desc of image of inttmap: calS^{-n}}. Denote by $\diamond$ the product on $\im(\inttmap)$ inherited from $\decayegf$ via these maps. Concretely,
\[ \left(\sum_{k = 0}^\infty b_k\calS^{-k}(\qf)\right) \diamond \left (\sum_{l = 0}^\infty c_l\calS^{-l}(\qf)\right) = \sum_{n = 0}^\infty \left(\sum_{k = 0}^n \ch{n}{k}b_k c_{n - k}\right)\calS^{-n}(\qf) .\]
Then $(\im(\inttmap),+,\diamond)$ is a commutative $\Cp$-algebra with no zero divisors.

\begin{prop} \label{inttmap conv}
If $\phi,\psi \in \cfns{\Zp}{\Cp}$, then $\inttrans{\phi \star \psi} = \inttrans{\phi} \diamond \inttrans{\psi}$.
\end{prop}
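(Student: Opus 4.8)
The plan is to compute both sides explicitly in terms of Mahler coefficients and observe that they agree. Write $\phi = \sum_{k \geq 0} a_k\beta_k$ and $\psi = \sum_{l \geq 0} b_l\beta_l$, with $a_k \to 0$ and $b_l \to 0$. Everything needed is already available: the defining formula (\ref{binom mahl conv}) for $\star$, the identity $\inttrans{\beta_n} = (-1)^n\calS^{-n}(\qf)$ recorded in the proof of Corollary~\ref{desc of image of inttmap: calS^{-n}}, and Proposition~\ref{inttrans of limit}, which lets one apply $\inttmap$ term-by-term to a uniformly convergent Mahler series.

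First I would treat the left-hand side. By (\ref{binom mahl conv}), $\phi \star \psi = \sum_{n \geq 0} c_n\beta_n$ where $c_n = \sum_{k = 0}^n \ch{n}{k}a_k b_{n - k}$. Applying $\inttmap$ through this series, using $\inttrans{\beta_n} = (-1)^n\calS^{-n}(\qf)$ and Proposition~\ref{inttrans of limit}, gives
\[ \inttrans{\phi \star \psi} = \sum_{n \geq 0} (-1)^n c_n\calS^{-n}(\qf) .\]

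Next I would compute the right-hand side. Again by $\inttrans{\beta_n} = (-1)^n\calS^{-n}(\qf)$ and Proposition~\ref{inttrans of limit}, we have $\inttrans{\phi} = \sum_{k \geq 0} (-1)^k a_k\calS^{-k}(\qf)$ and $\inttrans{\psi} = \sum_{l \geq 0} (-1)^l b_l\calS^{-l}(\qf)$. Substituting these into the explicit formula for $\diamond$ displayed immediately before the proposition yields
\[ \inttrans{\phi} \diamond \inttrans{\psi} = \sum_{n \geq 0}\left(\sum_{k = 0}^n \ch{n}{k}(-1)^k a_k\,(-1)^{n - k}b_{n - k}\right)\calS^{-n}(\qf) = \sum_{n \geq 0}(-1)^n c_n\calS^{-n}(\qf) ,\]
using $(-1)^k(-1)^{n - k} = (-1)^n$. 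This matches the expression found for the left-hand side, which proves the claim. (Conceptually, this just says that the composite $\cfns{\Zp}{\Cp} \xrightarrow{\inttmap} \im(\inttmap) \to \decayegf$, with the second map as defined before the proposition, sends $\phi$ to $G_\phi(-\gfvar)$, where $G_\phi$ is the image of $\phi$ under (\ref{cfns decay corr 1}); since (\ref{cfns decay corr 1}) carries $\star$ to multiplication, $\diamond$ is transported from multiplication on $\decayegf$, and $\gfvar \mapsto -\gfvar$ is an algebra automorphism of $\decayegf$, the identity is forced.)

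There is no serious obstacle. The only point deserving a word of care is the legitimacy of passing $\inttmap$ through an infinite Mahler series, which is exactly the content of Proposition~\ref{inttrans of limit}; the rest is bookkeeping of the sign $(-1)^n = (-1)^k(-1)^{n - k}$.
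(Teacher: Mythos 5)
Your proof is correct and follows essentially the same route as the paper: expand both functions in Mahler series, use $\inttrans{\beta_n} = (-1)^n\calS^{-n}(\qf)$ (equivalently Proposition~\ref{inttrans in terms of mahler coeffs}) to write each side as a series in the $\calS^{-n}(\qf)$, and match coefficients via the sign identity $(-1)^k(-1)^{n-k} = (-1)^n$. The paper's own proof is exactly this computation, so there is nothing to add.
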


\begin{proof}
Let the Mahler coefficients of $\phi$ and $\psi$ be $(b_k)_{k \geq 0}$ and $(c_l)_{l \geq 0}$ respectively. Then
\begin{align*}
\inttrans{\phi \star \psi} &= \inttrans*{\sum_{n = 0}^\infty \left(\sum_{k = 0}^n \ch{n}{k}b_k c_{n - k}\right)\beta_n} \\
&= \sum_{n = 0}^\infty (-1)^n\left(\sum_{k = 0}^n \ch{n}{k}b_k c_{n - k}\right)\calS^{-n}(\qf) \\
&= \sum_{n = 0}^\infty \left(\sum_{k = 0}^n \ch{n}{k}(-1)^k b_k(-1)^{n - k} c_{n - k}\right)\calS^{-n}(\qf) \\
&= \left(\sum_{k = 0}^\infty (-1)^k b_k \calS^{-k}(\qf)\right) \diamond \left(\sum_{l = 0}^\infty (-1)^l c_l \calS^{-l}(\qf)\right) \\
&= \inttrans{\phi} \diamond \inttrans{\psi} .
\end{align*}
\end{proof}

Proposition~\ref{inttmap conv} is akin to the convolution formula for the Laplace transform.

\section{Connection to the equation $F' + F = G$} \label{DE section}

We describe the role of the automorphism $\calS$ in the solution to the well-known differential equation $F' + F = G$, where $G \in \Cp\pwr{\gfvar}$ is a given power series with bounded coefficients. Of course, our intention is not to attempt to contribute anything to the general theory of $p$-adic differential equations, which has a vast literature extending far beyond this equation, but simply to highlight a connection.

It is worth remarking that, trivial though the equation $F' + F = G$ appears to be, it is not at first sight obvious that if $G \in \Zp\pwr{t}$, say, then the equation has a solution $F$ that is also in $\Zp\pwr{t}$ (which it does).

We have already seen that if $G(t) = \sum_{n = 0}^\infty \psi(n)\gfvar^n \in \Cp\pwr{\gfvar}$ has coefficients given by a continuous function $\psi : \Zp \ra \Cp$, then $F' + F = G$ where
\[ F(\gfvar) = \sum_{n = 0}^\infty \calS^{-1}(\widehat{\psi})(-1 - n)\gfvar^n ,\]
$\widehat{\psi}$ being the function $x \mapsto \psi(-1 - x)$. Just replace $\rfn$ by $\widehat{\psi}$ in Proposition~\ref{calS and gen funcs}. A similar fact holds even when the coefficients of $G$ are not assumed to vary continuously with $n$ and are instead assumed only to be bounded. We turn now to this generalization and show how the automorphism $\calS$ still appears.

Let $K$ be a complete subfield of $\Cp$. For a positive real number $M$, we define $\bfact{\Zp}{K}[M]$ to consist of the functions $\psi : \Zp \ra K$ having Mahler series of the form
\[ \psi = \sum_{n = 0}^\infty n!\,a_n\beta_n \eqcom{where $|a_n| \leq M$ for all $n \geq 0$.} \]
We also let
\[ \bfact{\Zp}{K} = \bigcup_{M > 0} \bfact{\Zp}{K}[M] .\]

\begin{prop} \label{calS and shiftu on bfact}
For each $M > 0$, the $\Cp$-linear automorphisms $\calS$ and $\shiftu$ of $\cfns{\Zp}{\Cp}$ restrict to $K$-linear automorphisms of $\bfact{\Zp}{K}[M]$.
\end{prop}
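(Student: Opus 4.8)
The plan is to work entirely at the level of Mahler coefficients, using repeatedly the elementary fact that for integers $0 \le k \le n$ the quotient $n!/k! = (k+1)(k+2)\cdots n$ is a rational integer, so $|n!/k!| \le 1$ in $\Cp$. First, for $\calS$: if $\psi = \sum_{n=0}^\infty n!\,a_n\beta_n$ with $a_n \in K$ and $|a_n| \le M$ for all $n$, then by Proposition~\ref{calS of mahler} the $n$th Mahler coefficient of $\calS(\psi)$ is $n!\,a_n - n\cdot(n-1)!\,a_{n-1} = n!\,(a_n - a_{n-1})$, and $|a_n - a_{n-1}| \le \max(|a_n|,|a_{n-1}|) \le M$; hence $\calS(\psi) \in \bfact{\Zp}{K}[M]$. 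For the opposite inclusion, applying formula~(\ref{formula for preimage}) of Theorem~\ref{calS theorem} to $\psi = \sum_{k=0}^\infty k!\,a_k\beta_k$, the $n$th Mahler coefficient of $\calS^{-1}(\psi)$ is $\sum_{k=0}^n \tfrac{n!}{k!}\cdot k!\,a_k = n!\sum_{k=0}^n a_k$, with $|\sum_{k=0}^n a_k| \le M$ by the ultrametric inequality; so $\calS^{-1}(\psi) \in \bfact{\Zp}{K}[M]$. Since $\calS$ is already a bijection of $\cfns{\Zp}{\Cp}$, these two inclusions give that $\calS$ restricts to a bijection of $\bfact{\Zp}{K}[M]$ onto itself whose inverse is the restriction of $\calS^{-1}$, and this restriction is $K$-linear because $\calS$ is $\Cp$-linear.

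For $\shiftu$ I would proceed the same way. From $\nabla\beta_n = \beta_{n-1}$ (with $\beta_{-1} = 0$) and $\shiftu(\lfn)(x) = \lfn(x) + (\nabla\lfn)(x)$ one gets $\shiftu(\beta_n) = \beta_n + \beta_{n-1}$, and since $\shiftu$ is an isometry it passes through uniformly convergent Mahler series, giving $\shiftu(\sum_n b_n\beta_n) = \sum_n (b_n + b_{n+1})\beta_n$. Thus for $\psi = \sum_n n!\,a_n\beta_n \in \bfact{\Zp}{K}[M]$ the $n$th Mahler coefficient of $\shiftu(\psi)$ is $n!\,a_n + (n+1)!\,a_{n+1} = n!\,(a_n + (n+1)a_{n+1})$, and $|a_n + (n+1)a_{n+1}| \le \max(M,|n+1|\,M) = M$ because $n+1$ is a rational integer; so $\shiftu(\psi) \in \bfact{\Zp}{K}[M]$. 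For the opposite inclusion I would first record $\shiftu^{-1}(\beta_n) = \sum_{j=0}^n (-1)^{n-j}\beta_j$ (verified directly, or via $\shiftu = I + \nabla$ together with the identity $\shiftu^{-1} = \sum_{k \ge 0}(-1)^k\nabla^k$, which terminates on each $\beta_n$), then interchange the order of summation exactly as in the proof of Theorem~\ref{calS theorem} to obtain $\shiftu^{-1}(\sum_n b_n\beta_n) = \sum_j (\sum_{n \ge j}(-1)^{n-j}b_n)\beta_j$. With $b_n = n!\,a_n$, the $j$th Mahler coefficient of $\shiftu^{-1}(\psi)$ then equals $\sum_{n \ge j}(-1)^{n-j}n!\,a_n = j!\sum_{n \ge j}(-1)^{n-j}\tfrac{n!}{j!}a_n$, which is a convergent series in $K$ (the terms tend to $0$ and $K$ is complete) of absolute value $\le M$ because every $\tfrac{n!}{j!}$ is a $p$-adic integer; hence $\shiftu^{-1}(\psi) \in \bfact{\Zp}{K}[M]$, and as for $\calS$ this makes $\shiftu$ a $K$-linear automorphism of $\bfact{\Zp}{K}[M]$.

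I expect the only real friction to be bookkeeping: pinning down the correct Mahler-coefficient formulas for $\calS^{-1}$ and $\shiftu^{-1}$ and checking that the series appearing in them converge, so that the rearrangements of summation are legitimate. These are manipulations of exactly the kind already carried out in the excerpt (compare the proof of Theorem~\ref{calS theorem}), and the one genuinely arithmetic ingredient — the integrality of $n!/k!$ for $0 \le k \le n$ — is what forces every coefficient-level operation here to be norm-non-increasing, which is the crux of the proof.
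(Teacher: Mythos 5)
Your proposal is correct and follows essentially the same route as the paper: compute the Mahler coefficients of $\calS^{\pm 1}(\psi)$ and $\shiftu^{\pm 1}(\psi)$ explicitly and observe that each new coefficient is $n!$ times an ultrametric combination of the $a_k$ with integer weights, hence still bounded by $M$. The only difference is that you supply the derivations of the formulas (\ref{shiftu of fact mahl}) and its inverse, which the paper labels ``easily verified,'' and your derivations and convergence checks are sound.
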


\begin{proof}
Let $\psi = \sum_{n = 0}^\infty n!\,a_n\beta_n$. Let us handle $\calS$ first. By Proposition~\ref{calS of mahler},
\begin{align}
\calS\left(\sum_{n = 0}^\infty n!\,a_n\beta_n\right) &= a_0\beta_0 + \sum_{n = 1}^\infty (n!\,a_n - n(n - 1)!\,a_{n - 1})\beta_n \nonumber \\
&= a_0\beta_0 + \sum_{n = 1}^\infty n!\,(a_n - a_{n - 1})\beta_n \nonumber \\
&= \sum_{n = 0}^\infty n!\,(a_n - a_{n - 1})\beta_n \label{calS of fact mahl}
\end{align}
if we let $a_{-1} = 0$. It is now clear that if $\psi \in \bfact{\Zp}{K}[M]$, then so is $\calS(\psi)$. Similarly, by Theorem~\ref{calS theorem},
\begin{align*}
\calS^{-1}\left(\sum_{n = 0}^\infty n!\,a_n\beta_n\right) &= \sum_{n = 0}^\infty \left(\sum_{k = 0}^n \frac{n!}{k!}\,k!\,a_k\right)\beta_n \\
&= \sum_{n = 0}^\infty n!\left(\sum_{k = 0}^n a_k\right)\beta_n ,
\end{align*}
so again, $\calS^{-1}(\psi) \in \bfact{\Zp}{K}[M]$ if $\psi$ is.

As for $\shiftu$, the easily verified equalities
\begin{align}
\shiftu\left(\sum_{n = 0}^\infty n!\,a_n\beta_n\right) &= \sum_{n = 0}^\infty n!\,\big((n + 1)a_{n + 1} + a_n\big)\beta_n , \label{shiftu of fact mahl} \\
\shiftu^{-1}\left(\sum_{n = 0}^\infty n!\,a_n\beta_n\right) &= \sum_{n = 0}^\infty n!\left(\sum_{k = n}^\infty (-1)^{k - n}\frac{k!}{n!}\,a_k\right)\beta_n \nonumber
\end{align}
show that if $\psi \in \bfact{\Zp}{K}$, then so are $\shiftu(\psi)$ and $\shiftu^{-1}(\psi)$.
\end{proof}

Now define
\defmap{\qdo}{\Cp\pwr{\gfvar}}{\Cp\pwr{\gfvar}}{F}{(1 - \gfvar)(F' + F)}[.]
Also, if $\lfn \in \cfns{\Zp}{\Cp}$ with Mahler series $\lfn = \sum_{n = 0}^\infty n!\,a_n\beta_n$, define
\[ J_\lfn(\gfvar) = \sum_{n = 0}^\infty (a_n - a_{n - 1})\gfvar^n ,\]
where, for convenience, we have set $a_{-1} = 0$.

\begin{prop} \label{J and calS}
If $\lfn \in \cfns{\Zp}{\Cp}$, then $\qdo(J_\lfn) = J_{\shiftu \of \calS(\lfn)}$.
\end{prop}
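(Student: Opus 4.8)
The plan is to reduce the identity to a routine comparison of coefficients of formal power series in $\Cp\pwr{\gfvar}$. No convergence or boundedness considerations enter, since $\qdo$ is defined on all of $\Cp\pwr{\gfvar}$ and $J_\lfn$ is a well-defined formal power series for any $\lfn \in \cfns{\Zp}{\Cp}$.

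First I would fix the Mahler series $\lfn = \sum_{n = 0}^\infty n!\,a_n\beta_n$ (with the convention $a_{-1} = 0$) and abbreviate $c_n = a_n - a_{n - 1}$, so that $J_\lfn(\gfvar) = \sum_{n = 0}^\infty c_n\gfvar^n$ by definition of $J$. The computation (\ref{calS of fact mahl}) in the proof of Proposition~\ref{calS and shiftu on bfact} gives $\calS(\lfn) = \sum_{n = 0}^\infty n!\,c_n\beta_n$, and then (\ref{shiftu of fact mahl}) gives $\shiftu(\calS(\lfn)) = \sum_{n = 0}^\infty n!\,d_n\beta_n$ with $d_n := (n + 1)c_{n + 1} + c_n$. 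Applying the definition of $J$ once more,
\[ J_{\shiftu\of\calS(\lfn)}(\gfvar) = \sum_{n = 0}^\infty (d_n - d_{n - 1})\gfvar^n \qquad (d_{-1} := 0). \]

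Next I would compute the left-hand side directly. Differentiating $J_\lfn$ termwise gives $J_\lfn'(\gfvar) = \sum_{n = 0}^\infty (n + 1)c_{n + 1}\gfvar^n$, hence $J_\lfn'(\gfvar) + J_\lfn(\gfvar) = \sum_{n = 0}^\infty d_n\gfvar^n$. Multiplying by $1 - \gfvar$ shifts and subtracts coefficients:
\[ \qdo(J_\lfn)(\gfvar) = (1 - \gfvar)\sum_{n = 0}^\infty d_n\gfvar^n = \sum_{n = 0}^\infty (d_n - d_{n - 1})\gfvar^n , \]
which is precisely the expression found above for $J_{\shiftu\of\calS(\lfn)}(\gfvar)$. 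This proves the proposition.

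There is no substantive obstacle; the only things to watch are the index-shift bookkeeping and the conventions $a_{-1} = 0$ and $d_{-1} = 0$, which are needed exactly so that the constant terms of the $J$-series come out correctly, together with the reminder that the identity is one of formal power series, so that no hypothesis on the growth of $(a_n)_{n \geq 0}$ is required. One could alternatively organize the argument more structurally, noting that the assignment $\lfn \mapsto J_\lfn$ intertwines $F \mapsto (1 - \gfvar)F$ with backward-differencing of the coefficient sequence and $F \mapsto F' + F$ with $\shiftu$, but the direct computation above is the shortest route.
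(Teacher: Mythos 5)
Your proof is correct and follows essentially the same route as the paper: both compute the coefficient sequences of $\calS(\lfn)$ and $\shiftu\of\calS(\lfn)$ via the formulas (\ref{calS of fact mahl}) and (\ref{shiftu of fact mahl}) and then match coefficients, the only cosmetic difference being that you multiply $J_\lfn' + J_\lfn$ by $1 - \gfvar$ while the paper divides $J_{\shiftu\of\calS(\lfn)}$ by $1 - \gfvar$ via a telescoping sum. Nothing further is needed.
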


\begin{proof}
Let $\lfn = \sum_{n = 0}^\infty n!\,a_n\beta_n$. One sees immediately from (\ref{calS of fact mahl}) and (\ref{shiftu of fact mahl}) that $\shiftu \of \calS(\lfn) = \sum_{n = 0}^\infty n!\,b_n\beta_n$ where
\[ b_n = (n + 1)a_{n + 1} - na_n - a_{n - 1} .\]
Note that, if we define $a_{-2} = 0$, then $b_{-1} = 0$. (We have already taken $a_{-1}$ to be $0$.)

Now,
\begin{align*}
J_\lfn'(\gfvar) + J_\lfn(\gfvar) &= \sum_{n = 0}^\infty (n + 1)(a_{n + 1} - a_n)\gfvar^n + \sum_{n = 0}^\infty (a_n - a_{n - 1})\gfvar^n \\
&= \sum_{n = 0}^\infty ((n + 1)a_{n + 1} - na_n - a_{n - 1})\gfvar^n \\
&= \sum_{n = 0}^\infty b_n\gfvar^n \eqcom{by the above} \\
&= \sum_{n = 0}^\infty \left(\sum_{k = 0}^n (b_k - b_{k - 1})\right)\gfvar^n \eqcomb{$b_{-1} = 0$} \\
&= \frac{1}{1 - \gfvar}\sum_{n = 0}^\infty (b_n - b_{n - 1})\gfvar^n \\
&= \frac{1}{1 - \gfvar}J_{\shiftu \of \calS(\lfn)}(\gfvar) .
\end{align*}
\end{proof}

For a real number $M > 0$, let $K\pwr{\gfvar}_{\leq M}$ denote the set of power series $G(t) = \sum_{n = 0}^\infty b_n\gfvar^n$ with $|b_n| \leq M$ for all $n$, and let $K\pwr{\gfvar}_b = \bigcup_{M > 0} K\pwr{\gfvar}_{\leq M}$, which consists of the power series with bounded coefficients.

Before stating the next proposition, we point out \cite[Theorem~2.1]{gefter-goncharuk:hurwitz}, which considers a related differential equation in the situation where $A = \Zp$, but without using the automorphism $\calS$.

\begin{prop} \label{DE prop}
Let $X$ be equal either to $K\pwr{\gfvar}_{\leq M}$ with $M > 0$, or to $K\pwr{\gfvar}_b$, and let $A = \{x \in K \sat |x| \leq 1\}$. Then
\bea
X &\ra& X \\*
F &\mapsto& F' + F
\eea
is an $A$-module automorphism of $X$. In particular, $F \mapsto F' + F$ is an automorphism of the $A$-module $A\pwr{\gfvar}$. If $G(\gfvar) = \sum_{n = 0}^\infty b_n\gfvar^n \in X$ and $\rfn = \sum_{n = 0}^\infty n!\,b_n\beta_n$, then the unique $F \in X$ such that $F' + F = G$ is $J_\lfn$ where $\lfn = \calS^{-1} \of \shiftu^{-1}(\rfn)$.
\end{prop}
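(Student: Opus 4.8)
The plan is to deduce everything from Proposition~\ref{J and calS}, which already expresses the operator $F \mapsto F' + F$, up to the unit factor $1 - \gfvar$, in terms of $\calS$ and $\shiftu$; the remainder is bookkeeping with Mahler series. First I would check that $F \mapsto F' + F$ maps $X$ into itself: if $F = \sum_n b_n\gfvar^n$ with $|b_n| \leq M$, then the $n$th coefficient of $F' + F$ is $(n + 1)b_{n + 1} + b_n$, of absolute value at most $\max(|b_{n+1}|,|b_n|) \leq M$ since $|n + 1| \leq 1$ in $\Zp$. The map is plainly additive and commutes with scaling by elements of $A = \{x \in K \sat |x| \leq 1\}$, so it is an endomorphism of the $A$-module $X$.

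For injectivity I would argue directly: if $F' + F = 0$, then $(n + 1)b_{n + 1} = -b_n$ for all $n$, so $b_n = (-1)^n b_0/n!$; because $|1/n!| \to \infty$, a power series with these coefficients has bounded coefficients only when $b_0 = 0$, i.e.\ $F = 0$.

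For surjectivity together with the explicit formula, I would start from $G = \sum_n b_n\gfvar^n \in X$, say with $|b_n| \leq M$, put $\rfn = \sum_n n!\,b_n\beta_n \in \bfact{\Zp}{K}[M]$, and set $\lfn = \calS^{-1}\of\shiftu^{-1}(\rfn)$. By Proposition~\ref{calS and shiftu on bfact}, $\lfn$ again lies in $\bfact{\Zp}{K}[M]$; writing $\lfn = \sum_n n!\,a_n\beta_n$ with $|a_n| \leq M$, one gets $F := J_\lfn = \sum_n (a_n - a_{n-1})\gfvar^n \in K\pwr{\gfvar}_{\leq M} \subseteq X$. The one identity worth recording is that, for any $\mu = \sum_n n!\,c_n\beta_n$, the definition of $J$ gives $J_\mu(\gfvar) = (1 - \gfvar)\sum_n c_n\gfvar^n$. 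Since $\shiftu\of\calS(\lfn) = \rfn$, Proposition~\ref{J and calS} then yields
\[ (1 - \gfvar)(F' + F) = \qdo(J_\lfn) = J_{\shiftu\of\calS(\lfn)} = J_\rfn = (1 - \gfvar)\sum_n b_n\gfvar^n = (1 - \gfvar)G , \]
and cancelling the unit $1 - \gfvar$ of $\Cp\pwr{\gfvar}$ gives $F' + F = G$. Thus $F \mapsto F' + F$ is onto; with the injectivity above it is an $A$-module automorphism of $X$, the case $X = A\pwr{\gfvar}$ being $M = 1$. Uniqueness of the preimage then identifies the solution as $F = J_\lfn$ with $\lfn = \calS^{-1}\of\shiftu^{-1}(\rfn)$, as claimed. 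The case $X = K\pwr{\gfvar}_b$ follows by taking the union over $M$, since all the maps in sight are compatible with the inclusions $K\pwr{\gfvar}_{\leq M} \hookrightarrow K\pwr{\gfvar}_{\leq M'}$.

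I do not expect a genuine obstacle here; the only point requiring care is the bookkeeping around the factor $1 - \gfvar$ — namely recording the identity $J_{\sum_n n!\,c_n\beta_n} = (1 - \gfvar)\sum_n c_n\gfvar^n$ and noting that $1 - \gfvar$ is invertible in $\Cp\pwr{\gfvar}$, so that passing from $\qdo(F) = (1 - \gfvar)G$ to $F' + F = G$ is legitimate. Everything else is an immediate consequence of Propositions~\ref{calS and shiftu on bfact} and~\ref{J and calS}.
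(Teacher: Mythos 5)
Your proposal is correct and follows essentially the same route as the paper: reduce to $X = K\pwr{\gfvar}_{\leq M}$, get injectivity from the unboundedness of the coefficients of $\exp(-\gfvar)$ (you just phrase this at the level of the recursion $(n+1)b_{n+1} = -b_n$ rather than naming the solution $c\exp(-\gfvar)$), and get surjectivity plus the explicit formula by combining Propositions~\ref{calS and shiftu on bfact} and~\ref{J and calS} with the observation $J_\rfn = (1-\gfvar)G$ and cancellation of the non-zero-divisor $1-\gfvar$. No gaps.
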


\begin{proof}
Because $K\pwr{\gfvar}_b = \bigcup_{M > 0} K\pwr{\gfvar}_{\leq M}$ and $A\pwr{\gfvar} = K\pwr{\gfvar}_{\leq 1}$, it is enough to prove everything in the case where $X = K\pwr{\gfvar}_{\leq M}$.

For injectivity, suppose that $F \in K\pwr{\gfvar}_{\leq M}$ satisfies $F' + F = 0$. Then $F(\gfvar) = c\exp(-\gfvar)$ for some $c \in K$, but $\exp(-\gfvar)$ does not have bounded coefficients, so $c = 0$.

For surjectivity, let $G \in K\pwr{\gfvar}_{\leq M}$, say $G(\gfvar) = \sum_{n = 0}^\infty b_n\gfvar^n$. Then
\[ (1 - \gfvar)G(\gfvar) = \sum_{n = 0}^\infty (b_n - b_{n - 1})\gfvar^n = J_\rfn(\gfvar) \]
where $\rfn = \sum_{n = 0}^\infty n!\,b_n\beta_n$ (and $b_{-1} = 0$). Note that $\rfn \in \bfact{\Zp}{K}[M]$, because $|b_n| \leq M$ for all $n$. Now let $\lfn = \calS^{-1} \of \shiftu^{-1}(\rfn)$, which is in $\bfact{\Zp}{K}[M]$ by Proposition~\ref{calS and shiftu on bfact}. Then $J_\lfn \in K\pwr{\gfvar}_{\leq M}$, and Proposition~\ref{J and calS} says that
\[ \qdo(J_\lfn) = J_{\shiftu \of \calS(\lfn)} = J_\rfn .\]
Hence, remembering that $J_\rfn(\gfvar) = (1 - \gfvar)G(\gfvar)$, we have $J_\lfn'(\gfvar) + J_\lfn(\gfvar) = G(\gfvar)$.
\end{proof}

\bigskip

\noindent\textbf{Acknowledgements}

\medskip

\noindent The author would like to thank Al Weiss and Neil Dummigan for helpful conversations.


\begin{thebibliography}{10}

\bibitem{amice:interpolation}
Yvette Amice.
\newblock Interpolation {$p$}-adique.
\newblock {\em Bull. Soc. Math. France}, 92:117--180, 1964.

\bibitem{berndt-kim-zaharescu:dioph}
Bruce~C. Berndt, Sun Kim, and Alexandru Zaharescu.
\newblock Diophantine approximation of the exponential function and {S}ondow's
  {C}onjecture.
\newblock {\em Adv. Math.}, 248:1298--1331, 2013.

\bibitem{colmez:fonctions}
Pierre Colmez.
\newblock Fonctions d'une variable {$p$}-adique.
\newblock {\em Ast\'{e}risque}, (330):13--59, 2010.

\bibitem{gefter-goncharuk:hurwitz}
S.~L. Gefter and A.~B. Goncharuk.
\newblock The {H}urwitz product, {$p$}-adic topology on {$\Bbb Z$}, and
  fundamental solution to linear differential equation in the ring {$\Bbb
  Z[[x]]$}.
\newblock {\em J. Math. Sci. (N.Y.)}, 228(6, Problems in mathematical analysis.
  No. 90 (Russian)):633--638, 2018.

\bibitem{halbeisen-hungerbuhler:comb-func}
Lorenz Halbeisen and Norbert Hungerb\"{u}hler.
\newblock Number theoretic aspects of a combinatorial function.
\newblock {\em Notes Number Theory Discrete Math.}, 5(4):138--150, 1999.

\bibitem{lang:cyc}
S.~Lang.
\newblock {\em Cyclotomic fields {I} and {II}}, volume 121 of {\em Graduate
  Texts in Mathematics}.
\newblock Springer-Verlag, New York, second edition, 1990.
\newblock With an appendix by Karl Rubin.

\bibitem{o'desky-richman:incomplete-gamma}
Andrew O'Desky and David Richman.
\newblock Derangements and the $p$-adic incomplete gamma function.
\newblock {\em To appear in Transactions of the AMS}.

\bibitem{pasol-zaharescu:sondow}
Vicen\c{t}iu Pa\c{s}ol and Alexandru Zaharescu.
\newblock Sondow's conjecture, convergents to {$e$}, and {$p$}-adic analytic
  functions.
\newblock {\em Math. Z.}, 292(1-2):499--511, 2019.

\bibitem{robert:p-adic}
A.M. Robert.
\newblock {\em A Course in $p$-adic Analysis}.
\newblock Springer, 2000.

\bibitem{sondow-schalm:partial}
Jonathan Sondow and Kyle Schalm.
\newblock Which partial sums of the {T}aylor series for {$e$} are convergents
  to {$e$}? (and a link to the primes 2, 5, 13, 37, 463). {P}art {II}.
\newblock In {\em Gems in experimental mathematics}, volume 517 of {\em
  Contemp. Math.}, pages 349--363. Amer. Math. Soc., Providence, RI, 2010.

\bibitem{wash:cyc}
Lawrence~C. Washington.
\newblock {\em Introduction to cyclotomic fields}, volume~83 of {\em Graduate
  Texts in Mathematics}.
\newblock Springer-Verlag, New York, second edition, 1997.

\end{thebibliography}
\end{document}